\DeclareMathOperator{\dom}{dom}
\DeclareMathOperator{\rge}{rge}
\DeclareMathOperator{\lh}{lh}
\DeclareMathOperator{\crit}{crit}
\DeclareMathOperator{\ot}{ot}
\DeclareMathOperator{\cf}{cf}
\theoremstyle{plain}
\newtheorem{theorem}{Theorem}
\newtheorem{lemma}{Lemma}[section]
\newtheorem{corollary}[lemma]{Corollary}
\newtheorem{definition}[lemma]{Definition}
\newtheorem{claim}{Claim}[lemma]
\newtheorem{subclaim}{Subclaim}[claim]
\theoremstyle{remark}
\newtheorem{remark}[lemma]{Remark}
\begin{document}

\title[Collapsing the cardinals of $HOD$]{Collapsing the cardinals of $HOD$}

\author[ J. Cummings; Sy D. Friedman and M. Golshani]{ James Cummings; Sy David Friedman and Mohammad Golshani}
%\author[J. Cummings, Sy D. Friedman and M. Golshani.]{James Cummings $^1$, Sy David Friedman $^2$ and Mohammad Golshani $^3$}

\thanks{Cummings was partially supported by
   NSF grant DMS-1101156, and thanks NSF for their support. He also thanks the University of Vienna
   for inviting him as a Guest Professor in January 2013.}

\thanks{Friedman would like to thank the FWF (Austrian Science Fund) for its support
through Project P23316-N13.}

\thanks{Golshani's research was in part supported by a grant from IPM (No. 91030417). He also thanks
 the European Science Foundation for their support through the grant 5317 within the INFTY program.}

\thanks{The authors thank Hugh Woodin for a very helpful discussion of this problem,
     including the suggestion to use supercompact Radin forcing. In an email exchange
     with the first author \cite{Woodin-private-communication} Woodin suggested
     an alternative to using our projected forcing $\mathbb{R}_w^{\rm proj}$, but we have opted to
     retain our original formulation.}

\maketitle

\begin{abstract}
  Assuming that GCH holds and $\kappa$ is $\kappa^{+3}$-supercompact, we construct a generic extension
  $W$ of $V$ in which $\kappa$ remains strongly inaccessible and $(\alpha^+)^{HOD} < \alpha^+$ for every infinite
  cardinal $\alpha < \kappa$. In particular the rank-initial segment $W_\kappa$ is a model of ZFC in which
  $(\alpha^+)^{HOD} < \alpha^+$ for  every infinite cardinal $\alpha$.
\end{abstract}

\section{Introduction}

 The study of {\em covering lemmas} has played a central role in set theory. The original Jensen covering lemma for $L$ \cite{jensen-covering}
 states that if $0^\sharp$ does not exist then every uncountable set of ordinals in $V$ is covered by a set of ordinals in
 $L$ with the same $V$-cardinality. The restriction to uncountable sets is necessary because of the example of Namba forcing,
 which preserves $\aleph_1^V$ but changes the cofinality of $\aleph_2^V$ to $\omega$.  In combination with the ``internal''
 theory of $L$ coming from fine structure \cite{jensen-finestrux}, the Jensen covering lemma is a powerful tool for proving lower bounds
 in consistency strength.

 If $V_0$ is an inner model of $V_1$, then {\em Jensen covering holds between $V_0$ and $V_1$} if
 every uncountable set of ordinals in $V_1$ is covered by a set of ordinals in $V_0$ with the same $V_1$-cardinality. The example of
 Prikry forcing shows that Jensen's covering lemma does not directly generalise to Kunen's model $L[\mu]$.
 If $V= L[\mu]$ and $G$ is Prikry generic over $V$ then cardinals are preserved in $V[G]$,
  the Prikry generic set cannot be covered by any set in $L[\mu]$ of cardinality less than $\kappa$, and
 $0^\dagger$ does not exist in $V[G]$.

 {\em Weak covering holds between $V_0$ and $V_1$} if $(\lambda^+)^{V_0} = (\lambda^+)^{V_1}$ for every singular cardinal
 $\lambda$ in $V_1$. It is not hard to see that Jensen covering implies weak covering, the key point is that
 if $(\lambda^+)^{V_0} < (\lambda^+)^{V_1}$ then $\cf^{V_1}((\lambda^+)^{V_0})< \lambda$. Note that for example
 weak covering holds between $L[\mu]$ and a Prikry extension of it. Jensen and Steel \cite{jensen-steel}
  have shown that if there is no inner model with a Woodin cardinal then weak covering holds over
 the core model for a Woodin cardinal.

 The Jensen-Steel covering result implies that we will require the strength of a Woodin cardinal to obtain any
 failure of weak covering between a model $V$ and a generic extension $V[G]$.  To see this suppose there is no inner model with
 a Woodin cardinal and $\lambda$ is singular in $V[G]$. Then
 by Jensen-Steel $(\lambda^+)^{V[G]} = (\lambda^+)^{K^{V[G]}}$ where $K$ is the core model for one Woodin cardinal.
 The core model $K$ is generically absolute so $K^{V[G]} = K^V \subseteq V$, and hence
 $(\lambda^+)^{V[G]} = (\lambda^+)^{K^{V[G]}} = (\lambda^+)^{K^V} = (\lambda^+)^V$.

  It is possible to force a failure of weak covering between $V$ and some generic extension from a Woodin cardinal,
  using the stationary tower
  forcing \cite{larson}.  Let $\delta$ be Woodin, let $S = \{ \alpha < \aleph_{\omega+1} : \cf(\alpha) = \omega \}$,
  and let $G$ be generic for the stationary tower forcing ${\mathbb P}_{<\delta}$ with $S \in G$. Then $\bigcup S = \aleph_{\omega+1}^V$,
  and if $j : V \rightarrow M \subseteq V[G]$ is the generic embedding associated with $G$ then by a standard fact
  $j[\aleph_{\omega+1}] \in j(S)$. Since $j(S)$ is a set of ordinals this implies that
  $\crit(j) \ge  \aleph_{\omega+1}^V$. Since $V[G] \models {}^{<\delta} M \subseteq M$, the cardinals
  of $V$ agree with those of $V[G]$ up to $\aleph_\omega$.
  Moreover $j[\aleph_{\omega+1}^V] = \aleph_{\omega+1}^V$ and hence $\cf^{M}(\aleph_{\omega+1}^V) = \omega$,
  so that in particular $\aleph_{\omega+1}^V < (\aleph_{\omega+1})^{V[G]}$.  So $\aleph_\omega^V$ is a singular cardinal
  of $V[G]$ whose successor is not computed correctly by $V$, a failure of weak covering.

  This argument is not the last word, because a key point in the theory of covering lemmas is that weak covering should hold over
  a ``reasonably definable'' inner model. Of course the term  ``reasonably definable'' is quite vague, so let us
  stipulate that a reasonably definable model should at least be ordinal definable and have an ordinal definable
  well-ordering. If $M$ is such a model then $M \subseteq HOD$, because every element of $M$ can be defined
  in $V$ as ``the $\alpha^{\rm th}$ element of $M$'' for some ordinal $\alpha$; what is more, HOD itself is such a model.
  It follows that a failure of weak covering over $HOD$ will also be a failure of weak covering over
  every reasonably definable inner model. The main result of this paper produces a model where
  $(\alpha^+)^{HOD} < \alpha^+$ for every infinite cardinal $\alpha$, which is in a certain sense
  the ultimate failure of weak covering.

   The problem of arranging that $(\alpha^+)^{HOD} < \alpha^+$ has a very easy solution for a single regular cardinal $\alpha$.
   If we force with the Levy collapse $Coll(\alpha, \alpha^+)$ then we obtain a generic extension
   $V[G]$ such that $\alpha$ is still regular,  $(\alpha^+)^V < (\alpha^+)^{V[G]}$ and  (by the homogeneity of the collapse)
   $HOD^{V[G]} \subseteq V$. Work of Dobrinen and Friedman \cite{dobrinen-friedman} shows that it is  consistent
   that $(\alpha^+)^{HOD} < \alpha^+$ for every regular $\alpha$, and that this can hold in the presence of very large
   cardinals (for example superstrong cardinals). This is in contrast to a covering result by Steel \cite{steel}
   stating that if there is no inner  model with a Woodin cardinal and $\kappa$ is measurable with some
   normal measure $\mu$, then $\{ \alpha:  (\alpha^+)^K = \alpha^+ \} \in \mu$.

   The stationary tower argument given above does not provide a failure of weak covering over HOD,
    because the stationary tower forcing is very inhomogeneous and there is no
   reason to believe that $HOD^{V[G]} \subseteq V$. In fact the only arguments known to the authors
   for obtaining a failure of weak covering over $HOD$ involve some use of supercompactness:
   we will outline one such argument below in Section \ref{preliminaries}.

   To prove our main result we will start with a suitable large cardinal $\kappa$, which will still be inaccessible
   in our final model.  We will  first build generic extensions $V_0^*$ and
   $V^*$ with $V \subseteq V_0^* \subseteq V^*$, such that $(\alpha^+)^V = (\alpha^+)^{V_0^*} < (\alpha^+)^{V^*}$
   for almost all (in the sense of the club filter) infinite $V^*$-cardinals $\alpha < \kappa$.
   We will also arrange that $HOD^{V^*} \subseteq V_0^*$,   so that in $V^*$ we have $(\alpha^+)^{HOD} < \alpha^+$
   for almost all infinite cardinals $\alpha < \kappa$.
   The model $V^*$ will be obtained by supercompact Radin forcing, and $V_0^*$ will be a submodel generated
   by a projected version of supercompact Radin forcing; then $V^*$ is a generic extension of $V_0^*$ by
   a quotient forcing, and the  projected forcing is constructed so that we have enough homogeneity for the quotient forcing
   to argue that $HOD^{V^*} \subseteq V_0^*$. To finish we will force over $V^*$ with a sufficiently homogeneous
   iteration of Levy collapsing posets,
   and produce a model $W$ such that $HOD^W \subseteq HOD^{V^*}$ and $(\alpha^+)^V = (\alpha^+)^{V_0^*} < (\alpha^+)^W$ for every
   $W$-cardinal $\alpha < \kappa$. Since $\kappa$ is inaccessible in $W$, the initial segment $W_\kappa$ will
   be a model of ZFC in which $(\alpha^+)^{HOD} < \alpha^+$ for  every infinite cardinal $\alpha$.

   We do not know the exact consistency strength of the assertion  ``$(\alpha^+)^{HOD} < \alpha^+$ for  every infinite cardinal $\alpha$''.
   Since it implies that weak covering fails over every reasonably definable inner model at every singular cardinal,
   it is certainly very strong, and presumably it implies the consistency of many Woodin cardinals and of strong forms
   of determinacy.  Our work in this paper gives the upper bound ``GCH and $\kappa$ is $\kappa^{+3}$-supercompact'',
   which seems quite reasonable since in the current state of knowledge we need  ``GCH and $\kappa$ is $\kappa^+$-supercompact'' to
   get the consistency of ``$(\alpha^+)^{HOD} < \alpha^+$ for some singular cardinal $\alpha$''.

   Our notation is quite standard. We will write the pointwise image of a function $f$ on
   a set $A \subseteq \dom(f)$ as $f[A]$ rather than $f`` A$.  If $\mathbb P$ is a forcing poset
   with $p \in {\mathbb P}$ then we write ${\mathbb P} \downarrow p$ for $\{ q \in {\mathbb P}: q \le p \}$.
   If $s$ and $t$ are sequences then $s^\frown t$ is the concatenation of $s$ and $t$.

\section{Preliminaries on supercompact Prikry forcing and Radin forcing} \label{preliminaries}

   To motivate our use of supercompact Radin forcing, in this section we give a discussion of supercompact Prikry forcing
   and the original (non-supercompact) form of  Radin forcing.
   Supercompact Prikry forcing will give us a way of arranging that $(\kappa^+)^{HOD} < \kappa^+$
   for a single singular cardinal $\kappa$.

   Recall that Prikry forcing is defined from a normal measure $U_0$ on a measurable cardinal $\kappa$. It  is a
   $\kappa^+$-cc~forcing poset which adds an $\omega$-sequence cofinal in $\kappa$ without adding bounded subsets
   of $\kappa$. Supercompact Prikry forcing and Radin forcing represent generalisations of Prikry forcing
   in different directions: supercompact Prikry forcing adds an increasing $\omega$-sequence of elements of
   $P_\kappa \lambda$ whose union is $\lambda$ for some $\lambda \ge \kappa$, while Radin forcing adds closed unbounded subsets of $\kappa$
   of order types greater than or equal to $\omega$.

   The supercompact Radin forcing which we define in Section \ref{supercompactradin} is a common generalisation of
    supercompact   Prikry forcing and Radin forcing. Proofs of the various assertions we make about
    supercompact   Prikry forcing and Radin forcing can mostly be found (in the more general setting of supercompact Radin forcing)
    in that section.

\subsection{Supercompact Prikry forcing} \label{supercompactprikry}

   Supercompact Prikry forcing was introduced by Magidor \cite{magidor} in his work on the Singular Cardinals Hypothesis.

   Let $\kappa$ be $\lambda$-supercompact for some regular cardinal $\lambda \ge \kappa$, and let $U$ be a supercompactness measure
   on $P_\kappa \lambda$. It will be convenient to identify a subset on which $U$ concentrates:

\begin{definition}  Let $A(\kappa, \lambda)$  be the set of
 those $x \in P_\kappa \lambda$ such that $x \cap \kappa$ is an inaccessible cardinal and $\ot(x)$ is a regular
   cardinal with $\ot(x) \ge  x \cap \kappa$.
\end{definition}

   We claim that $U$ concentrates on $A(\kappa, \lambda)$. The argument will be the prototype for several later arguments,
   and will use the following standard fact:
   if $j: V \rightarrow M = Ult(V, U)$ is the ultrapower map associated with $U$ and $B \subseteq P_\kappa \lambda$, then
   $B \in U$ if and only $j[\lambda] \in j(B)$. Since $j[\lambda] \cap j(\kappa) = \kappa$ and
   $\ot(j[\lambda]) = \lambda$, it is clear that $j[\lambda] \in j( A(\kappa, \lambda) )$.

   Let $A = A(\kappa, \lambda)$ and define $\kappa_x = x \cap \kappa$, $\lambda_x = \ot(x)$ for $x \in A$; the functions
   $x \mapsto \kappa_x$ and $x \mapsto \lambda_x$ represent $\kappa$ and $\lambda$ respectively in the ultrapower
   of $V$ by $U$, and it is often useful to view $\kappa_x$ and $\lambda_x$ as ``local'' versions of $\kappa$ and $\lambda$.
   We equip $A$ with a strict partial ordering $\prec$ by defining $x \prec y$ if and only if $x \subseteq y$ and $\lambda_x < \kappa_y$.

\begin{remark} If $x \in A(\kappa, \lambda)$ then $\kappa_x = \min(ON \setminus x)$ and $\lambda_x = \ot(x)$,
  so that both $\kappa_x$ and $\lambda_x$ can be computed without knowing the values of $\kappa$ and $\lambda$.
\end{remark}

\begin{definition}
   The {\em supercompact Prikry forcing} ${\mathbb P}_U$ defined from $U$ has conditions $(s, B)$ where $B \in U$ with $B \subseteq A$,
   and $s$ is a finite $\prec$-increasing sequence drawn from $A$. The sequence $s$ is the {\em stem} of the condition,
   and the measure one set $B$ is the {\em upper part}.  The ordering is just like Prikry forcing,
   that is to say the condition $(s, B)$ is extended by prolonging $s$ using elements from $B$ and shrinking $B$:
   a {\em direct extension} of $(s, B)$ is an extension of the form $(s, C)$ for some $C \subseteq B$.
\end{definition}

\begin{remark}
   By a classical theorem of Solovay we have $\lambda^{<\kappa} = \lambda$, so that there are only
   $\lambda$ possible stems.
 If $\lambda = \kappa$ then $A(\kappa, \lambda) = \kappa$, and
   ${\mathbb P}_U$ is  just the standard Prikry forcing defined from a normal measure on $\kappa$.
\end{remark}

   The generic object for the forcing poset ${\mathbb P}_U$  is a $\prec$-increasing $\omega$-sequence
   $\langle x_i :i < \omega \rangle$ such that  $\bigcup_i x_i = \lambda$. The poset satisfies the
  {\em Prikry lemma}, that is to say that any question about the generic extension can be decided
   by a direct extension. This implies that ${\mathbb P}_U$ adds no bounded subsets of $\kappa$, in particular
   $\kappa$ remains a cardinal. By contrast with the standard Prikry forcing
   at $\kappa$, which is $\kappa^+$-cc and preserves all cardinals, forcing with ${\mathbb P}_U$ will collapse $\lambda$
   to have cardinality $\kappa$. Since any two conditions with the same stem are compatible,
   and there are only $\lambda$ stems, we see that ${\mathbb P}_U$ is $\lambda^+$-c.c.~and so it is only
   cardinals $\mu$ with $\kappa < \mu \le \lambda$ that are collapsed.

   The main point in the proof of the Prikry lemma (see Lemma \ref{sccprikrylemma})  is that $U$ enjoys a form of normality.
   If $s$ is a stem and $y \in A$ then we write ``$s \prec y$'' for the assertion that either $s$ is empty or the
   last entry in $s$ is below $y$ in the $\prec$ ordering: normality states that  if $I$ is a set of stems and
    $\langle B_s : s \in I \rangle$  is such  that $B_s \in U$ for all $x$, and we define
   the diagonal intersection $\Delta_s B_s = \{ y \in A : \forall s \prec y \; y \in B_s \}$, then
   $\Delta_s B_s \in U$.

   We can use supercompact Prikry forcing to obtain a model where $(\alpha^+)^{HOD} < \alpha^+$ for a single
   singular cardinal $\alpha$. Let us assume for simplicity that $\lambda = \kappa^+$.
   Let  $\langle x_i :i < \omega \rangle$ be a ${\mathbb P}_U$-generic sequence and let $U_0$ be the projection of
   $U$ to a measure on $\kappa$ via the map $x \mapsto \kappa_x$. Using the well-known criterion for Prikry genericity,
   one can show that $\langle \kappa_{x_i} : i < \omega \rangle$  is generic for ${\mathbb P}_{U_0}$,
    the Prikry forcing at $\kappa$ defined from the measure $U_0$.

  So starting with $G$ which is ${\mathbb P}_U$-generic, we  obtain a chain of models $V \subseteq V[G_0] \subseteq V[G]$
   where $G_0$ is ${\mathbb P}_{U_0}$-generic, and $(\kappa^+)^V = (\kappa^+)^{V[G_0]} < (\kappa^+)^{V[G]}$. Once we have shown that
   $HOD^{V[G]} \subseteq V[G_0]$, it will follow that in the model $V[G]$ weak covering fails for HOD at $\kappa$.
  This can be proved (see Lemma \ref{permutationfact}) using permutations of $\kappa^+$ that fix all points below $\kappa$;
  the key idea is (roughly speaking) that such permutations induce many automorphisms of ${\mathbb P}_U$ which
   commute with the operation of projecting a ${\mathbb P}_U$-generic filter down to a ${\mathbb P}_{U_0}$-generic filter.

\subsection{Radin forcing} \label{vanillaradin}

  Radin forcing \cite{radin} is a generalisation of Prikry forcing in which closed unbounded sets of order types greater than or equal
  to $\omega$ are added to a large cardinal $\kappa$. We will outline the basic theory of Radin forcing.

\begin{definition}  A {\em $\kappa$-measure sequence} is a sequence $w$ such that
   $w(0) = \kappa$,  and $w(\alpha)$ is a $\kappa$-complete measure on $V_\kappa$ for
   $0 < \alpha < \lh(w)$.
\end{definition}

\begin{definition}
  Given an elementary embedding $j: V \rightarrow M$ with ${\rm crit}(j) = \kappa$, we derive
  a  $\kappa$-measure sequence $u_j$ by setting $u_j(0) = \kappa$ and then
   $u_j(\alpha) = \{ X \subseteq V_\kappa: u_j \restriction \alpha \in j(X) \}$
  for $\alpha > 0$, continuing for as long as $\alpha < j(\kappa)$ and  $u_j \restriction \alpha \in M$.
\end{definition}

  Each $u_j(\alpha)$ for $\alpha > 0$ is
  a $\kappa$-complete non-principal ultrafilter on $V_\kappa$, concentrating on objects which
  ``resemble'' $u_j \restriction \alpha$. Note that $u_j(1)$ is essentially the usual normal measure
  on $\kappa$ derived from $j$; it concentrates on sequences of length one rather than ordinals because
  it is generated by the sequence $u_j \restriction 1 = \langle \kappa \rangle$ rather than the
  ordinal $\kappa$.
  In this informal discussion, we will sometimes ignore the distinction between the ordinal $\alpha$ and
  the sequence $\langle \alpha \rangle$.

\begin{definition}
 If $w$ is an initial segment of $u_j$ then
  we say that $j$ is a {\em constructing embedding} for $w$.
\end{definition}

\begin{definition}
We define a class $\mathcal{U}_{\infty}$ of {\em good measure sequences} as follows:
\begin{itemize}
\item $\mathcal{U}_{0}$  is the class of $w$ such that for some $\kappa$,
   $w$ is a $\kappa$-measure
   sequence which has a constructing embedding.
\item  $\mathcal{U}_{n+1}=\{ w \in \mathcal{U}_{n}: \mbox{for all nonzero $\alpha < \lh(w)$, $w(\alpha)$ concentrates on $\mathcal{U}_{n}$} \}$.
\item $\mathcal{U}_{\infty}=\bigcap_{n<\omega}\mathcal{U}_{n}$.
\end{itemize}
\end{definition}

  Note that by countable completeness, if $w$ is a good measure sequence then every measure in $w$
  concentrates on good measure sequences.
  By choosing $j$ to witness some modest degree of strength for $\kappa$, we may arrange that
  long initial segments of $u_j$ are good measure sequences. The key point here is that if $j: V \rightarrow M$ then
  there are extenders in $M$ which approximate $j$ sufficiently to serve as constructing embeddings (in $M$) for
  initial segments of $j$.

  Given a measure sequence $u$ with $\lh(u) > 1$  we define the Radin forcing ${\mathbb R}_u$ as follows.
\begin{definition}
 Conditions in ${\mathbb R}_u$  are
  sequences $p = \langle (u^i, A^i) : i \le n \rangle$ where each $u^i$ is a good measure sequence in $V_{u^{i+1}(0)}$, $u^n = u$,
  $u^i(0)$ increases with $i$, and either $\lh(u^i) = 1$ and $A^i = \emptyset$ or $\lh(u^i) > 1$ and
  $A^i \subseteq V_{u^i(0)}$ with  $A^i$ a set of good measure sequences which is measure one for every measure in $u^i$. A condition is extended
  by shrinking some of the sets $A^i$, and  interpolating new pairs $(w, B)$ such that $w \in A^i$ and $B \subseteq A^i \cap V_{w(0)}$
  between $(u^{i-1}, A^{i-1})$ and $(u^i, A^i)$ for some $i$.
\end{definition}

 The generic object can be viewed as a sequence of good measure sequences
  $\langle v_\alpha : \alpha < \mu \rangle$ where $\langle v_\alpha(0) : \alpha < \mu \rangle$ increases continuously with $\alpha$
 and is cofinal in $u(0)$: the condition $p$ above carries the information that each $v_\alpha$
  either appears among the $u^i$ or is the first entry in a pair which can legally be added to $p$.
 The uniform definition of the extension relation readily implies that if $\lh(v_\alpha) > 1$ then the
 sequence $\langle v_\beta : \beta < \alpha \rangle$ is  generic over $V$ for ${\mathbb R}_{v_\alpha}$.

  It is easy to see that if $p$ is a condition as above and $i < n$ with $\lh(u^i) > 1$,
  then ${\mathbb R}_u \downarrow p$ is isomorphic to
  a product of the form $R_{u^i} \downarrow q \times {\mathbb R}_u \downarrow r$ for suitable conditions
  $q \in {\mathbb R}_{u^i}$ and $r \in {\mathbb R}_u$. The forcing ${\mathbb R}_u$ is also $u(0)^+$-cc and satisfies a
  version of the Prikry lemma, stating as usual that any question about the generic extension can be decided just
  by shrinking measure one sets. It follows from these facts that forcing with ${\mathbb R}_u$ preserves all cardinals.

  As motivation, we outline a version of Radin forcing
  intended to add a cofinal and continuous sequence of type $\omega^2$ in a large cardinal $\kappa$.
  Consider the Radin forcing defined from a good measure sequence $u$ of length three, so
  $u$  has two measures $u(1)$ and $u(2)$. If we force below the  condition
  $(u, A)$ where $A$ consists of good measure sequences in $V_{u(0)}$ of length one or two,
  then we will obtain a generic sequence $\langle v_\alpha: \alpha < \omega^2 \rangle$.
  where $v_\alpha$ has length one (so is morally just an ordinal) for successor $\alpha$
  but has length two for limit $\alpha$.

  It can be shown that:
\begin{itemize}
\item For each $m < \omega$, the $\omega$-sequence $\langle v_{\omega \cdot m + n}(0) : n < \omega \rangle$ is Prikry generic for the
  Prikry forcing defined from $v_{\omega \cdot (m+1)}(1)$.
\item The $\omega$-sequence $\langle v_{\omega \cdot m} : m < \omega \rangle$ is generic for the version of Prikry forcing
  defined from the measure $u(2)$  (stems are finite with each entry a measure sequence of length two and critical points increasing,
   upper parts are $u(2)$-large sets).
\item  For any sequence $\langle \beta_i : i < \omega \rangle$ which consists of successor ordinals
   and is cofinal in $\omega^2$, the $\omega$-sequence $\langle v_{\beta_i}(0) : i < \omega \rangle$ is
   Prikry generic for the Prikry forcing defined from $u(1)$.
\end{itemize}

\section{Supercompact Radin forcing} \label{supercompactradin}

  Supercompact Radin forcing was introduced by Foreman and Woodin \cite{foreman-woodin} in their consistency proof
  for ``GCH fails everywhere''. In particular they proved that supercompact Radin forcing satisfies a version of the
  Prikry lemma, and can preserve large cardinals.  The main forcing of \cite{foreman-woodin}
   is rather complicated as it aims to interleave generic objects
  for Cohen posets along the generic sequence, and  conditions must contain machinery for constraining these generic objects.
  Other accounts of supercompact Radin forcing appear in the literature, for example Krueger \cite{krueger} has described a version
  constructed from a coherent sequence of supercompactness measures.

  In this section we define a version of supercompact Radin forcing. To make the paper self-contained,
  we will  prove all the properties of this forcing  which we will use. To motivate some technical aspects
   of the general definition, we will first define
  a special case of the forcing which adds a continuous and $\prec$-increasing $\omega^2$-sequence
  in $P_\kappa \lambda$.

\begin{definition} \label{sccradinuj}
   Let $\kappa \le \lambda \le \mu$, where $\lambda$ and $\mu$ are regular and $\kappa$ is $\mu$-supercompact.
 Let $j: V \rightarrow M$ witness $\mu$-supercompactness of $\kappa$,  and
  then define a sequence $u_j$ by the recursion $u_j(0) = j[\lambda]$,
  $u_j(\alpha) = \{ X : u_j \restriction \alpha \in j(X)  \}$ for as long as $\alpha < j(\kappa)$ and
   $u_j \restriction \alpha \in M$.
\end{definition}

\begin{remark}  If $\lambda = \kappa$ then we are just defining the kind of measure sequences
  constructed in Section \ref{vanillaradin}.
\end{remark}

  Recall from Section \ref{supercompactprikry} that
  we defined $A(\kappa, \lambda)$ as the set of $x \in P_\kappa \lambda$ such that
  $x \cap \kappa$ is an inaccessible cardinal and $\ot(x)$ is a regular cardinal
  with $\ot(x) \ge x \cap \kappa$.  We also defined $\kappa_x = x \cap \kappa$ and
  $\lambda_x = \ot(x)$ for such $x$.

\begin{definition} \label{skappalambda}
   Let $S(\kappa, \lambda)$ be the set of non-empty sequences $w$
   such that  $\lh(w) < \kappa$, $w(0) \in A(\kappa, \lambda)$ and $w(\alpha) \in V_\kappa$ for all $\alpha$ with $0 < \alpha< \lh(w)$.
\end{definition}

   It is easy to see that for every $\alpha < j(\kappa)$ such that the measure $u_j(\alpha)$ is defined,
   it concentrates on $S(\kappa, \lambda)$. When $\lh(w) = 1$, we will sometimes be careless
   about the distinction between the sequence $w = \langle w(0) \rangle$ and the set $w(0)$.

\begin{remark} We can refine the definition of the set $S(\kappa, \lambda)$ to find a smaller subset on which
   the measures on $u_j$ will concentrate, reflecting more of the properties of
   initial segments of $u_j$.  We do this in generality in Definition \ref{sccgoodmeasuresequence} below,
   for the purposes of the following example we just note that   $u_j(1)$ is (essentially) the $\lambda$-supercompactness
   measure on $P_\kappa \lambda$ derived from $j$ in the standard way, and so $u_j(2)$ concentrates on pairs
    $(x, w)$ where $x \in P_\kappa \lambda$ and $w$ is (essentially) a  supercompactness measure on $P_{\kappa_x} \lambda_x$.
\end{remark}

  Suppose that $u_j(1)$ and $u_j(2)$ are defined and let $u = u_j \restriction 3 = (j[\lambda], u_j(1), u_j(2))$.
  Conditions in the supercompact Radin  forcing to add an $\omega^2$-sequence in $P_\kappa \lambda$
  will be finite sequences $\langle (u^i, A^i) : 0 \le i \le n \rangle$
  where
\begin{itemize}
\item $u^i \in S(\kappa, \lambda)$ for $i < n$, and $\langle x^i : i < n \rangle$ is $\prec$-increasing
   where $x^i = u^i(0)$.
\item For $i < n$, either  $\lh(u^i) = 1$ and $A^i = \emptyset$, or $\lh(u^i) = 2$ and $u^i(1)$
   is a supercompactness measure on $A(\kappa_{x^i}, \lambda_{x^i})$ with $A^i \in u^i(1)$.
\item $u^n = u$, $A^n \subseteq S(\kappa, \lambda)$, $A^n \in u^n(1) \cap u^n(2)$ and
 it consists of sequences of length at most $2$.
\end{itemize}

   The ordering is basically as in the Radin forcing described in Section \ref{vanillaradin}, with one complication.
   If $i < n$ and $\lh(u^i) = 2$, then the elements of $A^i$ are in $A(\kappa_{x^i}, \lambda_{x^i})$
   but the objects we would like to interpolate between $(u^{i-1}, A^{i-1})$ and $(u^i, A^i)$ are
   elements $y \in A(\kappa, \lambda)$ such that $x^{i-1} \prec y \prec x^i$. The solution will be
   to use the order isomorphism $\pi: x^i \simeq \lambda_{x^i}$, so that $y$ can be legally be interpolated
   if $x^{i-1} \prec y \prec x^i$ and $\pi[y] \in A^i$.

  This gives some insight into the way that supercompact Radin forcing will work in general. An entry $w$
  on the generic sequence will have $x = w(0) \in A(\kappa, \lambda)$, and the remainder of $w$ will consist of
  measures on $S(\kappa_x, \lambda_x)$. The measures appearing on $w$ define a ``local'' supercompact Radin forcing
  for $P_{\kappa_x} \lambda_x$, and the role of $x$ will be to integrate the generic sequence for this ``local''
  forcing into the ``global'' sequence. On a related point,  in the definition of the supercompact
  Radin forcing from a  sequence $u$ the value of  $u(0)$ is actually irrelevant.

\subsection{Supercompact measure sequences}

  Before defining supercompact Radin forcing, we need to define the ``good measure sequences'' which will form the
  building blocks for this forcing. The reader should note that the terms
  {\em constructing embedding}, {\em (good) measure sequence},  will be used
  in a more general sense than in Section \ref{vanillaradin}.

\begin{definition}  \label{kappalambdasequence}
 A sequence  $u$ is a {\em $(\kappa,\lambda)$-measure sequence} if $u(0)$ is a set of ordinals
 with $\kappa = \min(ON \setminus u(0))$ and $\lambda = \ot( u(0) )$,   and
 $u(\alpha)$ is a $\kappa$-complete measure on $S(\kappa, \lambda)$ for all $\alpha$ with $0 < \alpha < \lh(u)$.
\end{definition}

  The sequence $u_j$ constructed in definition \ref{sccradinuj}  is an example of such a sequence.

\begin{definition} If $u$ is a $(\kappa, \lambda)$-measure sequence and $A \subseteq S(\kappa, \lambda)$, then
   $A$ is {\em $u$-large} if and only if $A \in u(\beta)$ for all $\beta$ with $0 < \beta < \lh(u)$.
\end{definition}

\begin{definition} \label{sccconstructingembedding}
  Given a $(\kappa, \lambda)$-measure sequence $u$,
 {\em $j$ is a constructing embedding for $u$} if and only if  $j$ witnesses that $\kappa$ is
  $\lambda$-supercompact, and
  for all $\alpha$ with $0 < \alpha < \lh(u)$ we have that $u_j(\alpha)$ is
  defined with $u_j(\alpha) = u(\alpha)$.
\end{definition}

 Note that possibly  $u(0) \neq u_j(0)$ in the last definition, which may seem surprising.
 We will discuss this point further after Lemma \ref{gettingulemma}. The basic issue is that
 if $j : V \rightarrow M$ witness $\mu$-supercompactness for some large $\mu > \lambda$,
 and we build a $(\kappa, \lambda)$-measure sequence $u_j$ then we should like
 initial segments of $u_j$ to have a constructing embedding {\em in $M$}; this will be true
 with our definition.

\begin{definition} \label{sccgoodmeasuresequence}

We define a class $\mathcal{U}^{\rm sup}_{\infty}$ of {\em good measure sequences} as follows:
\begin{itemize}

\item $\mathcal{U}^{\rm sup}_{0}$  is the class of sequences $u$
  such that for some regular cardinals $\kappa$ and $\lambda$ with $\kappa \le \lambda$,
   $u$ is a $(\kappa, \lambda)$-measure sequence which has a constructing embedding.

\item  $\mathcal{U}^{\rm sup}_{n+1} = \{ u \in \mathcal{U}_{n}: \mbox{for all nonzero $\alpha < \lh(u)$,
  $u(\alpha)$ concentrates on $\mathcal{U}_n$} \}$.

\item $\mathcal{U}^{\rm sup}_{\infty} = \bigcap_{n<\omega}\mathcal{U}_n$.

\end{itemize}

\end{definition}

As in Section \ref{vanillaradin},   it follows from the countable completeness of the measures in $u$ that if
 $u \in \mathcal{U}^{\rm sup}_{\infty}$
every measure in $u$ concentrates on $\mathcal{U}^{\rm sup}_{\infty}$.

\begin{definition}
Given a measure sequence $u\in \mathcal{U}^{\rm sup}_{\infty}$, we let
  $\kappa_u = \min(ON \setminus u(0))$ and $\lambda_u = \ot( u(0) )$.
\end{definition}

\begin{definition}
Given a $(\kappa, \lambda)$-measure sequence $u$, a non-zero  ordinal $\alpha < \lh(u)$ is  a \emph{weak repeat point} for $u$
 if and only if  for all $X \in u(\alpha)$ there exists a non-zero $\beta < \alpha$ such that $X \in u(\beta)$.
\end{definition}

Failure to be a weak repeat point is witnessed by a ``novel'' subset of $S(\kappa, \lambda)$,
 and $\vert S(\kappa, \lambda) \vert = \lambda^{<\kappa}$, so we have the following
 easy result.

\begin{lemma} \label{sccrepeatlemma}
A $(\kappa,\lambda)$-measure sequence of length $(2^{\lambda^{<\kappa}})^+$ contains a weak repeat point.
\end{lemma}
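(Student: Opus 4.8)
The plan is to argue by contradiction via a cardinality count, exactly along the lines suggested by the remark preceding the statement. Suppose $u$ is a $(\kappa,\lambda)$-measure sequence with $\lh(u) = (2^{\lambda^{<\kappa}})^+$ which has no weak repeat point. Unwinding the definition, for every non-zero $\alpha < \lh(u)$ the failure of $\alpha$ to be a weak repeat point supplies a ``novel'' set $X_\alpha \in u(\alpha)$ with the property that $X_\alpha \notin u(\beta)$ for every non-zero $\beta < \alpha$. Using the Axiom of Choice, I fix one such set $X_\alpha$ for each non-zero $\alpha < \lh(u)$.

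The key observation is that the assignment $\alpha \mapsto X_\alpha$ is injective. Suppose $0 < \alpha < \alpha' < \lh(u)$. By novelty of $X_{\alpha'}$ we have $X_{\alpha'} \notin u(\beta)$ for every non-zero $\beta < \alpha'$, and in particular $X_{\alpha'} \notin u(\alpha)$ since $\alpha$ is non-zero and $\alpha < \alpha'$. On the other hand $X_\alpha \in u(\alpha)$ by choice, so $X_\alpha \neq X_{\alpha'}$.

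It remains to count. Each $X_\alpha$ is a subset of $S(\kappa,\lambda)$, so $\alpha \mapsto X_\alpha$ is an injection from the set of non-zero ordinals below $(2^{\lambda^{<\kappa}})^+$ into the power set $P(S(\kappa,\lambda))$. The domain has cardinality $(2^{\lambda^{<\kappa}})^+$, while by the remark recorded just before the lemma $\vert S(\kappa,\lambda) \vert = \lambda^{<\kappa}$, so the codomain has cardinality $2^{\lambda^{<\kappa}} < (2^{\lambda^{<\kappa}})^+$. This contradicts the existence of the injection, and hence $u$ must contain a weak repeat point.

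The argument is a pure pigeonhole count, so there is no genuine technical obstacle. The only points requiring care are the injectivity step, which relies on the fact that novelty of $X_{\alpha'}$ excludes it from \emph{every} earlier measure in the sequence, in particular from $u(\alpha)$, and the cardinality computation $\vert S(\kappa,\lambda) \vert = \lambda^{<\kappa}$, which has already been noted in the discussion preceding the statement and follows directly from Definition \ref{skappalambda} together with Solovay's theorem that $\lambda^{<\kappa} = \lambda$ when $\lambda$ is regular.
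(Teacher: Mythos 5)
Your proof is correct and is exactly the argument the paper has in mind: the remark preceding the lemma (novel sets witness failure, and $\vert S(\kappa,\lambda)\vert = \lambda^{<\kappa}$) is precisely the pigeonhole count you carry out in detail, with the injectivity of $\alpha \mapsto X_\alpha$ being the only point needing verification. One small note: the appeal to Solovay's theorem in your final sentence is unnecessary (and not strictly available, since a bare $(\kappa,\lambda)$-measure sequence need not come with a supercompactness embedding), but this does not affect the argument, as the count only needs $\vert S(\kappa,\lambda)\vert \le \lambda^{<\kappa}$, which follows directly from Definition \ref{skappalambda}.
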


   For the purposes of our main result, we need a good $(\kappa, \kappa^+)$-measure sequence
   with a weak repeat point.

\begin{lemma} \label{gettingulemma}
Let $GCH$ hold and let $j: V \rightarrow M$ witness that $\kappa$ is $\kappa^{+3}$-supercompact.
Then there exists a $(\kappa, \kappa^+)$-measure
sequence $u$ such that $u \in \mathcal{U}^{\rm sup}_{\infty}$ and $u$ has a weak repeat
point.
\end{lemma}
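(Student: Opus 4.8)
The plan is to construct the measure sequence $u$ as an initial segment $u_j \restriction \theta$ of the sequence derived from $j$, where $\theta$ is large enough to guarantee a weak repeat point but small enough that the initial segments remain good. First I would fix the supercompactness embedding $j : V \rightarrow M$ witnessing $\kappa^{+3}$-supercompactness, and form the $(\kappa, \kappa^+)$-measure sequence $u_j$ as in Definition \ref{sccradinuj}, taking $\lambda = \kappa^+$. Under $GCH$ we have $|S(\kappa, \kappa^+)| = (\kappa^+)^{<\kappa} = \kappa^+$, so the relevant bound from Lemma \ref{sccrepeatlemma} is $(2^{\kappa^+})^+ = \kappa^{+3}$; thus I would aim to show $u_j$ has length at least $\kappa^{+3}$, and then Lemma \ref{sccrepeatlemma} immediately hands over a weak repeat point below that length. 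The task therefore splits into two parts: showing $\lh(u_j)$ is large enough, and showing the sequence (and its initial segments) lie in $\mathcal{U}^{\rm sup}_{\infty}$.

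For the length computation, I would argue that the construction of $u_j$ continues for all $\alpha < \kappa^{+3}$. The construction halts at $\alpha$ only when $u_j \restriction \alpha \notin M$. Because $j$ witnesses $\kappa^{+3}$-supercompactness, $M$ is closed under $\kappa^{+3}$-sequences, so $M$ contains every initial segment $u_j \restriction \alpha$ of length at most $\kappa^{+3}$ provided each such segment is a genuine sequence of length $< \kappa^{+3}$ with entries in $V_{j(\kappa)} \cap M$. The entries $u_j(\alpha)$ are measures on $S(\kappa, \kappa^+)$, hence elements of $H(\kappa^{++})$ under $GCH$, and $j[\lambda] = j[\kappa^+]$ is likewise in $M$; so closure of $M$ under $\kappa^{+3}$-sequences keeps every proper initial segment of length $\le \kappa^{+3}$ inside $M$, and the recursion does not terminate before reaching length $\kappa^{+3}$.

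The main obstacle is the goodness of $u$, i.e.\ verifying $u = u_j \restriction \theta \in \mathcal{U}^{\rm sup}_{\infty}$ for the chosen $\theta \ge$ the repeat-point witness. By the remark following Definition \ref{sccgoodmeasuresequence}, goodness reduces to showing that each measure $u_j(\alpha)$ concentrates on good $(\kappa_x, \lambda_x)$-measure sequences, and the characterization $B \in u_j(\alpha) \iff u_j \restriction \alpha \in j(B)$ reduces this in turn to showing $u_j \restriction \alpha \in j(\mathcal{U}^{\rm sup}_{\infty})^M$. Here the key point flagged in the excerpt comes into play: I would use the extra strength — $\kappa^{+3}$-supercompactness rather than just $\kappa^+$-supercompactness — to locate, inside $M$, a constructing embedding for each initial segment $u_j \restriction \alpha$. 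Concretely, since $M$ is sufficiently closed, $M$ can see an extender or embedding approximating $j$ well enough to reconstruct the measures $u_j(\beta)$ for $\beta < \alpha$ as derived measures; this witnesses $u_j \restriction \alpha \in \mathcal{U}^{\rm sup}_0{}^M$, and an induction on $n$ then propagates membership in $\mathcal{U}^{\rm sup}_n{}^M$ for all $n$.

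Finally I would assemble the pieces: choose $\theta$ with the repeat-point witness $< \theta \le \kappa^{+3}$, set $u = u_j \restriction \theta$, conclude $u \in \mathcal{U}^{\rm sup}_{\infty}$ from the goodness argument, and invoke Lemma \ref{sccrepeatlemma} to extract the weak repeat point. The subtle care throughout is the mismatch between $u(0)$ and $u_j(0)$ noted after Definition \ref{sccconstructingembedding} — the constructing embeddings I produce in $M$ need only reproduce the measures $u(\alpha)$ for $\alpha > 0$, not the zeroth entry, which is exactly what the relaxed Definition \ref{sccconstructingembedding} permits, so this apparent discrepancy causes no difficulty.
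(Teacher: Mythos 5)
Your outline matches the paper's in its skeleton: derive $u_j$ from $j$ with $\lambda=\kappa^+$, use closure of $M$ under $\kappa^{+3}$-sequences to run the recursion for $\kappa^{+3}$ steps, use GCH to compute the repeat-point bound $(2^{(\kappa^+)^{<\kappa}})^+=\kappa^{+3}$, and reduce goodness to exhibiting constructing embeddings \emph{in $M$} for the initial segments $u_j\restriction\beta$. You also correctly spot why Definition \ref{sccconstructingembedding} tolerates $u(0)\neq u_j(0)$. But at the central step your proof has a genuine gap: the sentence ``$M$ can see an extender or embedding approximating $j$ well enough to reconstruct the measures $u_j(\beta)$ as derived measures'' is precisely the assertion that needs proof, and in the supercompact setting it is not obtainable from extenders at all. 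A constructing embedding here must witness $\kappa^+$-supercompactness (its target must contain the pointwise image of $\kappa^+$), and --- more importantly --- even granted \emph{some} supercompactness embedding definable in $M$, nothing says its derived measure sequence agrees with $u_j$; that agreement is the entire content of the lemma, not a consequence of ``sufficient closure.''

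The paper closes this gap with a specific construction you would need to reproduce. Let $W$ be the supercompactness measure on $P_\kappa(\kappa^{+2})$ derived from $j$; under GCH, $W$ has size $\kappa^{+3}$, so $W\in M$, and one shows $Ult(M,W)$ computed in $M$ is the restriction of $j_0\colon V\to M_0=Ult(V,W)$ to $M$. The exponent $+2$ is a deliberate balance: any larger and $W$ escapes $M$; any smaller and $M_0$ fails to be closed under $\kappa^{+2}$-sequences, which is needed so that $M_0$ contains the initial segments $u^*\restriction\beta$ for $\beta<\kappa^{+3}$. The measure agreement is then proved via the factor map $k\colon M_0\to M$, $k([F]_W)=j(F)(j[\kappa^{+2}])$, which satisfies $k\circ j_0=j$, $\crit(k)=(\kappa^{+4})^{M_0}>\kappa^{+3}$, and (by GCH) $k\restriction H_{\kappa^{+3}}=\mathrm{id}$; from $k(u^*\restriction\beta)=u\restriction\beta$ one gets the chain $X\in u^*(\beta)\iff u^*\restriction\beta\in j_0(X)\iff u\restriction\beta\in j(X)\iff X\in u(\beta)$, inductively on $\beta$. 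Without this (or an equivalent) argument, your claim that $u_j\restriction\alpha\in(\mathcal{U}^{\rm sup}_0)^M$ is unsupported, and the subsequent induction on $\mathcal{U}^{\rm sup}_n$ --- which itself still needs the agreement between $V$ and $M$ about membership in $\mathcal{U}_n$ for measure sequences in $V_\kappa$, available because $\kappa^{+3}=(\kappa^{+3})^M$ --- has nothing to stand on.
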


\begin{proof}
  Evidently $M$ contains every $(\kappa, \kappa^+)$-measure sequence of length
   less than $\kappa^{+3}$, so that the construction of $u_j$ runs for
   at least $\kappa^{+3}$ steps. By Lemma \ref{sccrepeatlemma} a weak repeat point $\alpha$ appears
   before stage $\kappa^{+3}$, so it suffices to check that $u \in \mathcal{U}^{\rm sup}_{\infty}$, where
   $u = u_j \restriction (\alpha +1)$. Clearly $j$ is a constructing embedding for $u$.
   To verify that $u$ is good, we will first use a reflection argument to show
   that $u$ has a constructing embedding in $M$.

   Let $W$ be the supercompactness measure on $P_\kappa (\kappa^{+2})$ induced by
   the embedding $j$ and let $j_0 : V \rightarrow M_0 = Ult(V, W)$ be the usual
   ultrapower map. The following claims are all standard and easy to verify:
\begin{enumerate}
\item $W \in M$.

\item If we define a map $k$ from $M_0$ to $M$ by setting
   $k : [F]_W \mapsto j(F)(j[\kappa^{+2}])$ then $k$ is elementary,
   and $k \circ j_0 = j$.

\item $\kappa^{+3} = (\kappa^{+3})^{M_0} < \crit(k) = (\kappa^{+4})^{M_0} < \kappa^{+4} = (\kappa^{+4})^M$.

\item The sets $j_0[\kappa^+]$ and $j_0[\kappa^{+2}]$ are in $M_0$,
   $k(j_0[\kappa^+]) = j[\kappa^+]$, and $k(j_0[\kappa^{+2}]) = j[\kappa^{+2}]$.

\item Since $\kappa^{+3} \subseteq \rge(k)$, it follows easily from $GCH$ that
   $H_{\kappa^{+3}} \subseteq \rge(k)$, and so that  $k \restriction H_{\kappa^{+3}} = id$.

\item Let $j_0^*: M \rightarrow M^* = Ult(M, W)$ be the ultrapower map defined from $W$ in $M$. For every $a \in M$,
   ${}^{P_\kappa (\kappa^{+2})} TC({a}) \subseteq M$ and so $j_0(M) = j_0^*(M)$, that
   is $j_0^* = j_0 \restriction M$.
\end{enumerate}
Now let $u^* = u_{j_0^*}^M$, the measure sequence constructed by the embedding $j_0^*$ in the
   model $M$. We claim that the construction of $u^*$ proceeds for at least $\alpha +1$

   steps and that $u(\beta) = u^*(\beta)$ for all $\beta$ with $0 < \beta \le \alpha$. At the start,
   $u^*(0) = j_0^*[\kappa^+] = j_0[\kappa^+]$, and we now proceed by induction
   on $\beta$ with $0 < \beta \le \alpha$. Note that all the models $V$, $M$, $M_0$ and $M^*$ agree
   on the computation of $P(S(\kappa, \kappa^+))$.

   Suppose that $u^*(\eta) = u(\eta)$ when $0 < \eta < \beta$. Since $M_0$ is closed under
   $\kappa^{+2}$-sequences, $u^* \restriction \beta \in M_0$ and the properties of $k$
   listed above imply that $k(u^* \restriction \beta) = u \restriction \beta$.
   Now for every $X \subseteq  S(\kappa, \kappa^+)$ we have that

$\hspace{4.cm}$  $X \in u^*(\beta)   \iff   u^* \restriction \beta \in j_0^*(X)$
   
$\hspace{5.7cm}$     $\iff   u^* \restriction \beta \in j_0(X)$ 
   
$\hspace{5.7cm}$     $\iff   u \restriction \beta \in j(X) $
             
$\hspace{5.7cm}$     $\iff   X \in u(\beta),$

   where the equivalences follow respectively from the definition of $u^*$, the agreement between
   $j_0$ and $j_0^*$, the properties of $k$, and the definition of $u$.

    We can now verify that $u \in {\mathcal U}^{\rm sup}_1$. This holds because for each
    $\beta \le \alpha$ the sequence $u \restriction \beta$ has a constructing embedding in $M$,
    and so by the recursive definition of $u$ the measure $u(\beta)$ concentrates on the
    class of sequences with a constructing embedding.

   The rest of the argument is straightforward. We start by observing that since
   $\kappa^{+3} = (\kappa^{+3})^M$, every measure in $u$ concentrates on the
   set of measure sequences $x$ such that $x(0) \in P_\kappa (\kappa^+)$
    and  $\lh(x) < \kappa_x^{+3}$. By the agreement between $V$ and $M$, a routine
   induction shows that for all $n$ and for all such measure sequences
   $x$ we have $x \in \mathcal{U}_{n} \iff x \in \mathcal{U}_{n}^M$.

   We now establish by induction on $n \ge 1$ that $u \in \mathcal{U}^{\rm sup}_n$. We just did the base
   case $n=1$, so suppose that we established $u \in \mathcal{U}^{\rm sup}_n$. By definition,
   $u \in \mathcal{U}^{\rm sup}_{n+1}$ if and only if $u(\beta)$ concentrates on $\mathcal{U}^{\rm sup}_n$ for
   $0 < \beta \le \alpha$, that is if $u \restriction \beta \in (\mathcal{U}^{\rm sup}_n)^M$ for
   all such $\beta$.  By definition $u \restriction \beta \in (\mathcal{U}^{\rm sup}_n)^M$ if and
   only if $u(\gamma)$ concentrates on $(\mathcal{U}^{\rm sup}_{n-1})^M$ for all $\gamma$ with  $0 < \gamma < \beta$,
   and by the remarks in the previous paragraph this amounts to verifying that
   $u(\gamma)$ concentrates on $\mathcal{U}^{\rm sup}_{n-1}$ which is true since $u \in \mathcal{U}_{n}$.
\end{proof}

   Recall that in  Definition \ref{sccconstructingembedding}, we permitted
   that $w(0) \neq j[\lambda]$ in the definition of ``$j$ is a constructing embedding for $w$''.
   In  light of the proof of Lemma \ref{gettingulemma}, this may seem less surprising:
   the set $j[\lambda]$ is necessarily an element of $M$, but may not be of the form
   $i[\lambda]$ for a suitable supercompactness embedding {\em defined in $M$.}

   Before defining supercompact Radin forcing, we need to define a family of ``type changing maps''.
   These are needed because if $x \in S(\kappa, \lambda) \cap {\mathcal U}^{\rm sup}_\infty$, then
   the measures in $x$ concentrate on $S(\kappa_x, \lambda_x)$ rather than $S(\kappa, \lambda)$.

\begin{remark} The type changing maps are functions from ordinals to ordinals, whose role is to
   change the type of a measure sequence $x$ via pointwise application to $x(0)$.
   Accordingly we will systematically abuse notation in the following way: whenever $\nu$
   is one of the type changing maps $\pi_v$, $\rho_v$ or $\sigma_{v w}$
   from the forthccoming Definitions \ref{piandrho} and \ref{sigma},
   and $x$ is a measure sequence then
\[
   \nu(x) = \langle  \nu[x(0)] \rangle^\frown \langle x(\beta) : 0 < \beta < \lh(x) \rangle.
\]
\end{remark}

  Recall that we defined an ordering $\prec$ on $A(\kappa, \lambda)$ by stipulating that
  $x \prec y$ if and only if $x \subseteq y$ and $\lambda_x < \kappa_y$.

\begin{definition} \label{precforseq}
  Let $v, w \in S(\kappa, \lambda)$. Then $w \prec v$ if and only if $w(0) \prec v(0)$,
  $\lh(w) < \kappa_v$ and $w(\beta) \in V_{\kappa_v}$ for all $\beta$ such that
  $0 < \beta < \lh(w)$.
\end{definition}

   In the sequel there will be situations where several spaces of the general form
   $S(\kappa, \lambda)$ appear at once. We will only compare sequences $v$ and $w$
   when they lie in the same such space, and the values of $\kappa$ and $\lambda$ should
   always be clear from the context.

\begin{definition} \label{piandrho}
   Let $v \in S(\kappa, \lambda)$. Then we define $\pi_v : v(0) \simeq \lambda_v$ to be the
   unique order isomorphism between $v(0)$ and $\lambda_v$, and also define
   $\rho_v : \lambda_v \simeq v(0)$ by $\rho_v = \pi_v^{-1}$.
\end{definition}

\begin{definition} \label{sigma}
   Let $v, w \in S(\kappa, \lambda)$ with $w \prec v$.
  Then we define $\sigma_{w v} : \lambda_w \rightarrow \lambda_v$ by
  $\sigma_{w v} = \pi_v \circ \rho_w$.
\end{definition}

   Informally $\sigma_{w v}$ is a ``collapsed'' version of the inclusion map from
   $w(0)$ to $v(0)$.   The following Lemmas are straightforward.

\begin{lemma} Let $v \in S(\kappa, \lambda)$. Then $\rho_v \restriction \kappa_v$ is the identity,
   $\rho_v(\kappa_v) \ge \kappa$, and
   for every  $w \in S(\kappa_v, \lambda_v)$ we have $\rho_v(w) \in S(\kappa, \lambda)$ with
   $\rho_v(w) \prec v$.
\end{lemma}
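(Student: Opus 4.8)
The plan is to read everything off from the fact that $\rho_v$ is simply the increasing enumeration of the set $v(0)$. Write $x = v(0)$, so $x \in A(\kappa,\lambda)$, $\kappa_v = x \cap \kappa$ is inaccessible, and $\lambda_v = \ot(x)$ is regular with $\lambda_v \ge \kappa_v$. Since $\pi_v$ is the transitive collapse of $x$, its inverse $\rho_v : \lambda_v \to x$ sends $\xi < \lambda_v$ to the $\xi$-th element of $x$ in increasing order; in particular $\rho_v$ is strictly increasing with $\rge(\rho_v) = x$. The key structural observation is that $x \cap \kappa = \kappa_v$ as sets, i.e.\ $x$ contains every ordinal below $\kappa_v$ and no ordinal in $[\kappa_v, \kappa)$. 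This yields the first two assertions at once: the first $\kappa_v$ elements of $x$ are exactly $0,1,\dots$, so $\rho_v(\xi) = \xi$ for $\xi < \kappa_v$, giving $\rho_v \restriction \kappa_v = \mathrm{id}$; and (when $\kappa_v < \lambda_v$) the value $\rho_v(\kappa_v)$ is the least element of $x$ not below $\kappa_v$, which by the emptiness of $x \cap [\kappa_v, \kappa)$ must be $\ge \kappa$.

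For the third assertion I would fix $w \in S(\kappa_v,\lambda_v)$ and write $y = \rho_v[w(0)]$, so that $\rho_v(w) = \langle y \rangle^\frown \langle w(\beta) : 0 < \beta < \lh(w) \rangle$ by the convention for type-changing maps. The length is $\lh(w) < \kappa_v < \kappa$ and the tail entries $w(\beta)$ lie in $V_{\kappa_v} \subseteq V_\kappa$, so the only real work is to check $y \in A(\kappa,\lambda)$. Here I would combine monotonicity of $\rho_v$ with the two facts just proved: since $\rho_v(\xi) = \xi < \kappa$ for $\xi < \kappa_v$ and $\rho_v(\xi) \ge \rho_v(\kappa_v) \ge \kappa$ for $\xi \ge \kappa_v$, the map $\rho_v$ fixes the part of $w(0)$ below $\kappa_v$ and pushes the rest above $\kappa$. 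Hence $y \cap \kappa = w(0) \cap \kappa_v = \kappa_{w(0)}$, an inaccessible cardinal, while $\ot(y) = \ot(w(0)) = \lambda_{w(0)}$ is regular with $\lambda_{w(0)} \ge \kappa_{w(0)} = y \cap \kappa$, using that $w(0) \in A(\kappa_v,\lambda_v)$ and that $\rho_v$ is order-preserving. This is precisely the defining condition for $y \in A(\kappa,\lambda)$.

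Finally I would verify $\rho_v(w) \prec v$ directly from Definition \ref{precforseq}: the length and tail conditions are the ones already noted, and for the first coordinate I need $y \prec x$. Now $y = \rho_v[w(0)] \subseteq \rge(\rho_v) = x$, and $\lambda_y = \ot(w(0)) < \kappa_v$ because $w(0)$ is a subset of $\lambda_v$ of cardinality less than the regular cardinal $\kappa_v$; thus $\lambda_y < \kappa_v = \kappa_x$, which is exactly $y \prec x$. None of the steps is genuinely hard; the only point requiring care is the threshold behaviour of $\rho_v$ at $\kappa_v$, i.e.\ the computation $y \cap \kappa = w(0) \cap \kappa_v$, which is where the hypotheses $x \cap \kappa = \kappa_v$ and $x \cap [\kappa_v,\kappa) = \emptyset$ enter. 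I would also dispatch the degenerate case $\kappa_v = \lambda_v$ separately: there $x = \kappa_v$ and $\rho_v$ is the identity, so $\rho_v(\kappa_v)$ is undefined and that clause is vacuous, while every member of $A(\kappa_v,\kappa_v)$ lies in $A(\kappa,\lambda)$ and is $\prec \kappa_v$, making the remaining claims trivial.
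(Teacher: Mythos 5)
Your proof is correct. The paper states this lemma without proof (declaring it straightforward), and your argument — reading everything off from the fact that $\rho_v$ enumerates $v(0)$, which contains all ordinals below $\kappa_v$ and none in $[\kappa_v,\kappa)$, so that $\rho_v$ fixes everything below $\kappa_v$ and jumps to $\ge\kappa$ at $\kappa_v$ — is exactly the routine verification the authors intended, with the degenerate case $\kappa_v=\lambda_v$ handled carefully to boot.
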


\begin{lemma} Let $v, w \in S(\kappa, \lambda)$ with $w \prec v$.
   Then $\sigma_{w v} \restriction \kappa_w$ is the identity,
   $\pi_v(w) \in S(\kappa_v, \lambda_v)$,
   and for every $x \in S(\kappa_w, \lambda_w)$ we have
   $\sigma_{w v}(x) \in S(\kappa_v, \lambda_v)$.
\end{lemma}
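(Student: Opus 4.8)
The plan is to verify the three assertions in turn, directly from the definitions of $\pi_v$, $\rho_v$ and $\sigma_{wv}$, with the preceding Lemma on $\rho_v$ as the main input. Throughout I would use the hypothesis $w \prec v$, which by Definition \ref{precforseq} unpacks into $w(0) \prec v(0)$ (so $w(0) \subseteq v(0)$ and $\lambda_w < \kappa_v$), $\lh(w) < \kappa_v$, and $w(\beta) \in V_{\kappa_v}$ for $0 < \beta < \lh(w)$; I would also record that $\kappa_w \le \lambda_w$ since $w(0) \in A(\kappa,\lambda)$, so that $\kappa_w \le \lambda_w < \kappa_v$.

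For the first assertion I would combine two consequences of the previous Lemma: applied to $w$ it gives $\rho_w \restriction \kappa_w = \mathrm{id}$, and applied to $v$ it gives $\rho_v \restriction \kappa_v = \mathrm{id}$, hence $\pi_v \restriction \kappa_v = \mathrm{id}$ because $\pi_v = \rho_v^{-1}$. For $\xi < \kappa_w$ one then reads off $\sigma_{wv}(\xi) = \pi_v(\rho_w(\xi)) = \pi_v(\xi) = \xi$, using $\kappa_w < \kappa_v$ at the last step.

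For the second and third assertions the content is the same: one must check that applying an order isomorphism ($\pi_v$, respectively $\sigma_{wv}$) to the first coordinate lands it in the correct space $A(\kappa_v,\lambda_v)$, while the higher coordinates, being unchanged and lying in $V_{\kappa_w} \subseteq V_{\kappa_v}$, cause no trouble, and the length bounds $\lh(w) < \kappa_v$ and $\lh(x) < \kappa_w < \kappa_v$ are immediate. Since $\pi_v$ and $\sigma_{wv}$ are order preserving, the order types of $\pi_v[w(0)]$ and $\sigma_{wv}[x(0)]$ equal $\lambda_w$ and $\lambda_x$, which are regular cardinals dominating the relevant local critical point, and the cardinalities stay below $\kappa_v$. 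The one genuine computation is to identify the intersection with $\kappa_v$. Here the key structural fact is that $\rho_v \restriction \kappa_v = \mathrm{id}$ forces $\kappa_v \subseteq v(0)$, so that $\pi_v$ fixes every ordinal below $\kappa_v$ while sending each element of $v(0)$ above $\kappa$ to an ordinal $\ge \kappa_v$ (since $\ot(v(0) \cap \gamma) \ge \kappa_v$ once $\gamma \ge \kappa$). Splitting $w(0)$ into its part below $\kappa$ (which is exactly $\kappa_w$ and is fixed pointwise) and its part in $[\kappa,\lambda)$ (which is pushed above $\kappa_v$), one obtains $\pi_v[w(0)] \cap \kappa_v = \kappa_w$, an inaccessible cardinal; the analogous split of $x(0)$ at $\kappa_w$, together with $\rho_w(\kappa_w) \ge \kappa$ from the previous Lemma, yields $\sigma_{wv}[x(0)] \cap \kappa_v = \kappa_x$.

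I expect this last bookkeeping --- checking that the type changing maps carry the ``local critical point'' of the first coordinate to the correct value --- to be the only step requiring care; everything else is a direct substitution into the definitions.
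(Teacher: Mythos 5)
Your proposal is correct, and it is essentially the paper's approach: the paper states this lemma without proof (declaring it straightforward), and your argument is exactly the intended routine unpacking of the definitions, with the only substantive points — that $\rho_w \restriction \kappa_w$ and $\rho_v \restriction \kappa_v$ are the identity, that $\rho_w(\kappa_w) \ge \kappa$, and the resulting computations $\pi_v[w(0)] \cap \kappa_v = \kappa_w$ and $\sigma_{w v}[x(0)] \cap \kappa_v = \kappa_x$ — all identified and handled correctly. Nothing is missing.
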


\begin{definition}
A {\em good pair} is a pair $(u, A)$ where $u \in \mathcal{U}^{\rm sup}_{\infty}$,  and either $\lh(u) = 1$ and
  $A = \emptyset$ or $\lh(u) > 1$, $A \subseteq \mathcal{U}^{\rm sup}_{\infty} \cap  S(\kappa_{u}, \lambda_{u})$
   and  $A$ is  $u$-large.
\end{definition}

We  define, for each $u \in  \mathcal{U}^{\rm sup}_{\infty}$ with length greater than $1$,
    the corresponding {\em supercompact Radin forcing} ${\mathbb R}^{\rm sup}_{u}$.

\begin{definition} \label{sccradincdn}
 Let $u \in  \mathcal{U}^{\rm sup}_{\infty}$ with length greater than $1$.
A condition in  ${\mathbb R}^{\rm sup}_{u}$ is a finite sequence
\[
     p=\langle (u^0, A^0), \ldots, (u^i, A^i), \ldots, (u^n, A^n) \rangle
\] where:
\begin{enumerate}
\item $u^n = u$.
\item Each $(u^i, A^i)$ is a good pair.
\item $u^i \in  S(\kappa_{u^n}, \lambda_{u^n})$ for  all $i < n$.
\item $u^i \prec u^{i+1}$ for all $i < n-1$.
\end{enumerate}

   The sequence $\langle (u^0, A^0), \ldots,  (u^{n-1}, A^{n-1}) \rangle$ is the
   {\em stem} of the condition, and $A^n$ is the {\em upper part.}

\end{definition}

\begin{definition} \label{sccradinextn}
Let
\[
       p = \langle (u^0, A^0), \ldots, (u^i, A^i), \ldots, (u^m=u, A^m) \rangle
\]
   and
\[
       q = \langle (v^0, B^0), \ldots, (v^j, B^j), \ldots, (v^n=u, B^n) \rangle
\]
     be in ${\mathbb R}^{\rm sup}_{u}$. Then $q \le p$ ($q$ is an extension of $p$) iff:

\begin{enumerate}

\item \label{sccext1} There exist natural numbers $j_0 < \ldots < j_m = n$ such that $v^{j_k} = u^k$ and $B^{j_k} \subseteq A^k$.

\item If $j$ is such that  $0 \le j \le n$ and $j \notin \{ j_0, \ldots, j_m \}$,
    and  $i$ is  least such that $\kappa_{v^j} < \kappa_{u^i}$, then:

\begin{itemize}

\item \label{sccext2a} If $i = m$, then $v^j \in A^m$ and $\rho_{v_j}[B^j] \subseteq A^m$.

\item \label{sccext2b} If $i < m$, then $\pi_{u^i}(v^j) \in A^i$ and
   $\sigma_{v^j u^i}[B^j] \subseteq A^i$.

\end{itemize}

\end{enumerate}

We also define $q \le^* p$ ($q$ is a direct extension of $p$) iff $q \le p$ and $n = m$.

\end{definition}

\begin{remark}   Any two conditions with the same stem are compatible. Just as  for supercompact Prikry forcing,
   that there are only $\lambda$ possible stems.
 Therefore  ${\mathbb R}^{\rm sup}_u$ satisfies the $\lambda^+-c.c.$.
\end{remark}

   The following Lemma shows that conditions can be extended in many ways by adding in new good pairs immediately
    below the top entry.

\begin{lemma} \label{addability1}
 Let $u \in {\mathcal U}^{\rm sup}_\infty$ with  $\lh(u) > 1$, and
  let
\[
     p = \langle (u^0, A^0), \ldots, (u^i, A^i), \ldots, (u^m=u, A^m) \rangle
\]
   be a condition in
  ${\mathbb R}^{\rm sup}_{u}$. Let $X$ be the set of sequences $v$ with the following property: there exists  a
   set $B$ such that $(v, B)$ is a good pair, and   if we set
\[
     q = \langle (u^0, A^0), \ldots, (u^i, A^i), \ldots, (u^{m-1}, A^m), (v, B),  (u^m=u, A^m) \rangle
\]
  then $q$ is a condition extending $p$. Then $X$ is $u$-large.
\end{lemma}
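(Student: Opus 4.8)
The plan is to fix a constructing embedding $j : V \rightarrow M$ for $u$ (which exists since $u \in \mathcal{U}^{\rm sup}_0$, cf.\ Definition \ref{sccconstructingembedding}) and to verify directly that $X$ belongs to every measure $u(\beta)$, using the criterion that for $0 < \beta < \lh(u)$ one has $X \in u(\beta) = u_j(\beta)$ if and only if $u_j \restriction \beta \in j(X)$, where $u_j(0) = j[\lambda]$. Writing $\kappa = \kappa_u$ and $\lambda = \lambda_u$, the embedding $j$ witnesses $\lambda$-supercompactness, so $M$ is closed under $\lambda$-sequences and in particular $j \restriction \lambda \in M$, and $V$ and $M$ agree on $P(S(\kappa,\lambda))$. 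Fixing $\beta$ I set $v^* = u_j \restriction \beta$ and aim to show $v^* \in j(X)$ by producing a witnessing upper part $B^*$; the choice $B^* = A^m$ (or $B^* = \emptyset$ in the degenerate case $\beta = 1$, where $\lh(v^*) = 1$) will turn out to work.

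First I would record the arithmetic of $v^*$. Since $v^*(0) = j[\lambda]$ we have $\kappa_{v^*} = \min(ON \setminus j[\lambda]) = \kappa$ and $\lambda_{v^*} = \ot(j[\lambda]) = \lambda$, so $v^* \in S(j(\kappa), j(\lambda))^M$, and the order isomorphism $\rho_{v^*} : \lambda \rightarrow j[\lambda]$ of Definition \ref{piandrho} is exactly $j \restriction \lambda$. The crucial observation, which drives the whole argument, is that for every good measure sequence $x \in S(\kappa, \lambda)$ one has $\rho_{v^*}(x) = j(x)$: the tail entries $x(\gamma)$ for $0 < \gamma < \lh(x)$ lie in $V_\kappa$ and are fixed by $j$ because $\crit(j) = \kappa$, while $x(0) \in P_\kappa \lambda$ has size below $\kappa$, so $j(x(0)) = j[x(0)] = \rho_{v^*}[x(0)]$; combining these via the type-changing convention gives the claim.

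Next I would check the two substantive clauses in the definition of $j(X)$, reading Definitions \ref{sccradincdn} and \ref{sccradinextn} in $M$ with all parameters replaced by their $j$-images. That $(v^*, B^*)$ is a good pair in $M$ follows because $u(\beta)$ concentrates on $\mathcal{U}^{\rm sup}_\infty$, giving $v^* \in (\mathcal{U}^{\rm sup}_\infty)^M$, and because $A^m$, being $u$-large, lies in $u(\delta) = v^*(\delta)$ for every $0 < \delta < \beta$ and hence is $v^*$-large in $M$. For the extension clause, inserting $(v^*, B^*)$ just below the top of $j(p)$ leaves a single unmatched entry; since $\kappa_{v^*} = \kappa$ while $\kappa_{j(u^i)} = \kappa_{u^i} < \kappa$ for $i < m$ and $\kappa_{j(u)} = j(\kappa) > \kappa$, the least $i$ with $\kappa_{v^*} < \kappa_{j(u^i)}$ is $i = m$, so clause \ref{sccext2a} applies and I must verify $v^* \in j(A^m)$ and $\rho_{v^*}[B^*] \subseteq j(A^m)$. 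The first holds because $A^m \in u(\beta)$. For the second, the observation of the previous paragraph gives $\rho_{v^*}[A^m] = \{ j(x) : x \in A^m \} \subseteq j(A^m)$ by elementarity. Finally the $\prec$-increasing requirement reduces, via elementarity for the stem, to $j(u^{m-1}) \prec v^*$, which holds since $j[u^{m-1}(0)] \subseteq j[\lambda]$ has order type $\lambda_{u^{m-1}} < \kappa = \kappa_{v^*}$, while $\lh(u^{m-1}) < \kappa$ and the tail of $u^{m-1}$ lies in $V_\kappa$ and is fixed by $j$.

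The main obstacle is the apparently delicate containment $\rho_{v^*}[B^*] \subseteq j(A^m)$, which at first sight seems to require comparing the two genuinely different operations $x \mapsto \rho_{v^*}(x)$ and $x \mapsto j(x)$. The point that makes everything collapse is the identity $\rho_{v^*} = j \restriction \lambda$ together with the fact that $j \restriction V_\kappa$ is the identity, which forces these two operations to agree on the good measure sequences in $S(\kappa, \lambda)$; once this is in hand the containment is immediate from elementarity and the choice $B^* = A^m$, and the remaining clauses are routine type-checking. Since $\beta$ with $0 < \beta < \lh(u)$ was arbitrary, we conclude $X \in u(\beta)$ for all such $\beta$, that is, $X$ is $u$-large.
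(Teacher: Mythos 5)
Your overall strategy is the same as the paper's: fix a constructing embedding $j$ for $u$, reduce the lemma to showing $u_j \restriction \beta \in j(X)$ for each $\beta$ with $0 < \beta < \lh(u)$, and drive everything off the identity $\rho_{u_j \restriction \beta} = j \restriction \lambda_u$, which gives $\rho_{u_j \restriction \beta}(x) = j(x)$ for $x \in S(\kappa_u, \lambda_u)$, together with the facts $A^m \in u(\beta)$ and $j(u^{m-1}) \prec u_j \restriction \beta$. Those computations are all correct and match the paper's. But there is a genuine gap where you assert that $(v^*, B^*)$ with $B^* = A^m$ is a good pair \emph{in $M$}. The definition of a good pair, read in $M$, requires three things: $v^* \in (\mathcal{U}^{\rm sup}_\infty)^M$, $B^*$ $v^*$-large, and $B^* \subseteq (\mathcal{U}^{\rm sup}_\infty)^M \cap S(\kappa_{v^*}, \lambda_{v^*})$. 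You verify the first two but not the third, and with $B^* = A^m$ the third is not available: the elements of $A^m$ are good measure sequences in $V$, but goodness in $M$ demands constructing embeddings \emph{in $M$}, and goodness does not obviously transfer from $V$ to the inner model $M$ — in the generality of Lemma \ref{addability1}, $j$ need only witness $\lambda_u$-supercompactness. This $V$-versus-$M$ agreement about goodness is exactly the delicate point of this paper: in Lemma \ref{gettingulemma} establishing such agreement, even for a restricted class of sequences, needs GCH, $\kappa^{+3}$-supercompactness, and a reflection argument through a factor embedding, and the remark after that lemma stresses that the relevant embeddings need not exist in $M$.

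The gap is local and repairable inside your own argument: take $B^* = A^m \cap (\mathcal{U}^{\rm sup}_\infty)^M$ instead. You have already shown $v^* \in (\mathcal{U}^{\rm sup}_\infty)^M$; applying the definition of goodness \emph{inside $M$}, each measure $v^*(\delta) = u(\delta)$ for $0 < \delta < \beta$ concentrates on $(\mathcal{U}^{\rm sup}_\infty)^M$, so this smaller $B^*$ is still $v^*$-large in $M$, it consists of good-in-$M$ sequences, and $\rho_{v^*}[B^*] \subseteq j[A^m] \subseteq j(A^m)$ exactly as before. This is in effect what the paper's proof does, though it packages the bookkeeping so the issue never arises: it first characterizes membership in $X$ by three internal conditions ($v \in A^m$; $u^{m-1} \prec v$ when $m > 0$; $\{ w \in S(\kappa_v, \lambda_v) : \rho_v(w) \in A^m \}$ is $v$-large) and then transfers that characterization to $M$ by elementarity, so the witnessing upper part in $M$ is manufactured inside $M$ (automatically cut down to $(\mathcal{U}^{\rm sup}_\infty)^M$) rather than imported from $V$ as a literal set. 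You should either adopt that formulation or insert the intersection fix; as written, the claim that $(v^*, A^m)$ is a good pair in $M$ is unjustified.
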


\begin{proof} Let $j$ be a constructing embedding for $u$.  To show $X$ is $u$-large we must show
  that $u_j \restriction \beta \in j(X)$ for all $\beta$ with $0 < \beta < \lh(u)$.  It is easy to see that
  $v \in X$ if and only if it satisfies the conditions:
\begin{enumerate}
\item $v \in A^m$.
\item $u^{m-1} \prec v$ (in case $m > 0$).
\item $\{  w \in S(\kappa_v, \lambda_v) : \rho_v(w) \in A^m \}$ is $v$-large.
\end{enumerate}

   Since $A_m$ is $u$-large, $A_m \in u(\beta)$ and so $u_j \restriction \beta \in j(A^m)$.
   It is easy to check that for any $x \in S(\kappa_u, \lambda_u)$, $j(x) \prec u_j \restriction \beta$;
   the main points are that $j( x(0) ) = j[ x(0) ]$ and $u_j(0) = j[\lambda_u]$.
   Since $u_j(0) = j[\lambda_u]$, $\rho_{u_j \restriction \beta} = j \restriction \lambda_u$, and
   so easily $\rho_{u_j \restriction \beta}(w) = j(w)$ for all $w \in S(\kappa_u, \lambda_u)$;
   so $A^m = \{ w \in S(\kappa_u, \lambda_u) : \rho_{u_j \restriction \beta}(w) \in j(A^m) \}$, and
   since $A^m$ is clearly $u_j \restriction \beta$-large we have that
   $u_j \restriction \beta \in j(X)$.
\end{proof}

   Lemma \ref{addability1} states that $u$-many of the elements of $A^m$ can be interpolated into the
   condition $p$. We can iterate this argument $\omega$ times, by setting $B^0 = A^m$ and then defining
   a decreasing sequence $\langle B^i : i < \omega \rangle$ such that each $B^i$ is a $u$-large set
   and every element of $B^{i+1}$ can be interpolated between $(u^{m-1}, A^{m-1})$ and $(u, B^i)$.
   Now let $A' = \bigcap_{i < \omega} B^i$. Since the  the measures in $u$ are countably complete,
   $A'$ is $u$-large and we obtain the following corollary of Lemma \ref{addability1}.

\begin{corollary} \label{addability1point5}
 Let $u \in {\mathcal U}^{\rm sup}_\infty$ with  $\lh(u) > 1$, and
  let
\[
     p = \langle (u^0, A^0), \ldots, (u^i, A^i), \ldots, (u^m = u, A^m) \rangle
\]
   be a condition in
  ${\mathbb R}^{\rm sup}_{u}$. Then there is a $u$-large set $A' \subseteq A^m$ such that
   if we set
\[
      p' = \langle (u^0, A^0), \ldots, (u^i, A^i), \ldots, (u^m = u, A') \rangle,
\]
 then  for every $v \in A'$ there exists $q \le p'$ of the form
\[
     q = \langle (u^0, A^0), \ldots, (u^i, A^i), \ldots, (u^{m-1}, A^m), (v, B),  (u^m = u, A') \rangle.
\]
\end{corollary}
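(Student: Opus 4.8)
The plan is to follow the iterative scheme indicated above, turning the single-step interpolation of Lemma \ref{addability1} into a closure condition by means of an $\omega$-length diagonalisation. For any $u$-large $C \subseteq A^m$, write $p_C$ for the condition obtained from $p$ by shrinking its upper part to $C$; this is again a condition, since $(u, C)$ is still a good pair. Applying Lemma \ref{addability1} to $p_C$ produces a $u$-large set of sequences that can be interpolated immediately below the top entry of $p_C$, and reading off the characterisation established inside the proof of that lemma (with $C$ in the role of $A^m$), this set is exactly
\[
   X(C) = \{ v : v \in C,\ u^{m-1} \prec v,\ \text{and } \{ w \in S(\kappa_v, \lambda_v) : \rho_v(w) \in C \} \text{ is } v\text{-large} \}.
\]
In particular $X(C) \subseteq C$.

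First I would iterate: set $B^0 = A^m$ and $B^{i+1} = X(B^i)$, obtaining a $\subseteq$-decreasing $\omega$-sequence of sets, each $u$-large by Lemma \ref{addability1}. Put $A' = \bigcap_{i < \omega} B^i$. Since the measures in $u$ are $\kappa_u$-complete, hence countably complete, $A'$ is $u$-large, and $A' \subseteq B^0 = A^m$; this is the set required by the Corollary, with $p'$ the result of shrinking the upper part of $p$ to $A'$.

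Then I would verify the interpolation property for each $v \in A'$. Because $v \in B^{i+1} = X(B^i)$ for every $i$, the definition of $X$ gives $u^{m-1} \prec v$ together with the fact that $\{ w : \rho_v(w) \in B^i \}$ is $v$-large for every $i$. Now $v \in \mathcal{U}^{\rm sup}_\infty$, so the measures appearing on $v$ are themselves countably complete; intersecting over $i$ therefore yields that the set
\[
   \{ w \in S(\kappa_v, \lambda_v) : \rho_v(w) \in A' \} = \bigcap_{i < \omega} \{ w : \rho_v(w) \in B^i \}
\]
is $v$-large. Taking $B$ to be this set intersected with $\mathcal{U}^{\rm sup}_\infty$ (still $v$-large, as the measures in $v$ concentrate on $\mathcal{U}^{\rm sup}_\infty$) makes $(v, B)$ a good pair, and by construction the resulting $q$ satisfies the interpolation clause of part (2) of Definition \ref{sccradinextn} relative to $p'$, so $q \le p'$ inserts $(v, B)$ just below the top entry, as required. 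The length-one case, where $B = \emptyset$, is handled trivially by the same analysis.

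The point to watch, and the only genuine obstacle, is the circularity built into the statement: whether $v$ is interpolable below the new upper part $A'$ depends on $A'$, yet $A'$ is assembled precisely out of interpolability data. The resolution is the two distinct uses of countable completeness at different levels: completeness of the measures in $u$ keeps the limit set $A'$ $u$-large at the global level, while completeness of the measures in each $v$ lets the local largeness of $\{ w : \rho_v(w) \in B^i \}$ survive the countable intersection and pass to $A'$. Checking that shrinking upper parts preserves conditionhood is routine.
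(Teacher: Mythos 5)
Your proof is correct and follows essentially the same route as the paper: iterate the interpolability set of Lemma \ref{addability1} to get a decreasing sequence $B^0 = A^m$, $B^{i+1} = X(B^i)$, take $A' = \bigcap_{i<\omega} B^i$, and use countable completeness of the measures in $u$ to see $A'$ is $u$-large. You additionally spell out the step the paper leaves implicit, namely that for $v \in A'$ the sets $\{ w : \rho_v(w) \in B^i \}$ must be intersected using countable completeness of the measures in $v$ itself to produce a single $v$-large $B$ with $\rho_v[B] \subseteq A'$; this is a genuine (and correctly handled) refinement of the paper's terse argument, not a different approach.
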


 Similarly, whenever $\lh(u^i) > 1$ there are many candidates for interpolation
   between $(u^{i-1}, A^{i-1})$ and $(u^i, A^i)$ in an extension of $p$.
 Before we prove this, we state a very easy but useful factoring lemma.

\begin{lemma} \label{sccfactor}
 Suppose that
\[
    p=\langle (u^0, A^0), \ldots, (u^i, A^i), \ldots, (u^m=u, A^m) \rangle \in {\mathbb R}^{\rm sup}_{u}
\] and  $i < m$ with $\lh(u^i) > 1$.  Let
\[
     p^{> i}= \langle  (u^{i+1}, A_*^{i+1}), \ldots, (u^m = u, A_*^m)  \rangle,
\]
   where $A^j_* = A^j$ for all $j > i+1$ while
   $A^{i+1}_* = \{ w \in A^{i+1} :  u^i \prec \rho_{u^{i+1}}(w) \}$.
   Let
\[
     p^{\le i}= \langle  (u^0_*, A^0), \ldots, (u^{i-1}_*, A^{i-1}), (u^i, A^i) \rangle,
\]
    where  $u^j_* = \pi_{u^i}(u^j)$ for each $j < i$.

 Then  $p^{\le i} \in {\mathbb R}^{\rm sup}_{u^i}$, $p^{> i} \in {\mathbb R}^{\rm sup}_{u}$ and there exists
\[
    i : {\mathbb R}^{\rm sup}_{u^i} \downarrow p^{\le i} \times {\mathbb R}^{\rm sup}_{u} \downarrow p^{> i} \rightarrow {\mathbb R}^{\rm sup}_u
\]
which is an isomorphism between its domain and a dense subset of ${\mathbb R}^{\rm sup}_u \downarrow p$.
\end{lemma}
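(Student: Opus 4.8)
The plan is to write down the gluing map explicitly and then verify everything by inspection of Definitions \ref{sccradincdn} and \ref{sccradinextn}; the only computation with any real content is the largeness of $A^{i+1}_*$, and the only conceptual point is seeing why that particular modification is the right one.

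First I would check that $p^{\le i}$ and $p^{> i}$ are genuine conditions. For $p^{\le i}$ the sub-top entries are $u^j_* = \pi_{u^i}(u^j)$; by the Lemmas on $\pi$, $\rho$ and $\sigma$ stated after Definition \ref{sigma}, $\pi_{u^i}(u^j) \in S(\kappa_{u^i}, \lambda_{u^i})$, and since the type-changing convention alters only the zeroth coordinate, $u^j_*$ has the same critical point, length and measures as $u^j$; hence $u^j_* \in \mathcal{U}^{\rm sup}_\infty$ and $(u^j_*, A^j)$ inherits goodness from $(u^j, A^j)$ (note that the upper parts need no transport, as $\kappa_{u^j_*} = \kappa_{u^j}$ and $\lambda_{u^j_*} = \lambda_{u^j}$). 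Because $\pi_{u^i}$ is an order isomorphism it preserves $\prec$, so the $u^j_*$ are $\prec$-increasing and $p^{\le i} \in {\mathbb R}^{\rm sup}_{u^i}$. For $p^{> i}$ everything is inherited from $p$ except that I must check $A^{i+1}_* = \{ w \in A^{i+1} : u^i \prec \rho_{u^{i+1}}(w) \}$ is $u^{i+1}$-large. This is the one genuine computation: using a constructing embedding for $u^{i+1}$, the measures on $u^{i+1}$ concentrate on $w$ whose $\rho_{u^{i+1}}$-image has critical point above $\lambda_{u^i}$ and base set end-extending $u^i(0)$, exactly as a supercompactness measure concentrates on end-extensions of a fixed set; hence $A^{i+1}_* \in u^{i+1}(\beta)$ for all nonzero $\beta < \lh(u^{i+1})$.

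Next I would define the map $\Phi$ whose existence is asserted. Given $q_0 = \langle (w^0, C^0), \ldots, (w^l = u^i, C^l) \rangle \le p^{\le i}$ and $q_1 = \langle (v^{i+1}, D^{i+1}), \ldots, (v^{m'} = u, D^{m'}) \rangle \le p^{> i}$, transport the sub-top entries of $q_0$ back into $S(\kappa_u, \lambda_u)$ via $\rho_{u^i}$, keep the refined entry $(u^i, C^l)$ in place, and append the entries of $q_1$:
\[
   \Phi(q_0, q_1) = \langle \rho_{u^i}(w^0, C^0), \ldots, \rho_{u^i}(w^{l-1}, C^{l-1}), (u^i, C^l), (v^{i+1}, D^{i+1}), \ldots, (u, D^{m'}) \rangle,
\]
where $\rho_{u^i}(w^k, C^k)$ denotes the good pair $(\rho_{u^i}(w^k), C^k)$. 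That this is a condition extending $p$ is a direct check: the transported entries are $\prec$-increasing and each $\prec u^i$ by the Lemmas following Definition \ref{sigma}; that $u^i$ precedes the bottom entry of $q_1$ is exactly the content of the modification defining $A^{i+1}_*$, since any entry interpolated below $u^{i+1}$ in $q_1$ has $\pi_{u^{i+1}}$-image in $A^{i+1}_*$ and hence $\rho_{u^{i+1}}$-preimage $\succ u^i$; and the membership and upper-part requirements of Definition \ref{sccradinextn} for $\Phi(q_0, q_1) \le p$ translate coordinate-by-coordinate into those for $q_0 \le p^{\le i}$ and $q_1 \le p^{> i}$, using $\rho_{u^i} = \pi_{u^i}^{-1}$ and $\sigma_{v^j u^i} = \pi_{u^i} \circ \rho_{v^j}$.

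The map $\Phi$ is injective because its value determines, by comparing critical points with $\kappa_{u^i}$, which entries came from $q_0$ (those with critical point below $\kappa_{u^i}$, together with the $u^i$-entry) and which from $q_1$; applying $\pi_{u^i}$ to the former recovers $q_0$. The same coordinate-wise translation shows $\Phi(q_0, q_1) \le \Phi(q_0', q_1')$ if and only if $q_0 \le q_0'$ and $q_1 \le q_1'$, so $\Phi$ is an isomorphism onto $\rge(\Phi)$. Finally, for density: given any $r \le p$, splitting its entries at $\kappa_{u^i}$ and applying $\pi_{u^i}$ to the lower part reads off a candidate pair $(q_0, q_1)$ with $\Phi(q_0, q_1) = r$, the only obstruction being that the upper part of $r$ at its $u^{i+1}$-entry need not lie inside $A^{i+1}_*$; but as $A^{i+1}_*$ is $u^{i+1}$-large we may shrink it by a direct extension to land inside $A^{i+1}_*$, producing $r' \le r$ with $r' \in \rge(\Phi)$. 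Hence $\rge(\Phi)$ is dense in ${\mathbb R}^{\rm sup}_u \downarrow p$. I expect the main obstacle to be organising this last decomposition cleanly: one must see that the modification defining $A^{i+1}_*$ is precisely what makes the interpolations below $u^{i+1}$ in an arbitrary extension of $p$ correspond to the legal extensions of $p^{> i}$, and that the largeness of $A^{i+1}_*$ is exactly what upgrades ``onto $\rge(\Phi)$'' to density.
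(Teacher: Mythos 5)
Your gluing map is exactly the map $i$ that the paper writes down (the paper's entire proof consists of defining it: transport the sub-top entries of $q_0$ by $\rho_{u^i}$, keep the refined pair $(u^i,C^l)$, append $q_1$), and your verifications that $p^{\le i}$, $p^{>i}$ are conditions, that $A^{i+1}_*$ is $u^{i+1}$-large, and that $\Phi$ is an order-isomorphism onto its range supply details the paper leaves to the reader. However, the density step has a genuine gap: it is not true that the only obstruction to $r \in \rge(\Phi)$ is the upper part attached to the $u^{i+1}$-entry of $r$. Suppose $r \le p$ interpolates a pair $(v,B)$ strictly between $u^i$ and $u^{i+1}$. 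Reading off the candidate $q_1$ from $r$, Definition \ref{sccradinextn} for $q_1 \le p^{>i}$ requires $\sigma_{v u^{i+1}}[B] \subseteq A^{i+1}_*$, whereas $r \le p$ only gives $\sigma_{v u^{i+1}}[B] \subseteq A^{i+1}$. Since $\rho_{u^{i+1}} \circ \sigma_{v u^{i+1}} = \rho_v$, membership of $\sigma_{v u^{i+1}}(x)$ in $A^{i+1}_*$ amounts to $u^i \prec \rho_v(x)$, and this fails for any $x \in B$ with $\kappa_x \le \lambda_{u^i}$; nothing in $r \le p$ excludes such $x$ (for instance when $A^{i+1}$ is the set of all good sequences in $S(\kappa_{u^{i+1}},\lambda_{u^{i+1}})$, legal upper parts $B$ may contain them). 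So shrinking only the upper part at the $u^{i+1}$-entry does not put $r$ into $\rge(\Phi)$, and your $r'$ as described need not be in the range. (By contrast, the membership half of the clause, $\pi_{u^{i+1}}(v) \in A^{i+1}_*$, is automatic: the entries of $r$ are $\prec$-increasing and $\prec$ is transitive, so $u^i \prec v = \rho_{u^{i+1}}\bigl(\pi_{u^{i+1}}(v)\bigr)$.)

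The gap is local and is repaired by the same computation you used for the largeness of $A^{i+1}_*$: for each pair $(v,B)$ of $r$ interpolated between $u^i$ and $u^{i+1}$, since $u^i \prec v$ and $v$ is a good sequence, the set $\{ x \in S(\kappa_v,\lambda_v) : u^i \prec \rho_v(x) \}$ is $v$-large, by the constructing-embedding argument of Lemma \ref{addability1} (if $k$ constructs $v$, then $\rho_{k(v)}(u_k \restriction \beta)$ has zeroth coordinate $k[v(0)] \supseteq k(u^i)(0)$ and critical point $\kappa_v$, so $k(u^i) \prec \rho_{k(v)}(u_k \restriction \beta)$). Intersecting each such $B$ with this set, and intersecting the upper part at the $u^{i+1}$-entry with $A^{i+1}_*$, produces a direct extension $r' \le^* r$ which does lie in $\rge(\Phi)$; density then follows. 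With this correction your proof is complete and agrees with the paper's argument.
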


\begin{proof}
 Let $q_0 \le p^{\le i}$ and $q_1 \le p^{>i}$,
 say $q_0 = \langle (v^0, B^0), \ldots, (v^j = u^i, B^j) \rangle$ and
 $q_1 = \langle (v^{j+1}, B^{j+1}), \ldots (v^n = u, B^n) \rangle$.
 Define $i(q_0, q_1)  = \langle  (v^j_*, B^j) : 0 \le j \le n \rangle$
  where $v^k_* = \rho_{u^i}(v^k)$ for all $k < j$, $v^k_* = v^k$ for all $k \ge j$.
\end{proof}

\begin{lemma} \label{addability2}
 Let $u \in {\mathcal U}^{\rm sup}_\infty$ with  $\lh(u) > 1$, and
  let
\[
     p = \langle (u^0, A^0), \ldots, (u^i, A^i), \ldots, (u^m=u, A^m) \rangle
\]
  be a condition in
  ${\mathbb R}^{\rm sup}_{u}$. Let $i  < m$ with $\lh(u^i) > 1$,
   and let $Y$ be the set of sequences $v$ with the following property: there exists  a
   set $B$ such that $(v, B)$ is a good pair, and if we set
\[
    q = \langle (u^0, A^0), \ldots,  (u^{i-1}, A^{i-1}), (v, B),  (u^i, A^i), \ldots, (u^{m-1}, A^{m-1}),  (u^m = u, A^m) \rangle
\]
  then $q$ is a condition extending $p$. Then $\pi_{u^i}[Y]$ is $u^i$-large.
\end{lemma}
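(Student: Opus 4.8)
The plan is to reduce the claim to the top-entry case already settled in Lemma \ref{addability1}, by first applying the factoring Lemma \ref{sccfactor} at the index $i$. This yields conditions $p^{\le i} \in {\mathbb R}^{\rm sup}_{u^i}$ and $p^{> i} \in {\mathbb R}^{\rm sup}_{u}$ together with the factoring isomorphism
\[
    {\mathbb R}^{\rm sup}_{u^i} \downarrow p^{\le i} \times {\mathbb R}^{\rm sup}_{u} \downarrow p^{> i} \to {\mathbb R}^{\rm sup}_u
\]
onto a dense subset of ${\mathbb R}^{\rm sup}_u \downarrow p$. The whole point of the factoring is that the interior entry $u^i$ of $p$ becomes the \emph{top} entry of the left-hand factor $p^{\le i}$, while each lower entry $u^j$ with $j < i$ becomes its type-changed copy $u^j_* = \pi_{u^i}(u^j)$. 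Interpolating a new good pair immediately below $(u^i, A^i)$ in $p$ therefore corresponds, under this isomorphism, to interpolating a new good pair immediately below the top entry of $p^{\le i}$.

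Next I would apply Lemma \ref{addability1} to the condition $p^{\le i}$, with $u^i$ now playing the role of the measure sequence $u$ of that lemma; this is legitimate since $u^i \in {\mathcal U}^{\rm sup}_\infty$ and $\lh(u^i) > 1$ by hypothesis. Lemma \ref{addability1} then produces a $u^i$-large set $X^{\le i}$ consisting of exactly those sequences $v'$ for which some good pair $(v', B)$ can be slotted in between $(u^{i-1}_*, A^{i-1})$ and $(u^i, A^i)$ to give a condition extending $p^{\le i}$.

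It remains to transport $X^{\le i}$ back through the factoring isomorphism. Unwinding the definition from the proof of Lemma \ref{sccfactor}, the entries strictly below the top of the left factor are acted on by $\rho_{u^i}$, so an interpolant $v' \in S(\kappa_{u^i}, \lambda_{u^i})$ is carried to $\rho_{u^i}(v')$ in the global space; since $\pi_{u^i}$ and $\rho_{u^i}$ are mutually inverse order isomorphisms, the corresponding global interpolant is $v = \rho_{u^i}(v')$, so that $v' = \pi_{u^i}(v)$. Using the two Lemmas on $\rho_v$ and $\sigma_{wv}$ to check that $(v', B)$ being a good pair locally is equivalent to $(\rho_{u^i}(v'), B)$ being a good pair globally, and that the $\prec$-relations and the membership clauses of Definition \ref{sccradinextn} match across the type change, one obtains $X^{\le i} \subseteq \pi_{u^i}[Y]$. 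As $\pi_{u^i}[Y] \subseteq S(\kappa_{u^i}, \lambda_{u^i})$ contains the $u^i$-large set $X^{\le i}$, and $u^i$-largeness is preserved under passing to supersets inside $S(\kappa_{u^i}, \lambda_{u^i})$, it follows that $\pi_{u^i}[Y]$ is $u^i$-large, as required.

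The main obstacle is purely bookkeeping: carefully matching the interpolation conditions of Definition \ref{sccradinextn} across the type change induced by $\pi_{u^i}$ and $\rho_{u^i}$, and in particular verifying that the upper part $B$ together with its images under the relevant $\sigma$-maps satisfies the same largeness and inclusion requirements on both sides of the isomorphism. None of this is conceptually hard once the factoring is in place, but it must be carried out with the definitions of $\pi_v$, $\rho_v$ and $\sigma_{wv}$ fully in hand.
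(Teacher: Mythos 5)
Your proposal is correct and takes essentially the same approach as the paper's proof: factor the condition at index $i$ via Lemma \ref{sccfactor}, apply Lemma \ref{addability1} (with $u^i$ as the top sequence) to $p^{\le i}$ in ${\mathbb R}^{\rm sup}_{u^i}$, and pull the resulting $u^i$-large set of interpolants back to $p$ via $\rho_{u^i}$. The paper's version is simply terser, leaving the type-changing bookkeeping implicit.
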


\begin{proof}
 As we should expect from Lemma \ref{sccfactor}, we prove this by
   considering addability in ${\mathbb R}^{\rm sup}_{u^i}$. To be more precise,
   by the argument of Lemma \ref{addability1} there are $u^i$-many
   $\bar v \in A^i$ such that some
   $(\bar v, B)$ can be interpolated between
   $(\pi_{u^i}(u^{i-1}), A^{i-1})$ and $(u^i, A^i)$ in $p^{\le i}$.
   For each such $\bar v$ and $B$, if we set $v = \rho_{u^i}(\bar v)$ then
   $(v, B)$ can be interpolated between
   $(u^{i-1}, A^{i-1})$ and $(u^i, A^i)$ in $p$.
\end{proof}

\begin{definition} \label{sccradinappears}
 Given $p \in {\mathbb R}^{\rm sup}_{u}$,
 $p=\langle (u^0, A^0), \ldots, (u^i, A^i), \ldots, (u^n=u, A^n) \rangle$ and $w \in \mathcal{U}^{\rm sup}_{\infty}$, we say $w$
  {\em appears in $p$} if and only if $w=u^i$ for some $i<n$.
\end{definition}

 Very much as for the Radin forcing ${\mathbb R}_u$ discussed in Section \ref{vanillaradin},
  the generic object for the supercompact Radin forcing ${\mathbb R}^{\rm sup}_u$  can be viewed as a sequence
  $\langle v_\alpha : \alpha < \mu \rangle$ where $\langle v_\alpha(0) : \alpha < \mu \rangle$ is a continuous
  $\prec$-increasing sequence with union $\lambda$. As before, the condition $p$ above carries the information that each $v_\alpha$
  either appears among the $u^i$ or is the first entry in a pair which can legally be added to $p$.

   The main technical fact that we will need is the Prikry lemma for supercompact Radin forcing, which states
   (as usual) that every question about the generic extension can be decided by a direct extension.
   Before giving the proof of the Prikry lemma, we need a suitable version of normality for good measure sequences.

\begin{definition}
   Let $s$ be a stem for ${\mathbb R}^{\rm sup}_u$,
  say $s = \langle (u^0, A^0), \ldots (u^{m-1}, A^{m-1}) \rangle$,
 and let $w \in S(\kappa_u, \lambda_u)$. Then
 we define $s \prec w$ if and only if either $s$ is empty or
 $u^{m-1} \prec w$.
\end{definition}

\begin{definition}
  Let $L$ be a set of stems for ${\mathbb R}^{\rm sup}_u$, and let
  $\langle A_s : s \in L \rangle$ be an $L$-indexed family of subsets of
  $S(\kappa_u, \lambda_u)$. Then the {\em diagonal intersection} of the family
  $\Delta_s A_s$ is defined to be $\{ w : \forall s \prec w \; w \in A_s \}$.
\end{definition}

The following lemma is a form of normality for the measures on a good measure sequence.

\begin{lemma} \label{sccdiagintlemma}  Let $u \in {\mathcal U}^{\rm sup}_\infty$ with $\lh(u) > 1$ and let $0 < \beta < \lh(u)$.
    Let $L$ be a set of lower parts for ${\mathbb R}^{\rm sup}_u$ and let $\langle A_s: s \in L \rangle$ be a sequence such that
    $A_s \in u(\beta)$ for all $s$.
  Then  $\Delta_s B_s = \{ w  : \forall s \prec w \; w \in B_s \} \in u(\beta)$.
\end{lemma}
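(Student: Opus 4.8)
The plan is to prove the normality statement by pulling everything back through a constructing embedding, reducing the largeness of the diagonal intersection to the hypothesis $A_s \in u(\beta)$ (I write $A_s$ for the family; the $B_s$ in the displayed conclusion denotes the same sequence). Fix a constructing embedding $j : V \to M$ for $u$, which exists since $u \in \mathcal{U}^{\rm sup}_\infty \subseteq \mathcal{U}^{\rm sup}_0$, and write $\kappa = \kappa_u$, $\lambda = \lambda_u$. Recall that $u_j(0) = j[\lambda]$ and that $u(\gamma) = u_j(\gamma)$ for all $\gamma$ with $0 < \gamma < \lh(u)$, so that $u(\beta) = u_j(\beta) = \{ X : u_j \restriction \beta \in j(X) \}$. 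Put $w^* = u_j \restriction \beta$, and note exactly as in the proof of Lemma \ref{addability1} that $w^*(0) = j[\lambda]$, so that $\kappa_{w^*} = \kappa$ and $\lambda_{w^*} = \lambda$. Since $\Delta_s A_s \in u(\beta)$ is equivalent to $w^* \in j(\Delta_s A_s)$, it suffices to establish the latter.

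By elementarity, $j(\Delta_s A_s)$ is, in $M$, the set of $w$ such that $w \in j(\vec A)(s')$ for every stem $s' \in j(L)$ with $s' \prec w$, where $\vec A = \langle A_s : s \in L \rangle$. Hence I must show that $w^* \in j(\vec A)(s')$ for every $s' \in j(L)$ with $s' \prec w^*$. The crux of the argument is the claim that any stem $s'$ in $M$ with $s' \prec w^*$ is of the form $j(s)$ for a (unique) stem $s$ in $V$. Granting this, if $s' \in j(L)$ then $s \in L$ by elementarity, whence $j(\vec A)(s') = j(\vec A)(j(s)) = j(A_s)$; and since $A_s \in u(\beta) = u_j(\beta)$, the defining property of $u_j(\beta)$ gives $w^* = u_j \restriction \beta \in j(A_s)$, as needed.

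It remains to prove the claim, which is where the real work lies. Let $s' = \langle (v^0, B^0), \ldots, (v^{k-1}, B^{k-1}) \rangle$ with $s' \prec w^*$. Because the first coordinates are $\prec$-increasing with $v^{k-1} \prec w^*$, each $v^i$ satisfies $v^i(0) \subseteq w^*(0) = j[\lambda]$ with $\ot(v^i(0)) < \kappa$, $\lh(v^i) < \kappa$, and $v^i(\gamma) \in V_\kappa$ for $0 < \gamma < \lh(v^i)$. Since $\crit(j) = \kappa$, the embedding $j$ fixes $V_\kappa$ pointwise, so $j(v^i(\gamma)) = v^i(\gamma)$; and since $|v^i(0)| < \kappa = \crit(j)$ while $v^i(0) \subseteq \rge(j)$, setting $a^i = j^{-1}[v^i(0)] \subseteq \lambda$ gives $j(a^i) = j[a^i] = v^i(0)$. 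As $\lh(v^i) < \kappa$, we conclude $v^i = j(w^i)$ for $w^i = \langle a^i \rangle^\frown \langle v^i(\gamma) : 0 < \gamma < \lh(v^i) \rangle \in S(\kappa, \lambda)$. The sets $B^i \subseteq S(\kappa_{v^i}, \lambda_{v^i})$ lie in $V_\kappa$ (because $\kappa_{v^i}, \lambda_{v^i} < \kappa$ force $S(\kappa_{v^i}, \lambda_{v^i}) \in V_\kappa$), so $j(B^i) = B^i$ and each good pair satisfies $(v^i, B^i) = j((w^i, B^i))$. Thus $s' = j(s)$ for the stem $s = \langle (w^0, B^0), \ldots, (w^{k-1}, B^{k-1}) \rangle$, proving the claim and with it the lemma.

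The main obstacle is precisely this claim, that every stem lying $\prec$-below the seed $w^*$ is an image $j(s)$ of a genuine stem of $V$. It rests on the choice $u_j(0) = j[\lambda]$, so that the seed ``sees'' exactly the ordinals in the range of $j$, together with the observation that the entries of a stem are small objects — subsets of $j[\lambda]$ of size below $\crit(j)$ and measures lying in $V_\kappa$ — which are therefore either fixed by $j$ or captured as $j$ of a ground-model object. Once that structural fact is in hand, the remainder is a routine application of elementarity and of the definition $u(\beta) = u_j(\beta)$.
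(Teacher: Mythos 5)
Your proof is correct and takes essentially the same route as the paper's: fix a constructing embedding $j$, show that every stem lying $\prec$-below the seed $u_j \restriction \beta$ is the $j$-image of a ground-model stem, and conclude $u_j \restriction \beta \in j(\Delta_s A_s)$ by elementarity and the identity $u(\beta) = u_j(\beta)$. The paper states the pull-back fact only for measure sequences and calls the rest ``routine''; you supply exactly those details (extending the pull-back to full stems by noting the upper parts lie in $V_\kappa$, hence are fixed by $j$), so there is no substantive difference in approach.
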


\begin{proof}
    As we remarked in the course of proving Lemma \ref{addability1}, if $j$ is constructing for $u$
   then $j(w) \prec u_j \restriction \beta$ for every $w \in S(\kappa_u, \lambda_u)$. It
   is easy to check that the converse also holds: if $v \in j(S(\kappa_u, \lambda_u))$
   and $v \prec u_j \restriction \beta$ then $v = j(w)$ for some $w \in S(\kappa_u, \lambda_u)$.
   It is now routine to verify that $u_j \restriction \beta \in j(\Delta_s B_s)$, so that
   $\Delta_s B_s \in u(\beta)$.
\end{proof}

\begin{corollary}
   The diagonal intersection of a family of $u$-large sets is $u$-large.
\end{corollary}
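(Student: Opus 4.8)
The plan is to derive this immediately from Lemma \ref{sccdiagintlemma} by applying it separately to each measure appearing in $u$. The key observation is that the diagonal intersection $\Delta_s A_s = \{ w : \forall s \prec w \; w \in A_s \}$ is a single set whose definition makes no reference to any particular index $\beta$; it depends only on the family $\langle A_s : s \in L \rangle$ and on the ordering $\prec$. Consequently the largeness of $\Delta_s A_s$ can be verified one measure at a time, and the notion of being $u$-large is nothing more than membership in $u(\beta)$ simultaneously for every $\beta$ with $0 < \beta < \lh(u)$.

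First I would fix an arbitrary $\beta$ with $0 < \beta < \lh(u)$. Since each member of the family is $u$-large, by definition $A_s \in u(\beta)$ for every $s \in L$. Thus the family $\langle A_s : s \in L \rangle$ satisfies the hypothesis of Lemma \ref{sccdiagintlemma} for this particular value of $\beta$, and the lemma yields $\Delta_s A_s \in u(\beta)$. As $\beta$ was an arbitrary non-zero ordinal below $\lh(u)$, this shows $\Delta_s A_s \in u(\beta)$ for all such $\beta$, which is precisely the assertion that $\Delta_s A_s$ is $u$-large.

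There is essentially no obstacle here: the entire content of the corollary is already packaged into Lemma \ref{sccdiagintlemma}, and the passage from ``large with respect to the single measure $u(\beta)$'' to ``$u$-large'' is just the trivial exchange of quantifiers made available by the fact that the diagonal intersection is formed uniformly in $\beta$. The only point one must be careful about is that the same set $\Delta_s A_s$ works for every $\beta$, which is immediate from its definition.
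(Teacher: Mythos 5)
Your proof is correct and is exactly the argument the paper intends: the corollary is stated without proof precisely because it follows from Lemma \ref{sccdiagintlemma} by applying it to each measure $u(\beta)$ separately, noting that the diagonal intersection is the same set regardless of $\beta$. Nothing is missing.
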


  The proof of the Prikry lemma follows the usual template for proving such lemmas. We include it
  to make this paper more self-contained and
  to confirm that our definition of {\em good measure sequence} causes no problems.
\begin{lemma} \label{sccprikrylemma}
  Let $\phi$ be a sentence of the forcing language and let $p \in {\mathbb R}^{\rm sup}_u$. Then there is
  $q \le^* p$ such that $q$ decides $\phi$.
\end{lemma}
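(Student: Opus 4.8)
The plan is to argue by induction on the inaccessible cardinal $\kappa_u$, taking as induction hypothesis that the Prikry lemma holds for every supercompact Radin forcing ${\mathbb R}^{\rm sup}_w$ with $\kappa_w < \kappa_u$. This is the natural parameter on which to induct: the measures in $u$ concentrate on $\mathcal{U}^{\rm sup}_\infty \cap S(\kappa_u,\lambda_u)$, so every point $v$ that can be interpolated into a condition has $\kappa_v < \kappa_u$, and whenever $\lh(v)>1$ the segment of the generic lying below $v$ is itself generic for the local forcing ${\mathbb R}^{\rm sup}_v$. Deciding $\phi$ after such an interpolation will therefore require the Prikry property for ${\mathbb R}^{\rm sup}_v$, which the induction hypothesis supplies.

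First I would reduce to the case of an empty stem. Given $p=\langle(u^0,A^0),\dots,(u^n=u,A^n)\rangle$, I apply the factoring Lemma \ref{sccfactor} at the top stem index $i=n-1$ (entries of length one carry no block and may be ignored), writing ${\mathbb R}^{\rm sup}_u\downarrow p$ as a dense product ${\mathbb R}^{\rm sup}_{u^{n-1}}\downarrow p^{\le n-1}\times{\mathbb R}^{\rm sup}_u\downarrow p^{>n-1}$. Since $\kappa_{u^{n-1}}<\kappa_u$, the induction hypothesis gives the Prikry property for the left factor, and a direct extension of $p$ corresponds to a pair of direct extensions under the isomorphism; so using the induction hypothesis to absorb all information carried below the stem entries, I am left to decide $\phi$ for a condition $p=\langle(u,A)\rangle$ with empty stem.

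For such a $p$ the heart of the argument is a uniformization. For each $v\in A$ I would classify whether $\phi$ can be decided after interpolating $v$ just below the top: using Corollary \ref{addability1point5} I first shrink $A$ so that every $v\in A$ is legally addable, and for each such $v$ I invoke the induction hypothesis for the local forcing ${\mathbb R}^{\rm sup}_v$ to reduce, by a direct extension, all of the information carried below $v$, assigning a ``colour'' $c(v)\in\{+,-,0\}$ according to whether the resulting one-point extension forces $\phi$, forces $\neg\phi$, or decides nothing. Since each $u(\beta)$ is an ultrafilter, exactly one colour is $u$-large; passing to a diagonal intersection via the normality Lemma \ref{sccdiagintlemma}, so that the colour is read off uniformly over all admissible insertion positions and all choices of block below $v$, yields a direct extension $p^*=\langle(u,A^*)\rangle$ on which the colour is constant and coherent.

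It then remains to show that $p^*$ itself decides $\phi$, and this ``peeling'' step is where I expect the main difficulty. If $p^*$ did not decide $\phi$, I would take some $q\le p^*$ forcing, say, $\phi$ and adding the least possible number $k\ge 1$ of new points; homogeneity of the colour forces $c=+$ on the relevant blocks. The goal is to remove the top-most added point $v$ without disturbing the forcing of $\phi$: applying the induction hypothesis to ${\mathbb R}^{\rm sup}_v$ lets me assume the block below $v$ already decides everything relevant by a direct extension, and then the uniformity of the colour across insertion positions (the output of Lemma \ref{sccdiagintlemma}) should let me absorb $v$ back into the upper part $A^*$, producing $q'\le^* p^*$ still forcing $\phi$ with only $k-1$ added points, contradicting minimality unless $k=0$, i.e. unless $p^*$ already decides $\phi$. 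The delicate point, and the main obstacle, is precisely this absorption: one must verify that neither the identity of the removed point $v$ nor the ${\mathbb R}^{\rm sup}_v$-generic beneath it actually influences $\phi$, so that the normality-driven uniformization genuinely commutes with the finiteness-of-stems induction.
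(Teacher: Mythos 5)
Your overall skeleton (reduce to empty stems via Lemma \ref{sccfactor}, colour potential interpolation points by whether they decide $\phi$, uniformize by normality, then run a minimal-counterexample argument) matches the paper's, but the two places where the real work happens are respectively wrong and missing. First, the uniformization step contains an error: it is not true that ``exactly one colour is $u$-large''. Each individual measure $u(\beta)$ picks out exactly one colour class, but distinct measures on the sequence typically concentrate on quite different sets (for instance on measure sequences of different lengths), so in general no single colour class lies in every $u(\beta)$, and there is no direct extension on which the colour is literally constant. The paper instead works measure-by-measure: for each $\alpha$ it keeps the class $B^\alpha_s$ that is $u(\alpha)$-large, diagonally intersects over stems, and passes to the union $A_2=\bigcup_\alpha B^\alpha$; the later argument then isolates a \emph{single} $\alpha$ for which the ``forces $\phi$'' class is $u(\alpha)$-large. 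Everything you do afterwards presupposes a globally constant colour, so it would have to be reorganized around this per-measure bookkeeping.

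Second, and more seriously, the absorption step that you yourself flag as ``the main obstacle'' is precisely the content of the paper's proof, and nothing in your proposal supplies it. Knowing that $u(\alpha)$-many $v$ admit $f(v)$ with $\langle (v,f(v)),(u,A_2)\rangle\Vdash\phi$ does not by itself let you delete the interpolated point: you must exhibit a single $u$-large set $A_3$ such that \emph{every} extension of $\langle (u,A_3)\rangle$ is compatible with some $\langle (v,f(v)),(u,A_2)\rangle$, and an arbitrary extension may interpolate several new points at arbitrary positions relative to any candidate $v$. The paper builds $A_3=X\cup Y\cup Z$, where $Y$ is a diagonal intersection of good points, $X$ consists of points that can be slid below a chosen $v\in Y$ compatibly with $f(v)$, and $Z$ consists of points whose own measures concentrate on pullbacks of $Y$, so that a $Y$-point can be interpolated below them via Lemma \ref{addability2}; largeness of $X$ and $Z$ is proved by reflection through the constructing embedding, and a three-case analysis of an arbitrary extension of $\langle(u,A_3)\rangle$ then performs the absorption. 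Note also that the paper needs no induction on $\kappa_u$ and no Prikry property for the local forcings ${\mathbb R}^{\rm sup}_v$: the colouring is defined existentially (is there a $v$-large $B$ making the one-point extension decide $\phi$?), and in the stem-reduction step what is actually used is the empty-stem case for the \emph{top} factor together with $\kappa_u$-completeness of the measures in $u$ (to decide, for each of the fewer than $\kappa_u$ conditions in the small bottom factor, what it forces about the name), since the Prikry property of the bottom factor alone cannot decide a sentence about the product.
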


\begin{proof} We start by reducing to the case when the stem of the condition $p$ is empty.
 To do this assume that we have the Prikry lemma for conditions with empty stems, let
 $p = \langle (u^0, A^0), \ldots, (u^i, A^i), \ldots, (u^n=u, A^n) \rangle$ with $n > 0$, and
 use Lemma \ref{sccfactor} with $m = n - 1$ to view the truth value of $\phi$ (which is an ${\mathbb R}^{\rm sup}_u \downarrow p$-name
 for an element of $2$)  as a ${\mathbb R}^{\rm sup}_{u} \downarrow p^{> n-1}$-name for a
  ${\mathbb R}^{\rm sup}_{u^{n-1}} \downarrow p^{\le n-1}$-name for an element
 of $2$. Since $\lambda_{u^{n-1}} < \kappa_u$ and $p^{> n-1}$ has an empty stem, we may shrink the measure one
 set in $p^{> n-1}$ to determine the value of
 this  ${\mathbb R}^{\rm sup}_{u^{n-1}}/p^{\le n-1}$-name; working downwards and repeating the argument a further $n$ times, we end with the
 conclusion of the Prikry lemma.

 So we let $p=\langle (u, A) \rangle$. Let $I$ be the set of stems $s$ such that there is $B \subseteq A$
 with $s^\frown \langle (u, B) \rangle$ deciding $\phi$, and for each $s \in I$ let $A_s$ be such a set $B$.
 Let $A_1 = \Delta_{s \in I} A_s$, then easily $A_1$ has the following property: for every stem $s$, if there
 exists $B \subseteq A$ such that $s^\frown \langle (u, B) \rangle$ decides $\phi$, then $s \frown (u, A_1)$ decides
 $\phi$.

 Now for each stem $s$, we partition $A_1$ into three parts: $A^s_{1,0}$ is the set of $w \in A_1$ such that for some
 $w$-large $B \subseteq V_{\kappa_w}$
\[
     s^\frown \langle (w, B) \rangle^\frown \langle (u, A_1) \rangle \Vdash \phi,
\]
  $A^s_{1, 1}$ is the set of $w \in A_1$ such that for some $w$-large $B$ the condition
\[
     s^\frown \langle (w, B) \rangle^\frown \langle (u, A_1) \rangle \Vdash \neg\phi,
\]
 and $A^s_{1, 2}$ is the remainder of $A_1$.
 For each $\alpha$ with $0 < \alpha < \lh(u)$,
  let $B^\alpha_s$ be $A^s_{1, n}$ for the unique $n$ such that $A^s_{1, n} \in u(\alpha)$,
 let $B^\alpha = \Delta_s B^\alpha_s$, and let $A_2 = \bigcup_\alpha B^\alpha$.
 By construction $A_2$ has the following property: if
\[
    s^\frown \langle (w, B) \rangle^\frown \langle (u, C) \rangle \le s^\frown \langle (u, A_2) \rangle
\]
   is a condition which  forces $\phi$, then there exists $\alpha$ such that
 \[
     \{ w' : \exists B' \;  \mbox{$s^\frown \langle (w', B') \rangle^\frown \langle (u, A_1) \rangle$ forces $\phi$} \}
\]
 is $u(\alpha)$-large, and similarly for $\neg \phi$.

 To finish the argument, we fix a condition $t^\frown \langle (u, C) \rangle \le \langle (u, A_2) \rangle$ which decides
 $\phi$ with $\lh(t)$ minimal, and argue that $t$ must be empty. If not
 let $t = s^\frown \langle (w, B) \rangle$, and assume (without loss of generality) that
 $s^\frown \langle (w,  B) \rangle^\frown \langle (u, C) \rangle$ forces $\phi$.
 By construction we find $\alpha$ and a function $f$ with $\dom(f) \in u(\alpha)$
 such that
 $s^\frown \langle (v,  f(v)) \rangle \frown \langle (u, A_2) \rangle$ forces $\phi$ for every $v \in \dom(f)$.

 We will now construct a set $A_3 \subseteq A_2$ such that every extension
 of $s^\frown \langle (u, A_3) \rangle$ is compatible with some condition of the form
 $s^\frown \langle (v, f(v)) \rangle^\frown \langle (u, A_2) \rangle$. This property implies that
$s^\frown \langle (u, A_3) \rangle$ forces $\phi$, contradicting the minimal choice of $\lh(t)$.
   We note that by the definition of extension in the forcing ${\mathbb R}^{\rm sup}_u$,
  we may assume from this point on that $s$ is empty.

We define various subsets of $A_2$:
\begin{itemize}

\item  For $v' \in A_2$,  $Y(v')$ is the set of $v \in \dom(f) \cap A_2$ such that
 $v' \prec v$, $\pi_v(v') \in f(v)$,  and $\{ x \in S(\kappa_{v'}, \lambda_{v'}) : \sigma_{v' v}(x) \in f(v) \}$
  is $v'$-large.

\item $X$ is the set of $v' \in A_2$ such that $Y(v')$ is $u(\alpha)$-large.

\item  $Y$ is the diagonal intersection  $\Delta_{v' \in X} Y(v')$.

\item $Z$ is the set of $w \in A_2$ such that
  $\{ x \in S(\kappa_w, \lambda_w) : \rho_w(x) \in Y \}$ is $w(\beta)$-large for some
  $\beta$ with $0 < \beta < \lh(w)$.

\end{itemize}

  Let $A_3 = X \cup Y \cup Z$. We will verify  that $A_3$ is $u$-large.

\medskip

\begin{claim}  $X$ is $u \restriction \alpha$-large.
\end{claim}

\begin{proof} Let $j: V \rightarrow M$ be constructing for $u$, let $B = j(f)(u_j \restriction \alpha)$.
 Applying Lemma \ref{addability2} in $M$ to the condition $\langle (u_j \restriction \alpha, B), (j(u), j(A_2)) \rangle$
 we obtain exactly the conclusion that there are $u \restriction \alpha$-many $v'$ such that $u_j \restriction \alpha \in j(Y(v'))$.
\end{proof}

\begin{claim}
 $Y$ is $u(\alpha)$-large.
\end{claim}

\begin{proof} Immediate by Lemma \ref{sccdiagintlemma}.
\end{proof}

\begin{claim} $Z$ is $u(\beta)$-large for all $\beta$ with $\alpha < \beta < \lh(w)$.
\end{claim}

\begin{proof}: We verify that $u_j \restriction \beta \in j(Z)$.
    As we saw in the proof of Lemma \ref{addability1}, $\rho_{u_j \restriction \beta} = j \restriction \lambda_u$,
 and $\rho_{u_j \restriction \beta}(x) = j(x)$ for $x \in S(\kappa_u, \lambda_u)$, so that
 $\{ x \in S(\kappa_u, \lambda_u) : \rho_{u_j \restriction \beta}(x) \in j(Y) \} = Y$. Since $\alpha < \beta$ and $Y$ is
 $u(\alpha)$-large, we are done.
\end{proof}

 Now consider an arbitrary extension
\[
     q = \langle (u^0, A^0), \ldots (u^{m-1}, A^{m-1}), (u^m=u, A^m) \rangle
\]
 of $\langle (u, A_3) \rangle$. There are various cases: the first case is the most important one,
 in that we handle the other cases by making a further extension to get into the first case.

\begin{enumerate}

\item There exists $j$ such that $u^j \in Y$ and $u^i \in X$ for all $i < j$. In this case we can readily verify that
 the condition $\langle (u^j, f(u^j)), (u, A_2) \rangle$ is compatible with $q$: the main point to check is that
 each pair $(u^i, A^i \cap f(u^j))$ for $i < j$ can legally be added below $(u^j, f(u^j))$, which
 is immediate from the definitions of $X$ and $Y$.

\item For the least $j$ such that $u^j \notin X$, $u^j \in Z$. In this case by Lemma \ref{addability2}
 we can interpolate some sequence in $Y$ between $u^{j-1}$ and $u^j$, and reduce to the first case.

\item $u^j \in X$ for all $j$. In this case we can interpolate some sequence in $Y$
   between $u^{m-1}$ and $u$, and again reduce to the first case.

\end{enumerate}

\end{proof}

  The next result collects some useful information about the extension by ${\mathbb R}^{\rm sup}_u$
  where $u$ is a good $(\kappa, \lambda)$-measure sequence.

\begin{lemma} \label{sccportmanteau}
 Let $u \in {\mathcal U}^{\rm sup}_\infty$ be a  $(\kappa, \lambda)$-measure sequence with $\lh(u) > 1$,
 and let $G$ be ${\mathbb R}^{\rm sup}_u$-generic over $V$.
The following hold in $V[G]$:

\begin{enumerate}

\item Let $\vec w =\langle w_\alpha : \alpha < \mu \rangle$ enumerate
   $\{w: w$ appears in some $p \in G \}$, so  that we have $w_\alpha \prec w_\beta$ for all
  $\alpha, \beta$ with $\alpha < \beta < \mu$.
 Then $\mu$ is a limit ordinal with $\mu \le \kappa_u$,  and  $V[G]=V[\vec w ]$.

\item  Let $\vec C=\langle  w_\alpha(0) : \alpha < \mu  \rangle$.  Then $\vec C$
is a $\prec$-increasing and continuous sequence in $P_{\kappa_u}(\lambda_u)$.
 Furthermore if $\lh(u) \geq \kappa_u$, then $\mu = \kappa_u$.

\item  For each $\alpha < \mu$,  $\kappa_{w_\alpha} < \lambda_{w_\alpha} < \kappa_{w_{\alpha + 1}}$.

\item $\lambda = \bigcup_{\alpha < \mu} w_\alpha(0)$, in particular if $\lambda > \kappa$ then $\lambda$ is collapsed.

\item If we let $D = \{ \kappa_{w_\alpha(0)} : \alpha < \mu \}$ then $D$ is a club subset of $\kappa$.

\item For every limit ordinal $\beta < \mu$, $\langle w^*_\alpha : \alpha < \beta  \rangle$ is ${\mathbb R}^{\rm sup}_{w_\beta}$-generic over $V$,
  where $w^*_\alpha=\pi_{w_\beta}(w_\alpha)$, and $\vec{w} \restriction [\beta, \mu)$ is
  ${\mathbb R}^{\rm sup}_u$-generic over $V[\langle w^*_\alpha: \alpha < \beta  \rangle]$.

\item For every  $\gamma < \kappa$ and  $A \subseteq \gamma$ with $A \in V[\vec{w}]$, $A \in V[\langle w^*_\alpha: \alpha < \beta  \rangle]$.
   where $\beta < \mu$ is the largest ordinal such that $\kappa_{w_\beta} \le \gamma$.

\item  If $\gamma$ is a cardinal in $V$ such that $\kappa_{w_\beta} < \gamma \le \lambda_{w_\beta}$ for some limit $\beta < \mu$,
    then $\gamma$ is collapsed and in fact $V[G] \models \vert \gamma \vert = \kappa_{w_\beta}$. Other
   cardinals are preserved.

\end{enumerate}

\end{lemma}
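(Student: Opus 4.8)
The plan is to prove all eight clauses by simultaneous induction on $\kappa_u$, so that whenever the argument descends to a \emph{local} forcing $\mathbb{R}^{\rm sup}_{w_\beta}$ with $\kappa_{w_\beta} < \kappa_u$ we may invoke the full statement of the lemma as an induction hypothesis. Write $\kappa = \kappa_u$ and $\lambda = \lambda_u$ throughout. The elementary clauses come first. The enumeration $\vec w$ is well defined and $\prec$-increasing because clause (4) of Definition \ref{sccradincdn} forces the entries of a single condition to be $\prec$-increasing and $\prec$ is transitive, so any two entries appearing in compatible conditions are $\prec$-comparable; this gives the ``$w_\alpha \prec w_\beta$ for $\alpha<\beta$'' of clause (1). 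Each $w_\alpha \in S(\kappa,\lambda)$ has $\kappa_{w_\alpha} < \kappa$, and $w_\alpha \prec w_{\alpha+1}$ gives $\lambda_{w_\alpha} < \kappa_{w_{\alpha+1}}$ by Definition \ref{precforseq}; together with the observation that (for $\lambda > \kappa$) the measures concentrate on $x$ with $\kappa_x < \lambda_x$, witnessed by $\ot(j[\lambda]) = \lambda > \kappa = j[\lambda]\cap j(\kappa)$ for a constructing $j$, this yields clause (3) and shows $\langle \kappa_{w_\alpha} : \alpha<\mu\rangle$ is strictly increasing below $\kappa$, so $\mu \le \kappa$. That $\mu$ is a limit is immediate from Lemma \ref{addability1}, since below any condition the sequences interpolable just under the top entry form a $u$-large, hence nonempty, set, so the enumeration has no last element. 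Finally $V[G] = V[\vec w]$ because a condition lies in $G$ exactly when each of its entries appears in $\vec w$ and every $w_\alpha$ lying $\prec$-between two consecutive entries (or below the least) meets the interpolation requirement of Definition \ref{sccradinextn}, a condition decidable from $\vec w$ and the ground-model sets $A^i$.

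Clause (4) is the one genericity computation I would carry out by hand. Fix $\xi<\lambda$. The set $\{v \in S(\kappa,\lambda) : \xi \in v(0)\}$ is $u$-large, since for a constructing embedding $j$ and $0<\beta<\lh(u)$ we have $u_j\restriction\beta \in j(\{v : \xi\in v(0)\})$ iff $j(\xi) \in (u_j\restriction\beta)(0) = u_j(0) = j[\lambda]$, and $j(\xi)\in j[\lambda]$ because $\xi<\lambda$. By Lemma \ref{addability1} such a $v$ can always be interpolated below the top entry, so $\{p : \exists\alpha\; \xi\in w_\alpha(0)\}$ is dense and $\bigcup_\alpha w_\alpha(0) = \lambda$; as each $w_\alpha(0)$ has size $<\kappa$, this collapses $\lambda$ when $\lambda>\kappa$.

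The structural heart is clause (6), from which (2), (5), (7) and (8) flow, and it is here that I expect the main difficulty. Fix a limit $\beta<\mu$. Some $p\in G$ has $w_\beta$ as an entry, and necessarily $\lh(w_\beta)>1$: a length-one entry admits no interpolation immediately beneath it (its companion set is empty) and so cannot be a limit of the enumeration. Applying the factoring isomorphism of Lemma \ref{sccfactor} at this entry presents $\mathbb{R}^{\rm sup}_u \downarrow p$ as a dense subset of $\mathbb{R}^{\rm sup}_{w_\beta}\downarrow p^{\le i} \times \mathbb{R}^{\rm sup}_u \downarrow p^{>i}$. Because the entries $\prec w_\beta$ are exactly the $w_\alpha$ with $\alpha<\beta$, the first coordinate of the resulting generic is $\langle \pi_{w_\beta}(w_\alpha) : \alpha<\beta\rangle = \langle w^*_\alpha : \alpha<\beta\rangle$ and the second is $\vec w\restriction[\beta,\mu)$; standard product genericity then gives that $\langle w^*_\alpha\rangle$ is $\mathbb{R}^{\rm sup}_{w_\beta}$-generic over $V$ and $\vec w\restriction[\beta,\mu)$ is $\mathbb{R}^{\rm sup}_u$-generic over $V[\langle w^*_\alpha\rangle]$. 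The delicate point, and the main obstacle, is that Lemma \ref{sccfactor} only yields an isomorphism onto a \emph{dense} subset: one must check that the factorizations obtained from different $p\in G$ cohere, that the projected lower sequences are cofinal, and that the two coordinates are genuinely mutually generic over the stated models; this bookkeeping is where the real work of the proof lies.

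Granting (6), the remaining clauses follow. Continuity in (2): applying the induction hypothesis (clause (4)) to $\mathbb{R}^{\rm sup}_{w_\beta}$ gives $\bigcup_{\alpha<\beta} w^*_\alpha(0) = \lambda_{w_\beta}$, and applying $\rho_{w_\beta}$ yields $\bigcup_{\alpha<\beta} w_\alpha(0) = w_\beta(0)$; the reverse inclusion is built into Definition \ref{sccradinextn}, as any entry interpolated below $w_\beta$ has $0$th coordinate contained in $w_\beta(0)$. Clause (5) is then immediate: $D$ is closed by continuity at limits, and $\sup_\alpha\kappa_{w_\alpha} = \kappa$ since $\bigcup_\alpha(w_\alpha(0)\cap\kappa) = \lambda\cap\kappa = \kappa$ by (4). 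For the claim in (2) that $\mu = \kappa$ when $\lh(u)\ge\kappa$, I would run the induction hypothesis along the limit points of $D$: for such a long $u$ the measures concentrate on sequences $w$ with $\lh(w)\ge\kappa_w$, so cofinally many $w_\alpha$ have local order type $\kappa_{w_\alpha}$, forcing $\ot(D) = \kappa$; this order-type computation is the second most delicate point. For (7), factoring at $w_\beta$ (via (6)) exhibits $V[\vec w]$ as a generic extension of $V[\langle w^*_\alpha : \alpha<\beta\rangle]$ by a forcing whose interpolation and upper parts are governed by the $\kappa$-complete measures in $u$ with $\kappa > \gamma$; the Prikry lemma (Lemma \ref{sccprikrylemma}) reduces deciding each of the $\gamma$-many statements ``$\xi\in A$'' to a direct extension, and $\kappa$-completeness lets one fuse these, so the tail adds no subset of $\gamma$ and $A\in V[\langle w^*_\alpha : \alpha<\beta\rangle]$. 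Finally (8): at a limit $\beta$ the induction hypothesis (clause (4)) applied locally makes $\lambda_{w_\beta} = \bigcup_{\alpha<\beta} w^*_\alpha(0)$ a union of at most $\kappa_{w_\beta}$ sets each of size $<\kappa_{w_\beta}$, so every $V$-cardinal in $(\kappa_{w_\beta},\lambda_{w_\beta}]$ is collapsed to $\kappa_{w_\beta}$; preservation of the remaining cardinals follows from the $\lambda^+$-c.c.\ above $\lambda$ and, below, from (7) and the induction hypothesis, which place subsets of any small $\gamma$ into an intermediate model in which $\gamma$ is not collapsed.
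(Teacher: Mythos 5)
Most of your outline coincides with the paper's own proof: clauses (1) and (3)--(5) are handled by density arguments from Lemmas \ref{addability1} and \ref{addability2} (your explicit computation that $\{v : \xi \in v(0)\}$ is $u$-large is exactly the point the paper leaves implicit), clause (6) by the factorization Lemma \ref{sccfactor}, clause (7) by Lemma \ref{sccfactor} plus the Prikry property (Lemma \ref{sccprikrylemma}) and the completeness of the measures, and clause (8) by local genericity together with the chain condition. The ``bookkeeping'' you flag in (6) is standard product-forcing coherence and is not where the paper spends any effort.

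There is, however, a genuine gap in your argument for the second assertion of clause (2), that $\mu = \kappa_u$ when $\lh(u) \ge \kappa_u$. You assert that ``for such a long $u$ the measures concentrate on sequences $w$ with $\lh(w) \ge \kappa_w$.'' Check this against a constructing embedding $j$: the measure $u(\beta)$ gives measure one to $\{ w : \lh(w) \ge \kappa_w \}$ if and only if $\lh(u_j \restriction \beta) = \beta \ge \kappa_u$, since $\kappa_{u_j \restriction \beta} = \kappa_u$. So only the measures with index $\beta \ge \kappa_u$ concentrate on long sequences, while each $u(\beta)$ with $\beta < \kappa_u = \crit(j)$ concentrates on $\{ w : \lh(w) = \beta \}$, i.e.\ on \emph{short} sequences. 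Consequently, in the boundary case $\lh(u) = \kappa_u$ your claim is false outright: the set $\{ w : \lh(w) < \kappa_w \}$ is $u$-large, so below a dense set of conditions every upper part consists of short sequences, and your supply of ``cofinally many $w_\alpha$ with local order type $\kappa_{w_\alpha}$'' disappears. Note also that this cannot be rescued from your induction hypothesis, since the eight clauses applied to the (themselves short) local sequences $w_\alpha$ give no lower bound on the local order types. The paper avoids this by proving a strictly stronger auxiliary statement: an easy induction shows $\mu \ge \omega^{\lh(u)-1}$ (ordinal exponentiation), i.e.\ $\mu \ge \omega^\beta$ whenever $\beta + 1 \le \lh(u)$, using that $u(\beta+1)$-typical entries have length $\beta+1$ and, inductively, generate local sequences of type at least $\omega^\beta$. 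Then for $\lh(u) \ge \kappa_u$ one gets $\mu \ge \sup_{\beta < \kappa_u} \omega^\beta = \kappa_u$ because $\kappa_u$ is inaccessible, whence $\mu = \kappa_u$. Your density argument can be repaired when $\lh(u) > \kappa_u$ (a single measure $u(\beta)$ with $\beta \ge \kappa_u$ suffices to interpolate long entries cofinally), but the case $\lh(u) = \kappa_u$, which is part of the statement, genuinely requires the ordinal-arithmetic induction.
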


\begin{proof}

   We will prove each claim in turn.

\begin{enumerate}

\item   As we noted above, $G$ is the set of conditions such that each $w_\alpha$ appears either in $p$
     or in some extension of $p$. It follows that $V[G]=V[\vec w ]$. By Lemma \ref{addability1} and an easy density argument,
     $\mu$ must be a limit ordinal.

\item  It follows from the definition of the forcing that $\vec C$ is $\prec$-increasing, and from
     Lemmas \ref{addability1} and \ref{addability2}  that $\vec C$ is continuous at every limit ordinal. An easy induction
    argument shows that $\mu \ge \omega^{\lh(u) - 1}$, in particular if $\lh(u) \ge \kappa_u$ then
    necessarily $\mu = \kappa_u$.

\item   We have that $w_\alpha$ and $w_{\alpha+1}$ are good measure sequences with $w_\alpha \prec w_{\alpha+1}$, and the
    conclusion is immediate.

\item   It follows from Lemma \ref{addability1} that $\vec C$ is cofinal in $(A(\kappa_u, \lambda_u), \prec)$,
 so in particular its union is $\lambda_u$.

\item   This follows from Lemmas \ref{addability1} and \ref{addability2}.

\item  This is immediate from Lemma \ref{sccfactor}.

\item   This follows from Lemma \ref{sccfactor} and Lemma \ref{sccprikrylemma}, together with the remark
   that for each $\alpha$ the measures in $w_\alpha$ are $\kappa_{w_\alpha}$-complete.

\item Since $\langle w^*_\alpha : \alpha < \beta  \rangle$ is ${\mathbb R}^{\rm sup}_{w_\beta}$-generic over $V$, the cardinal $\gamma$ is collapsed
   as claimed. Preservation of other cardinals follows from the analysis of bounded subsets of $\kappa$ in $V[G]$, and the
    fact that ${\mathbb R}^{\rm sup}_{w_\beta}$ is $\lambda_{w_\beta}^+$-c.c.~for all limit $\beta$.

\end{enumerate}

\end{proof}

\begin{lemma} \label{sccpres}

Let  $v \in \mathcal{U}^{\rm sup}_{\infty}$ be a
 $(\kappa, \lambda)$-measure sequence constructed from a suitable embedding $j$. Assume that
   $v$   has a weak repeat point $\alpha$ and $j$ witnesses that $\kappa$ is $\mu$-supercompact
   for some $\mu > \lambda$. Let $u = v \restriction \alpha$ and let $G$ be ${\mathbb R}^{\rm sup}_{u}$-generic over $V$.
 Then in $V[G]$, $\kappa$ remains $\mu$-supercompact.
\end{lemma}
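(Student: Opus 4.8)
The plan is to preserve $\mu$-supercompactness by \emph{lifting} the witnessing embedding. Writing $v = u_j$ and $u = v \restriction \alpha$, I would construct a filter $H$ which is ${\mathbb R}^{\rm sup}_{j(u)}$-generic over $M$ and satisfies $j[G] \subseteq H$; the standard lifting criterion then yields an elementary $\tilde{\jmath} : V[G] \to M[H]$ extending $j$ and defined by $\tilde{\jmath}(\dot{x}_G) = (j\dot{x})_H$. Since $\mu < j(\kappa)$ and $\tilde{\jmath} \supseteq j$, we have $\tilde{\jmath}[\mu] = j[\mu] \in M \subseteq M[H]$, and because $H$ is built inside $V[G]$ the map $\tilde{\jmath}$ is definable there. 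The measure $U^* = \{ X \in V[G] : X \subseteq (P_\kappa \mu)^{V[G]} \text{ and } j[\mu] \in \tilde{\jmath}(X) \}$ is then a set of $V[G]$, and the usual computations show it is a $\kappa$-complete, normal, fine ultrafilter on $(P_\kappa \mu)^{V[G]}$ (fineness because $j(\xi) \in j[\mu]$ for $\xi < \mu$, normality from the generating object $j[\mu]$, completeness from $\crit(\tilde{\jmath}) = \kappa$). This is exactly a witness that $\kappa$ is $\mu$-supercompact in $V[G]$.

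The key point, and the place where the hypothesis that $\alpha$ is a weak repeat point enters, is that the sequence $u$ itself can be used as a ``master entry''. Note first that $u(0) = u_j(0) = j[\lambda]$, so $\kappa_u = \kappa$, $\lambda_u = \lambda$, $u \in S(j(\kappa), j(\lambda))$ from the point of view of $M$, and $\pi_u = j^{-1} \restriction j[\lambda]$. I claim that $u$ may legally be interpolated immediately below the top entry of each image condition $j(p)$, $p \in G$. If $p$ has top part $A$, then $A$ is $u$-large, i.e. $A \in v(\beta)$ for all $\beta$ with $0 < \beta < \alpha$; were $A \notin v(\alpha)$ then its complement would lie in $v(\alpha)$, and by the weak repeat point property some $v(\beta)$ with $0 < \beta < \alpha$ would also contain the complement, contradicting $A \in v(\beta)$. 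Hence $A \in v(\alpha) = u_j(\alpha)$, which by the definition of $u_j(\alpha)$ means precisely that $u = u_j \restriction \alpha \in j(A)$. Together with $j(u^{n-1}) \prec u$ (immediate since $\lambda_{u^{n-1}} < \kappa = \kappa_u$) this shows, via case $i=m$ of Definition~\ref{sccradinextn}, that some good pair $(u, B)$ can be interpolated just below the top entry of $j(p)$.

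With the master entry identified I would factor the image forcing. Applying Lemma~\ref{sccfactor} in $M$ to split a condition at the entry $u$ (which has critical point $\kappa$), ${\mathbb R}^{\rm sup}_{j(u)}$ below a suitable condition is isomorphic to a product of a ``low'' forcing ${\mathbb R}^{\rm sup}_{u}$ and a ``high'' tail forcing ${\mathbb Q}$ whose entries all have critical point strictly above $\kappa$. Under the collapse $\pi_u = j^{-1} \restriction j[\lambda]$ the low factor is literally ${\mathbb R}^{\rm sup}_u$, and its generic is $G$ itself; since ${\mathbb R}^{\rm sup}_u \in M$ and $M \subseteq V$, the filter $G$ is ${\mathbb R}^{\rm sup}_u$-generic over $M$ as well. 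It then remains to produce a generic $H_{\mathrm{high}}$ for ${\mathbb Q}$ over $M[G]$ lying below the tails of the $j(p)$, and to let $H$ be the filter generated by $j[G]$ together with the master entry and $H_{\mathrm{high}}$; one checks directly from Definition~\ref{sccradinextn} that the resulting conditions refine each $j(p)$, giving $j[G] \subseteq H$.

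The main obstacle is exactly the construction of $H_{\mathrm{high}}$ and the verification that $H$ is genuinely $M$-generic. Here I would use that ${\mathbb Q}$ is itself a supercompact Radin forcing, so it enjoys the Prikry property of Lemma~\ref{sccprikrylemma} and a highly closed direct-extension order (its entries have critical points above $\kappa$, and the measures on $j(u)$ are $j(\kappa)$-complete). Because $j$ witnesses $\mu$-supercompactness with $\mu > \lambda$, the model $M$ is closed under $\mu$-sequences, and this closure is what makes it possible to diagonalize against the relevant dense sets of ${\mathbb Q}$ over $M[G]$, building $H_{\mathrm{high}}$ by a recursion that alternates direct extensions (to meet $\le^*$-dense sets, using the closure of $\le^{*}$) with stem extensions determining the generic sequence between $\kappa$ and $j(\kappa)$, and invoking the normality of Lemma~\ref{sccdiagintlemma} together with the addability Lemmas~\ref{addability1} and~\ref{addability2} to keep the master entry and the tails of the $j(p)$ inside the filter. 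The genericity of $H$ then reduces, via the factorization and the Prikry property, to the $M$-genericity of $G$ for the low factor and of $H_{\mathrm{high}}$ for the tail; this is the technical heart of the argument and the step requiring the most care.
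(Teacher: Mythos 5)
Your first two paragraphs are exactly right, and they reproduce the paper's key use of the weak repeat point: the fact that every $u$-large $A$ satisfies $A\in v(\alpha)$, hence $u=u_j\restriction\alpha\in j(A)$, so that $u$ can be interpolated just below the top of $j(p)$ to give a condition $p^*\le j(p)$ with $p\le q\Rightarrow p^*\le q^*$. But from that point on your strategy diverges from the paper's, and the step you yourself flag as ``the technical heart'' --- producing an $M[G]$-generic filter $H_{\mathrm{high}}$ for the tail forcing ${\mathbb Q}$ --- is not just delicate; it cannot be carried out by the diagonalization you describe. Genericity of $H_{\mathrm{high}}$ over $M[G]$ forces its induced sequence of measure sequences to be $\prec$-cofinal, so in particular the ordinals $\kappa_x$ along it must be cofinal in $j(\kappa)$. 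Since $j(\kappa)$ is regular in $M$ and $M$ is closed under $\mu$-sequences, any cofinal $\mu$-sequence in $j(\kappa)$ lying in $V$ would belong to $M$, so $\cf^V(j(\kappa))\ge\mu^+$; and since (in the intended application, GCH with $\lambda=\kappa^+$, $\mu=\kappa^{+3}$) the forcing ${\mathbb R}^{\rm sup}_u$ has size $2^{\lambda^{<\kappa}}=\kappa^{+2}\le\mu$, this cofinality computation persists in $V[G]$. Thus any construction of $H_{\mathrm{high}}$ must meet at least $\mu^+$ many dense sets (for instance, for each $\gamma<j(\kappa)$, the dense set of conditions whose stem passes $\gamma$), while the only closure available to you is $\mu$-closure of $M[G]$ in $V[G]$: intersections of families of $j(u)$-large sets are again large only when the family lies in $M[G]$, which fails for any recursion of length greater than $\mu$. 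So the recursion stalls at stage $\mu$, short of genericity, and no appeal to the Prikry property, Lemma~\ref{sccdiagintlemma}, or Lemmas~\ref{addability1}--\ref{addability2} repairs this counting problem.

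This is precisely why the paper does \emph{not} lift $j$ and never constructs any filter generic over $M$. Instead it uses your master-entry observation to define the supercompactness measure directly in $V[G]$ via the $M$-side forcing relation: setting $p^*\restriction E={q_d}^\frown{\langle(u,A)\rangle}^\frown\langle(j(u),j(A)\cap E)\rangle$, one declares $X\in U$ if and only if there are $p\in G$ and a $j(u)$-large $E$ with $p^*\restriction E\Vdash j[\mu]\in j(\dot X)$. Well-definedness follows from the coherence $p\le q\Rightarrow p^*\le q^*$, and the ultrafilter, fineness, normality and $\kappa$-completeness properties are verified using the Prikry lemma (Lemma~\ref{sccprikrylemma}) and the factoring lemma (Lemma~\ref{sccfactor}) applied to the condition $\langle(u,S(\kappa,\lambda)),(j(u),E)\rangle$ inside $M$, so that truth values of statements about $j[\mu]$ can be decided by shrinking $E$ alone. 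In short: the forcing relation of $j({\mathbb R}^{\rm sup}_u)$ over $M$ plays the role you intended for $H$, and this substitution is what makes the weak repeat point hypothesis sufficient; an actual lifting of $j$ would require resources (a generic over $M$ for a forcing with more than $\mu$ many dense sets) that $\mu$-supercompactness does not provide.
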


\begin{proof}
We prove the lemma in a sequence of claims.

\begin{claim} \label{ernie}

If $ A \in u(\beta)$ for all $\beta$ with $0 < \beta < \alpha$, then $u \in j(A)$.

\end{claim}

\begin{proof}

It suffices to show that $A$ is $v(\alpha)$-large. Suppose not.
 Then $A^c =S(\kappa, \lambda) \setminus A \in v(\alpha)$, so for some $\beta$ with $0 < \beta < \alpha$,
 $A^c \in v(\beta) = u(\beta)$ which is in contradiction with $A \in u(\beta)$.

\end{proof}

Note that any condition $p \in {\mathbb R}^{\rm sup}_{u}$ is of the form ${p_d}^\frown \langle (u, A)  \rangle$, for some unique $p_d$ and $A$.

 By  Claim \ref{ernie} $u \in j(A)$, so we can form the condition
\[
    p^* = {q_d}^\frown  {\langle (u, A) \rangle}^\frown \langle (j(u), j(A))  \rangle \in j({\mathbb R}^{\rm sup}_u),
\]
 where $q_d$ is obtained from $p_d$ by type changing to make the above condition well-defined,
 which is to say (arguing as in Lemma \ref{addability1}) that $q_d = j(p_d)$.

 The following can be proved easily.

\begin{claim}
             $p^* \le j(p)$  in $j({\mathbb R}^{\rm sup}_u)$, and if $p \le q$ then $p^* \le q^*$.
\end{claim}

Given any condition $p={p_d}^\frown \langle (u, A)  \rangle \in {\mathbb R}_{u}$ and any $j(u)$-large set $E$, set
\[
    p^* \restriction E= {q_d}^\frown  {\langle (u, A) \rangle}^\frown \langle (j(u), j(A) \cap E)  \rangle.
\]

We now define $U$ on $P_{\kappa} \mu$ as follows:
  $X \in U$ if and only if there exist $p \in G$ and $E$ which is $j(u)$-large
  such that $p^*\restriction E \Vdash j[\mu] \in j(\dot{X})$,
where $\dot{X}$ is a name for $X$.

\begin{claim}
The above definition of $U$ does not depend on the choice of the name $\dot{X}$.
\end{claim}

\begin{proof}
Suppose that  $i_G( \dot{X_1} ) = i_G(\dot{X_2})$ and that $p^* \restriction E \Vdash j[\mu] \in j(\dot{X_1})$.
   Strengthening $p$ if necessary we may assume that $p \Vdash \dot{X_1} = \dot{X_2}$,
   so that by elementarity $j(p) \Vdash j(\dot{X_1}) = j(\dot{X_2})$.
   Now $p^* \restriction E \le p^* \le j(p)$, so that $p^* \restriction E \Vdash j[\mu] \in j(\dot{X_2})$.
\end{proof}

We show that $U$ is a normal measure on $P_\kappa \mu$. $U$ is easily seen to be a filter.

\begin{claim}
       $U$ is an ultrafilter.
\end{claim}

\begin{proof}
Let $\dot X$ name a subset $X$ of $P_\kappa \mu$.  Appealing to Lemmas \ref{sccprikrylemma} and Lemma \ref{sccfactor},
    we may find $E$ such that the condition $\langle (u, S(\kappa, \lambda)), (j(u), E) \rangle$
    forces that the truth value of $j[\mu] \in j(\dot X)$ is equal to a truth value
    for ${\mathbb R}^{\rm sup}_u$. Hence we may find $p \in G$ such that $p^* \restriction E$
    decides $j[\mu] \in j(\dot X)$, from which it follows that either $X \in U$ or $X^c \in U$.
\end{proof}

\begin{claim}
       $U$ is fine.
\end{claim}

\begin{proof}
Suppose that $\alpha < \mu$, $X = \{x \in P_\kappa(\mu): \alpha \in x \}$ and that $p \in {\mathbb R}^{\rm sup}_u$.
 It is clear that $p^* \Vdash j[\mu] \in j(\dot X)$, so $p$ forces $\dot{X} \in \dot{U}$.
\end{proof}

\begin{claim} \label{bert}
$U$ is normal.
\end{claim}

\begin{proof}
Let $F: P_\kappa \mu \rightarrow \lambda$ be regressive, that is $F(x) \in x$ for all non-empty
 $x \in P_\kappa \mu$, and let $\dot F$ name $F$.
  Appealing to Lemmas \ref{sccprikrylemma} and Lemma \ref{sccfactor}, together with the facts that
  $\mu < j(\kappa)$ and the measures in $U$ are $j(\kappa)$-complete,
  we may find $E$ such that the condition $\langle (u, S(\kappa, \lambda)), (j(u), E) \rangle$
  forces that for every $\alpha \in \mu$ the truth value of $j(\dot F)(j[\mu]) = j(\alpha)$
  is equal to a truth value for ${\mathbb R}^{\rm sup}_u$.
  Hence we may find $p \in G$ and $\alpha < \mu$ such that
  $p^* \restriction E \Vdash j(F)(j[\mu]) = j(\alpha)$, from which we see that
  $\{ x : F(x) = \alpha \} \in U$.
\end{proof}

\begin{claim}
$U$ is $\kappa$-complete.
\end{claim}

\begin{proof}
 This follows by a similar argument to the one we gave for normality in Claim \ref{bert}.
\end{proof}

It follows that  $U$ is a normal measure on $P_\kappa \mu$, and the lemma follows.
\end{proof}

\section{Projected forcing}  \label{sccprojforcing}

   As we saw in the last section, if $u \in {\mathcal U}^{\rm sup}_\infty$ is a  $(\kappa, \lambda)$-measure sequence
  and $G$ is generic for ${\mathbb R}^{\rm sup}_u$, then $(\alpha^+)^V < (\alpha^+)^{V[G]}$
  for a closed unbounded set of cardinals $\alpha < \kappa$. Using a sequence with a repeat point,
  we may also arrange that $\kappa$ is a large cardinal in $V[G]$.  We wish to find a submodel
  $V[G^\phi]$ of $V[G]$ such that $V[G^\phi]$ is a cardinal-preserving extension of $V$,
  and $HOD^{V[G]} \subseteq V[G^\phi]$.

  It will be technically convenient, and sufficient for the intended application,
  to assume from this point on that $\lambda = \kappa^+$. The measures in a good
  $(\kappa, \kappa^+)$-measure sequence will concentrate on a certain subset
  of $S(\kappa, \kappa^+)$, namely $\{ w \in S(\kappa, \kappa^+) : \lambda_w = \kappa_w^+ \}$.
  It follows that by working below a suitable condition in ${\mathbb R}^{\rm sup}_u$,
  we may assume that $\lambda_v = \kappa_v^+$ for every $v$ appearing on the generic sequence.

  In the interests of notational simplicity, we prefer to make a slight modification in certain definitions.
  From this point on we let $A(\kappa, \kappa^+)$ be the set of $x \in P_\kappa \kappa^+$ such that
  (as before) $\kappa_x = x \cap \kappa \in \kappa$ and $\kappa_x$ is inaccessible,  and (modified)
  $\lambda_x = \ot(x) = \kappa_x^+$. We then modify the definitions of
  $S(\kappa, \kappa^+)$ (Definition \ref{skappalambda}), $(\kappa, \kappa^+)$ measure sequence (Definition \ref{kappalambdasequence})
  good $(\kappa, \kappa^+)$-measure sequence (Definition \ref{sccgoodmeasuresequence}), and ${\mathbb R}^{\rm sup}_u$ in case
  $u$ is a good $(\kappa, \kappa^+)$ measure sequence (Definition \ref{sccradincdn}) accordingly.

  We will obtain $V[G^\phi]$ by defining a {\em projected sequence $\phi(u)$}, a {\em projected forcing} ${\mathbb R}^{\rm proj}_{\phi(u)}$, and an
  order-preserving map $\phi$  from ${\mathbb R}^{\rm sup}_u$ to  ${\mathbb R}^{\rm proj}_{\phi(u)}$, then
  arguing that $\phi[G]$ generates a  ${\mathbb R}^{\rm proj}_{\phi(u)}$-generic filter $G^\phi$.
  We note that our projected forcing is rather  different from the parallel construction
  of Foreman and Woodin \cite{foreman-woodin}. The reason is that we need our projected forcing
  to be as close to the supercompact Radin forcing  as possible, so that the quotient forcing is  sufficiently homogeneous.

 Given any $u \in {\mathcal U}^{\rm sup}_\infty$ we first define $\phi(u)$, and then we will define the projected forcing
 ${\mathbb R}_{\phi(u)}^{\rm proj}$.
\begin{definition}

Suppose $(u, A)$ is a good pair.

\begin{itemize}

\item $\phi(u) = \kappa_u^\frown \langle u(\zeta) : 0 < \zeta < \lh(u) \rangle$.

\item $\phi(A) = \{ \phi(v): v \in A    \}$.

\end{itemize}

Also let $\mathcal{U}_\infty^{\rm proj} =\{ \phi(u): u \in {\mathcal U}^{\rm sup}_\infty  \}$,
  and for $w \in \mathcal{U}_\infty^{\rm proj}$ let $\kappa_w = w(0)$.
 For $u \in {\mathcal U}_\infty^{\rm sup}$ and $0 < \alpha < \lh(u)$,
  let  $\phi( u(\alpha) )$ be the Rudin-Keisler projection of $u(\alpha)$ via the map $\phi$;
 similarly if $w \in {\mathcal U}^{\rm proj}_\infty$ and $0 < \alpha < \lh(w)$,
  let  $\phi( w(\alpha) )$ be the Rudin-Keisler projection of $w(\alpha)$ via the map $\phi$;

\end{definition}

 Note that $\phi( u(\alpha) ) \neq \phi(u)(\alpha) = u(\alpha)$, because the former is a measure on $V_\kappa$ and the latter is
  a measure on $S(\kappa, \kappa^+)$. In fact $\phi( u(\alpha) ) = \phi( \phi(u)(\alpha) )$.

\begin{definition}  Let $w, w' \in \mathcal{U}_\infty^{\rm proj}$. Then $w \prec w'$ if and only if $w \in V_{\kappa_{w'}}$.
\end{definition}

    It is routine to check that if $u, v \in \mathcal{U}_\infty^{\rm sup}$ with $u \prec v$, then $\phi(u) \prec \phi(v)$.

\begin{definition}

A {\em  good pair for projected forcing} is a pair $(w, B)$ where $w \in {\mathcal U}_\infty^{\rm proj}$,
   $B \subseteq {\mathcal U}_\infty^{\rm proj} \cap  V_{\kappa_w}$ and  $B \in \phi(w(\alpha))$ for all $\alpha$ with  $0 < \alpha < \lh(w)$.

\end{definition}

\begin{remark}

If $(u, A)$ is a good pair, then  $(\phi(u), \phi(A))$ is a good pair for projected forcing.

\end{remark}

Given $w \in {\mathcal U}_\infty^{\rm proj}$, we define the projected forcing ${\mathbb R}_w^{\rm proj}$.

\begin{definition}

Let  $w \in {\mathcal U}_\infty^{\rm proj}$.
A condition in  ${\mathbb R}_w^{\rm proj}$ is a finite sequence
\[
    p=\langle (w^0, B^0), \ldots, (w^i, B^i), \ldots, (w^m, B^m) \rangle
\]
   where:

\begin{enumerate}

\item $w^m = w$.

\item Each $(w^i, B^i)$ is a good pair for projected forcing.

\item $w_i \prec w_{i+1}$, for all $i<m$.

\end{enumerate}

\end{definition}

We now define the extension relation.
\begin{definition} \label{sccprojextdef}

Let
\[
     p = \langle (w^0, B^0), \ldots, (w^i, B^i), \ldots, (w^m = w, B^m) \rangle
\]
 and
\[
     q = \langle (x^0, C^0), \ldots, (x^j, C^j), \ldots, (x^n = w, C^n) \rangle
\]
    be in ${\mathbb R}_w^{\rm proj}$. Then $q \le p$ ($q$ is an {\em extension} of $p$) if and only if:

\begin{enumerate}

\item \label{sccprojext1} There exists an increasing sequence of natural numbers
   $j_0 < \ldots < j_m = n$ such that $x^{j_k} = w^k$ and $C^{j_k} \subseteq B^k$.

\item \label{sccprojext2} If $j$ is such that  $0 \le j \le n$ and  $j \notin \{ j_0, \ldots, j_m \}$,
   and if $i$ is  least such that $\kappa_{x^j}< \kappa_{w^i}$, then $x^j \in B^i$ and $C^j \subseteq B^i$.
\end{enumerate}

We also define $q \le^* p$ ($q$ is a {\em direct extension} of $p$) iff $q \le p$ and $m = n$.

\end{definition}

   As motivation, we consider the special case when $w = \phi(u)$ and $\lh(u) = 3$. Forcing
   with ${\mathbb R}^{\rm proj}_w$ below a suitable condition we will obtain a generic
   object $\vec w = \langle w_\alpha : \alpha < \omega^2 \rangle$ where:
\begin{enumerate}
\item $w_\alpha = \langle \kappa_\alpha \rangle$ for $\alpha = 0$ or a successor ordinal,
   $w_\alpha = \langle \kappa_\alpha, U_\alpha \rangle$ for $\alpha$ a limit ordinal.
\item $\langle \kappa_\alpha : \alpha < \omega^2 \rangle$ is increasing and cofinal in $\kappa_u$.
\item $U_\alpha$ is a $\kappa_\alpha^+$-supercompactness measure on $P_{\kappa_\alpha} \kappa_\alpha^+$.
\end{enumerate}
   As we see below, the generic extension $V[\vec w]$ preserves cardinals and can be viewed as
  a submodel of an extension $V[\vec u]$, where $\vec u = \langle u_\alpha : \alpha < \omega^2 \rangle$
  and $w_\alpha = \phi(u_\alpha)$ for each $\alpha$. The key idea is that the model $V[\vec w]$
  ``remembers'' the definitions of the forcing posets ${\mathbb R}^{\rm sup}_{u_\alpha}$ for each limit $\alpha$,
   and retains enough information to singularise $\kappa_\alpha$ for each such $\alpha$,
   but ``forgot''  the collapsing information that was present in the entries $u_\beta(0)$.

   The theory of ${\mathbb R}_w^{\rm proj}$ is very similar to that of ${\mathbb R}^{\rm sup}_u$, but the
  statements and proofs are  simpler because there is no need for the ``type changing'' maps.
  For example the following result is the analog of Lemmas \ref{addability1} and \ref{addability2}
  for ${\mathbb R}_w^{\rm proj}$, and can be proved in exactly the same way.

\begin{lemma} \label{projaddability}
 Let $w \in {\mathcal U}^{\rm proj}_\infty$ with  $\lh(w) > 1$, and
  let
\[
     p = \langle (w^0, B^0), \ldots, (w^i, B^i), \ldots, (w^m = w, B^m) \rangle
\]
  be a condition in
  ${\mathbb R}^{\rm proj}_w$. Let $i  \le  m$ with $\lh(w^i) > 1$,
   and let $Y$ be the set of sequences $v$ with the following property: there exists  a
   set $B$ such that $(v, B)$ is a good pair, and if we set
\[
    q = \langle (w^0, B^0), \ldots,  (w^{i-1}, B^{i-1}), (v, B),  (w^i, B^i), \ldots, (w^{m-1}, B^{m-1}),  (w^m = w, B^m) \rangle
\]
  then $q$ is a condition extending $p$. Then $Y$ is $w^i$-large.
\end{lemma}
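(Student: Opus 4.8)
The plan is to imitate the proof of Lemma~\ref{addability1}, exploiting the fact that in the projected forcing there are no type-changing maps, so that a single reflection argument handles every $i \le m$ at once and thereby unifies the analogues of Lemmas~\ref{addability1} and~\ref{addability2}. First I would unwind the definition of extension (Definition~\ref{sccprojextdef}) to characterize membership in $Y$: a sequence $v$ lies in $Y$ if and only if (a) $v \in B^i$, (b) $w^{i-1} \prec v$ (this clause being vacuous when $i = 0$), and (c) $B^i \cap V_{\kappa_v}$ is $v$-large in the projected sense, i.e. $B^i \cap V_{\kappa_v} \in \phi(v(\gamma))$ for all $\gamma$ with $0 < \gamma < \lh(v)$. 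Indeed, conditions (a) and (b) guarantee that the least index $i'$ with $\kappa_v < \kappa_{w^{i'}}$ equals $i$ and that $v$ may legally sit between $w^{i-1}$ and $w^i$, while (c) says that the largest admissible upper part, namely $B := B^i \cap V_{\kappa_v}$, makes $(v,B)$ a good pair for projected forcing with $B \subseteq B^i$, which is exactly what Definition~\ref{sccprojextdef} requires for the interpolated condition $q$ to extend $p$.

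To show $Y$ is $w^i$-large I would fix $\alpha$ with $0 < \alpha < \lh(w^i)$ and prove $Y \in \phi(w^i(\alpha))$. Since $w^i \in {\mathcal U}^{\rm proj}_\infty$, choose $u^* \in {\mathcal U}^{\rm sup}_\infty$ with $\phi(u^*) = w^i$ and a constructing embedding $j : V \rightarrow M$ for $u^*$, and write $\kappa^* = \kappa_{w^i} = \crit(j)$. Unravelling the Rudin--Keisler projection, $Y \in \phi(w^i(\alpha)) = \phi(u^*(\alpha))$ is equivalent to $j(\phi)(u_j \restriction \alpha) \in j(Y)$, and since $\phi$ is definable the object $j(\phi)(u_j \restriction \alpha)$ is just the projected sequence $P := \phi(u_j \restriction \alpha)$, which has $\kappa_P = P(0) = \kappa^*$. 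It then remains to verify in $M$ that $P$ satisfies the three clauses defining $j(Y)$. Clause (a), $P \in j(B^i)$, is exactly the statement that $B^i \in \phi(u^*(\alpha))$, which holds because $B^i$ is $w^i$-large. Clause (b) is immediate: $w^{i-1} \prec w^i$ gives $w^{i-1} \in V_{\kappa^*}$, so $j(w^{i-1}) = w^{i-1} \in V_{\kappa_P}$. For clause (c) one uses $\crit(j) = \kappa^*$ to see that $j(B^i) \cap V_{\kappa_P} = B^i \cap V_{\kappa^*} = B^i$, reducing the requirement to $B^i \in \phi^M(u_j(\gamma))$ for all $\gamma$ with $0 < \gamma < \alpha$.

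The one point that needs genuine care---and the only place the projection does anything---is the identification $\phi^M(u_j(\gamma)) = \phi(w^i(\gamma))$ used in clause (c). This rests on the agreement between $V$ and $M$ on $P(S(\kappa^*, (\kappa^*)^+))$, which follows from the closure of $M$ under short sequences together with $GCH$, exactly as in the agreement steps in the proof of Lemma~\ref{gettingulemma}; granting this, $u_j(\gamma) = u^*(\gamma)$ and its $\phi$-projection are computed identically in $V$ and $M$, so clause (c) amounts to $B^i \in \phi(w^i(\gamma))$ for $\gamma < \alpha$, which again holds because $B^i$ is $w^i$-large. With all three clauses verified we obtain $P \in j(Y)$ and hence $Y \in \phi(w^i(\alpha))$; as $\alpha$ was arbitrary, $Y$ is $w^i$-large. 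I expect the bookkeeping around the Rudin--Keisler projection and this $V$/$M$ agreement to be the main obstacle, whereas the combinatorial core is strictly simpler than in Lemma~\ref{addability1} precisely because the maps $\pi_v$, $\rho_v$ and $\sigma_{v w}$ never appear.
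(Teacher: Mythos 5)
Your proof is correct and is essentially the argument the paper intends: the paper gives no separate proof of Lemma \ref{projaddability}, saying only that it can be proved "in exactly the same way" as Lemmas \ref{addability1} and \ref{addability2}, and your argument --- choosing $u^*$ with $\phi(u^*) = w^i$, reflecting with its constructing embedding, and unwinding the Rudin--Keisler projection so that largeness of $Y$ reduces to the largeness of $B^i$ built into the good pair $(w^i, B^i)$ --- is precisely that adaptation. Your further observation that the absence of type-changing maps lets a single reflection handle every $i \le m$ (where the supercompact case treated $i < m$ separately via the factoring Lemma \ref{sccfactor}) is exactly the simplification the paper advertises when it says the projected theory is "simpler because there is no need for the type changing maps"; the only cosmetic blemish is that your appeal to GCH for the $V$/$M$ agreement is not needed, since closure of $M$ under $\lambda$-sequences together with Solovay's theorem $\lambda^{<\kappa}=\lambda$ (which the paper cites) already suffices.
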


\begin{definition} If $p \in {\mathbb R}_w^{\rm proj}$ with
 $p = \langle (w^0, B^0), \ldots, (w^i, B^i), \ldots, (w^m = w, B^m) \rangle$,
  then {\em $x$ appears in $p$} if and only if $x = w^i$ for some $i < m$.
\end{definition}

  It is easy to see that  ${\mathbb R}_w^{\rm proj}$ satisfies the $\kappa_w^+$-c.c.

\begin{lemma} \label{projclub}
Let $G$ be ${\mathbb R}_w^{\rm proj}$-generic over $V$, and let
\[
    D = \{ \kappa_x : \mbox{$x$ appears in some $p \in G$} \}.
\]
  Then $D$ is a club of $\kappa_w$. Furthermore if $\lh(w) \geq \kappa_w$, then  $\ot(D) = \kappa_w$.
\end{lemma}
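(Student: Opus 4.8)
The plan is to follow the proofs of the continuity and club statements in Lemma~\ref{sccportmanteau} for the projected forcing, where the argument is cleaner because no type-changing maps intervene. Let $\langle w_\alpha : \alpha < \mu \rangle$ enumerate in $\prec$-increasing order the sequences that appear in conditions of $G$. Since $\kappa_x = x(0)$ for $x \in \mathcal{U}_\infty^{\rm proj}$, and $x \prec y$ means $x \in V_{\kappa_y}$ and hence $\kappa_x = x(0) < \kappa_y$, the map $\alpha \mapsto w_\alpha(0)$ is a strictly increasing enumeration of $D$. Thus $D = \{ w_\alpha(0) : \alpha < \mu \}$ and $\ot(D) = \mu$. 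Every appearing $x$ lies in $V_{\kappa_w}$, so $\kappa_x < \kappa_w$ and $D \subseteq \kappa_w$; in particular $\mu = \ot(D) \le \kappa_w$. Here $\kappa_w$ is inaccessible in $V$, a point I will use for the order-type clause.

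First I would prove unboundedness. Fix $\gamma < \kappa_w$ and a condition $p = \langle (w^0,B^0),\dots,(w^m=w,B^m)\rangle \in G$. Applying Lemma~\ref{projaddability} at the top coordinate $i=m$, the set $Y$ of sequences interpolable just below $(w,B^m)$ is $w$-large, that is, $Y \in \phi(w(\alpha))$ for every $\alpha$ with $0 < \alpha < \lh(w)$. Each $\phi(w(\alpha))$ is the Rudin--Keisler projection of the $\kappa_w$-complete non-principal measure $w(\alpha)$, hence is itself a $\kappa_w$-complete non-principal measure on $V_{\kappa_w}$, and so concentrates on $\{ v : \kappa_v > \gamma \}$. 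Therefore $Y \cap \{ v : \kappa_v > \gamma \}$ is $w$-large, in particular nonempty, and the conditions obtained from $p$ by interpolating such a $v$ just below $(w, B^m)$ are dense below $p$. By genericity some such $v$ appears in a condition of $G$, so $D$ meets $(\gamma, \kappa_w)$, and $D$ is unbounded in $\kappa_w$.

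For closure it suffices to check that $\langle w_\alpha(0) : \alpha < \mu\rangle$ is continuous, i.e.\ $w_\beta(0) = \sup_{\alpha<\beta} w_\alpha(0)$ for every limit $\beta < \mu$; closure of $D$ is then immediate, since the limit points of $D$ below $\kappa_w$ are exactly the $w_\beta(0)$ for limit $\beta$. This is the projected analogue of the continuity assertion in Lemma~\ref{sccportmanteau}(2) and is proved the same way: whenever an appearing entry $x$ has $\lh(x) > 1$, Lemma~\ref{projaddability} lets one interpolate further entries immediately below $x$, and since each $\phi(x(\alpha))$ is $\kappa_x$-complete and concentrates below $\kappa_x$, the interpolable entries have $\kappa$-values cofinal in $\kappa_x$. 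A density argument then rules out any ``gap'' below an appearing entry of length greater than $1$, forcing $\kappa_x = \sup\{ \kappa_v : v \text{ appears}, \kappa_v < \kappa_x\}$; entries of length $1$ contribute only successor points of $D$. Hence no limit point of $D$ is skipped, $D$ is closed, and $D$ is club in $\kappa_w$. (One may instead argue directly from a factoring lemma analogous to Lemma~\ref{sccfactor}, exactly as in the proof of Lemma~\ref{sccportmanteau}(5).) I expect this continuity step to be the main obstacle; unboundedness and the order-type computation are routine.

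Finally, for the order-type clause I would run the easy induction behind Lemma~\ref{sccportmanteau}(2), which shows $\mu \ge \omega^{\delta}$ for every $\delta$ with $\delta + 1 < \lh(w)$. If $\lh(w) \ge \kappa_w$ then this holds for every $\delta < \kappa_w$, and since $\kappa_w$ is inaccessible it is closed under ordinal exponentiation, so $\sup_{\delta < \kappa_w} \omega^{\delta} = \kappa_w$ and hence $\mu \ge \kappa_w$. Combined with $\mu \le \kappa_w$ this gives $\ot(D) = \mu = \kappa_w$, as required. The paper's remark that the theory of ${\mathbb R}_w^{\rm proj}$ runs parallel to that of ${\mathbb R}^{\rm sup}_u$ means each of these steps is a transcription of an already-established supercompact argument, with the simplification that $\kappa_x = x(0)$ is now literally an ordinal.
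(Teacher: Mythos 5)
Your proof is correct and follows essentially the same route as the paper, whose entire proof of Lemma \ref{projclub} is the single sentence ``This is an easy consequence of Lemma \ref{projaddability}'': your density arguments via Lemma \ref{projaddability} for unboundedness and continuity, together with the order-type induction transcribed from Lemma \ref{sccportmanteau}, are exactly the intended elaboration. The only imprecision is your claim that $\phi(w(\alpha))$ concentrates on $\{ v : \kappa_v > \gamma \}$ \emph{because} it is $\kappa_w$-complete and non-principal --- non-principality of a measure on $V_{\kappa_w}$ does not by itself make the level sets $\{ v : \kappa_v = \delta \}$ null --- but the fact is true and is checked in one line from a constructing embedding $j$ for $u$ (where $w = \phi(u)$), since the relevant statement reduces to $\kappa_{\phi(u_j \restriction \alpha)} = \kappa_w > j(\gamma) = \gamma$.
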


\begin{proof}  This is an easy consequence of Lemma \ref{projaddability}.
\end{proof}

As before we have a factorization property for ${\mathbb R}_w^{\rm proj}$.

\begin{lemma} \label{sccprojfactor}
 Suppose that
\[
    p=\langle (w^0, B^0), \ldots, (w^i, B^i), \ldots, (w^m = w, B^m) \rangle \in {\mathbb R}^{\rm proj}_w
\]
    and  $i < m$ with $\lh(w^i) > 1$.  Let
\[
     p^{> i}= \langle  (w^{i+1}, B_*^{i+1}), \ldots, (w^m=w, B_*^m)  \rangle,
\]
   where $B^j_* = B^j$ for all $j > i+1$ while
   $B^{i+1}_* = \{ w \in B^{i+1} :  w^i \prec w \}$.
   Let
\[
     p^{\le i}= \langle  (w^0, B^0), \ldots, (w^{i-1}, B^{i-1}), (w^i, B^i) \rangle.
\]

 Then  $p^{\le i} \in {\mathbb R}^{\rm proj}_{w^i}$, $p^{> i} \in {\mathbb R}^{\rm proj}_w$ and there exists
\[
    i : {\mathbb R}^{\rm proj}_{w^i} \downarrow p^{\le i} \times {\mathbb R}^{\rm proj}_w \downarrow p^{> i} \rightarrow {\mathbb R}^{\rm proj}_w
\]
which is an isomorphism between its domain and a dense subset of ${\mathbb R}^{\rm proj}_u$.
\end{lemma}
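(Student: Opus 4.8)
The plan is to follow the template of Lemma \ref{sccfactor}, exploiting the fact that the projected forcing has no type changing maps, so that the isomorphism $i$ is simply concatenation. First I would record the two easy structural facts. That $p^{\le i} \in {\mathbb R}^{\rm proj}_{w^i}$ is immediate, since $p^{\le i}$ is the initial segment of $p$ through the entry whose top coordinate is $w^i$, and the defining clauses of an ${\mathbb R}^{\rm proj}_{w^i}$-condition are inherited from those of $p$. That $p^{>i} \in {\mathbb R}^{\rm proj}_w$ requires only checking that the shrunk set $B^{i+1}_* = \{ w \in B^{i+1} : w^i \prec w \}$ is still $w^{i+1}$-large; this holds because each measure $\phi(w^{i+1}(\alpha))$ concentrates on ${\mathcal U}^{\rm proj}_\infty \cap V_{\kappa_{w^{i+1}}}$, and for any such $w$ with $\kappa_w$ above $\kappa_{w^i}$ we have $w^i \in V_{\kappa_w}$, i.e. $w^i \prec w$, so the restriction discards only a null set.

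Next I would define the map. Given $q_0 = \langle (x^0, C^0), \ldots, (x^k = w^i, C^k) \rangle \le p^{\le i}$ and $q_1 = \langle (y^0, D^0), \ldots, (y^l = w, D^l) \rangle \le p^{>i}$, I set $i(q_0, q_1) = q_0{}^\frown q_1$, the concatenation. This is a legitimate ${\mathbb R}^{\rm proj}_w$-condition: the top coordinate is $w$, and the sequence is $\prec$-increasing because every entry of $q_0$ is $\preceq w^i$ while every entry of $q_1$ is $\succ w^i$ (the preserved entries of $q_1$ have top coordinate $\succeq w^{i+1} \succ w^i$, and the interpolated ones lie in $B^{i+1}_*$, hence are $\succ w^i$). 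Because $\kappa_{w^i}$ separates the two blocks, $w^i$ occurs exactly once on $i(q_0,q_1)$, so the concatenation can be undone: $i$ is injective, with inverse given by splitting a sequence at its unique $w^i$-entry.

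Then I would check that $i$ is order preserving in both directions and has dense range in ${\mathbb R}^{\rm proj}_w \downarrow p$. The key observation for both is that in any extension $r \le p$, the entry $w^i$ must reappear, since $w^i$ appears in $p$ and clause (\ref{sccprojext1}) of Definition \ref{sccprojextdef} forces each $w^k$ to be preserved. Thus $r$ splits at its $w^i$-entry into an initial block $r_0$ and a tail block $r_1$. Clause (\ref{sccprojext2}), applied according to whether an interpolated entry has critical point below or above $\kappa_{w^i}$, shows precisely that the measure-one-set constraints on the two blocks are governed independently by the $B^j$ with $j \le i$ and with $j > i$; this is exactly the assertion that $r_0 \le p^{\le i}$ together with $r_1 \le p^{>i}$ is equivalent to $r \le p$, which yields the order correspondence. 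For density, given $r \le p$ I would shrink the measure-one set $C$ on the $w^{i+1}$-entry of $r$ to $C \cap B^{i+1}_*$ (still $w^{i+1}$-large), obtaining $r' \le r$; its entries below $w^i$ and above $w^i$ then furnish factors $q_0 \le p^{\le i}$ and $q_1 \le p^{>i}$ with $i(q_0,q_1) = r'$. Note this shrinking is genuinely needed, which is why the range is only dense rather than all of ${\mathbb R}^{\rm proj}_w \downarrow p$.

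The main obstacle I anticipate is purely bookkeeping: verifying that clause (\ref{sccprojext2}) really does decouple along the boundary $\kappa_{w^i}$, so that an interpolation on one side of $w^i$ imposes no constraint on the other side. This is where the absence of type changing maps makes the projected case genuinely easier than Lemma \ref{sccfactor}, since there is no need to pass between the ``local'' space attached to $w^i$ and the ambient space; once the split point is identified the two blocks are manifestly independent, and the order isomorphism drops out.
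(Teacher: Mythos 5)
Your overall route is exactly the paper's: the paper's entire proof of this lemma consists of writing down $i(q_0,q_1)$ as the concatenation of the two conditions (the absence of type-changing maps being precisely what makes this trivial compared with Lemma \ref{sccfactor}), so your definition of the map, the well-definedness check, and the decoupling analysis of the two blocks all match the intended argument, only in more detail.

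There is, however, one step in your density verification that fails as written. Given $r \le p$, you shrink only the set $C$ attached to the $w^{i+1}$-entry of $r$. But if $r$ has an interpolated entry $(y, D)$ lying strictly between $w^i$ and $w^{i+1}$, then for the tail block to satisfy $q_1 \le p^{>i}$, clause (\ref{sccprojext2}) of Definition \ref{sccprojextdef} requires not only $y \in B^{i+1}_*$ (which is automatic, since $y$ occurs after $w^i$ in $r$, so $w^i \prec y$) but also $D \subseteq B^{i+1}_*$. From $r \le p$ you only know $D \subseteq B^{i+1}$, and nothing in the definition of a good pair for projected forcing prevents the measure one set $D$ from containing sequences $z$ of small rank, say with $\kappa_z < \kappa_{w^i}$, for which $w^i \prec z$ fails; for such $r$ your split tail $q_1$ does not extend $p^{>i}$, and $r'$ is not in the range of $i$. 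The repair is the shrinking you already employ, applied uniformly: intersect the sets attached to the $w^{i+1}$-entry \emph{and} to every interpolated entry of $r$ between $w^i$ and $w^{i+1}$ with $\{ z : w^i \prec z \}$. Each such intersection is still large for the relevant sequence $y$, by the same computation you used for $B^{i+1}_*$: if $j$ constructs a $\phi$-preimage of $y$, then $\crit(j) = \kappa_y$, and since $w^i \prec y$ gives $w^i \in V_{\kappa_y}$ we get $j(w^i) = w^i \in V_{\kappa_y}$, so $\{ z : w^i \prec z \}$ receives measure one from every measure on $y$. With this correction your argument is complete.
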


\begin{proof}
 Let $q_0 \le p^{\le i}$ and $q_1 \le p^{>i}$,
 say $q_0 = \langle (v^0, B^0), \ldots, (v^j = w^i, B^j) \rangle$ and
 $q_1 = \langle (v^{j+1}, B^{j+1}), \ldots (v^n = w, B^n) \rangle$.
 Define $i(q_0, q_1)  = \langle  (v^j, B^j) : 0 \le j \le n \rangle$ .
 \end{proof}

\begin{lemma} \label{sccprojdiagintlemma}  Let $w \in {\mathcal U}^{\rm proj}_\infty$ with $\lh(w) > 1$ and let $0 < \beta < \lh(w)$,
    let $L$ be a set of lower parts for ${\mathbb R}^{\rm proj}_w$ and let $\langle A_s: s \in L \rangle$ be a sequence such that
    $A_s \in \phi(w(\beta))$ for all $s$.
  Then  $\Delta_s B_s = \{ w  : \forall s \prec w \; w \in B_s \} \in \phi(w(\beta))$.
\end{lemma}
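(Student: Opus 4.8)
The plan is to follow the proof of Lemma~\ref{sccdiagintlemma} almost verbatim, the only new ingredient being that the measures $\phi(w(\beta))$ are Rudin--Keisler projections, so that the relevant ``seed'' must itself be projected through $\phi$. Since $w \in \mathcal{U}^{\rm proj}_\infty$, I would fix $u \in \mathcal{U}^{\rm sup}_\infty$ with $\phi(u) = w$ and a constructing embedding $j : V \rightarrow M$ for $u$, so that $w(\beta) = u(\beta) = u_j(\beta)$ for $0 < \beta < \lh(w)$. Writing $w_j = \phi(u_j \restriction \beta)$, the defining property of a projected measure yields the membership criterion
\[
   A \in \phi(w(\beta)) \iff \phi^{-1}[A] \in u(\beta) \iff u_j \restriction \beta \in j(\phi^{-1}[A]) \iff w_j \in j(A),
\]
valid for every $A \subseteq \mathcal{U}^{\rm proj}_\infty \cap V_{\kappa_w}$; the last equivalence uses that $\phi$ is definable, so that $j(\phi^{-1}[A]) = (j(\phi))^{-1}[j(A)]$ and $w_j = (j(\phi))(u_j \restriction \beta)$. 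Thus proving $\Delta_s A_s \in \phi(w(\beta))$ reduces to verifying $w_j \in j(\Delta_s A_s)$.

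The heart of the argument is to identify which stems $s' \in j(L)$ satisfy $s' \prec w_j$. I would first note that $\kappa_{w_j} = \min(ON \setminus j[\kappa^+]) = \kappa = \kappa_w$, since $\crit(j) = \kappa$ and $u_j(0) = j[\kappa^+]$. Now if $s' \in j(L)$ with $s' \prec w_j$, then its last (hence, since its entries form a $\prec$-increasing chain, each) entry lies in $V_{\kappa_{w_j}} = V_\kappa$, so $s' \in V_\kappa$ and therefore $j(s') = s'$; by elementarity $s' \in j(L)$ together with $j(s') = s'$ gives $s' \in L$. Conversely every $s \in L$ is a stem for $\mathbb{R}^{\rm proj}_w$, so all its entries lie in $V_{\kappa_w} = V_\kappa$, whence $j(s) = s \in j(L)$ and $s \prec w_j$. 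Hence
\[
   \{ s' \in j(L) : s' \prec w_j \} = L.
\]

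Finally I would unwind the definition of the diagonal intersection: $w_j \in j(\Delta_s A_s)$ holds if and only if $w_j \in j(\langle A_s : s \in L \rangle)(s')$ for every $s' \in j(L)$ with $s' \prec w_j$. By the previous paragraph these $s'$ are precisely the members $s$ of $L$, and for such $s$ we have $j(\langle A_s : s \in L\rangle)(s) = j(\langle A_s : s \in L\rangle)(j(s)) = j(A_s)$. So the requirement becomes $w_j \in j(A_s)$ for all $s \in L$, which by the membership criterion is exactly the hypothesis $A_s \in \phi(w(\beta))$. This gives $w_j \in j(\Delta_s A_s)$, and therefore $\Delta_s A_s \in \phi(w(\beta))$, as required.

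I expect the main (though still routine) obstacle to be the middle step: checking carefully that the $j$-image of the index set $L$ acquires no new elements below the seed $w_j$, i.e.\ that every $s' \in j(L)$ with $s' \prec w_j$ is genuinely reflected down to a member of $L$. This is where the convention $\lambda = \kappa^+$ and the equality $\kappa_w = \crit(j)$ do the work, forcing all such stems into $V_\kappa$, where $j$ acts trivially; the remainder of the argument is bookkeeping identical to that of Lemma~\ref{sccdiagintlemma}.
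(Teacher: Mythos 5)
Your proof is correct and takes essentially the same approach as the paper: the paper's proof of this lemma is just the one-line remark that it is ``exactly like'' the proof of Lemma \ref{sccdiagintlemma}, whose argument (fix a constructing embedding, characterize membership in the measure via the seed --- here correctly replaced by $\phi(u_j\restriction\beta)$ for the Rudin--Keisler projection --- and identify the stems lying below the seed) is precisely what you have spelled out. The only cosmetic difference is that you make explicit the simplifying feature of the projected setting, namely that all relevant stems lie in $V_{\kappa}$ and are therefore fixed by $j$, so no type-changing analogue of the $j(w) \prec u_j\restriction\beta$ computation is needed; the paper leaves this implicit.
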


\begin{proof}  Exactly like the proof of Lemma \ref{sccdiagintlemma}.
\end{proof}

\begin{lemma} \label{sccprojprikrylemma}
 ${\mathbb R}_w^{\rm proj}$ satisfies the Prikry property, that is to say every sentence of the forcing language can be decided
 by a direct extension.
\end{lemma}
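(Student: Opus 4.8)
The plan is to transcribe the proof of the Prikry lemma for supercompact Radin forcing (Lemma \ref{sccprikrylemma}) essentially line by line, replacing its three structural ingredients by the projected analogues that are already available: the factoring Lemma \ref{sccprojfactor}, the addability Lemma \ref{projaddability}, and the normality/diagonal-intersection Lemma \ref{sccprojdiagintlemma}. The one real simplification is that ${\mathbb R}_w^{\rm proj}$ carries no type-changing maps, so the interpolation clauses of Definition \ref{sccprojextdef} reduce to the bare membership requirements $x^j \in B^i$ and $C^j \subseteq B^i$. Consequently every auxiliary object built in the supercompact proof has a projected counterpart obtained by erasing the maps $\pi_v$, $\rho_v$, $\sigma_{v' v}$: conditions such as $\pi_v(v') \in f(v)$ become $v' \in f(v)$, and sets such as $\{ x : \sigma_{v' v}(x) \in f(v) \}$ become $f(v) \cap V_{\kappa_{v'}}$.

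First I would reduce to the case of an empty stem. Given $p = \langle (w^0, B^0), \ldots, (w^m = w, B^m) \rangle$ with $m > 0$, I apply Lemma \ref{sccprojfactor} at the split $m-1$ to view the truth value of $\phi$ as an ${\mathbb R}^{\rm proj}_w \downarrow p^{> m-1}$-name for an ${\mathbb R}^{\rm proj}_{w^{m-1}} \downarrow p^{\le m-1}$-name for an element of $2$; since $w^{m-1} \prec w$ gives $\kappa_{w^{m-1}} < \kappa_w$ and $p^{>m-1}$ has empty stem, I shrink the top measure-one set of $p^{>m-1}$ to decide the inner name, and iterate downward $m$ times. For the empty-stem case $p = \langle (w, B) \rangle$, I let $I$ be the set of stems $s$ admitting some $B' \subseteq B$ with $s^\frown \langle (w, B') \rangle$ deciding $\phi$, choose witnesses $B_s$, and set $B_1 = \Delta_{s \in I} B_s$ by Lemma \ref{sccprojdiagintlemma}; then, per stem $s$, I partition $B_1$ into the parts forcing $\phi$, forcing $\neg \phi$, and neither upon interpolation, and diagonally intersect again to produce a set $B_2$ with the usual property that a decision made by some interpolation over $s$ is already forced using the whole of $B_2$ over $s$.

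The combinatorial heart — and the step I expect to be the main obstacle — is the concluding minimality argument. I take a deciding condition $t^\frown \langle (w, C) \rangle \le \langle (w, B_2) \rangle$ with $\lh(t)$ minimal and show $t$ is empty. Assuming $t = s^\frown \langle (v, D) \rangle$ forces $\phi$, I extract $\alpha$ and a function $f$ with $\dom(f)$ being $\phi(w(\alpha))$-large such that $s^\frown \langle (v', f(v')) \rangle^\frown \langle (w, B_2) \rangle$ forces $\phi$ for all $v' \in \dom(f)$, and build $B_3 = X \cup Y \cup Z$ so that every extension of $s^\frown \langle (w, B_3) \rangle$ is compatible with one of these conditions, contradicting minimality. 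Here $Y(v')$ is the set of $v \in \dom(f) \cap B_2$ with $v' \prec v$, $v' \in f(v)$, and $f(v) \cap V_{\kappa_{v'}}$ being $v'$-large; $X = \{ v' : Y(v') \text{ is } \phi(w(\alpha))\text{-large} \}$; $Y = \Delta_{v' \in X} Y(v')$; and $Z$ is the set of $w'$ for which $Y \cap V_{\kappa_{w'}}$ is $\phi(w'(\beta))$-large for some $\beta$ with $0 < \beta < \lh(w')$.

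I expect the largeness of $X$ to remain the delicate point, and I would prove it exactly as the corresponding claim of Lemma \ref{sccprikrylemma}: fix a constructing embedding $j$ for $u$ (where $w = \phi(u)$), put $B = j(f)(\,\cdot\,)$ at the generic representative of $w$, and apply the projected addability Lemma \ref{projaddability} inside the ultrapower to read off that $\phi(w(\alpha))$-many $v'$ satisfy the defining condition of $X$. The largeness of $Y$ is then immediate from Lemma \ref{sccprojdiagintlemma}, and $Z$ is handled as before, the simplification being that there is no $\rho$-map to track: since $\crit(j) = \kappa$ and $Y \subseteq V_\kappa$ we have $j(Y) \cap V_\kappa = Y$, so the pullback computing $Z$ reduces directly to the largeness of $Y$, giving $Z$ its largeness for every $\beta > \alpha$. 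The final case analysis on an arbitrary extension of $\langle (w, B_3) \rangle$ — either some entry already lies in $Y$ below entries in $X$, or one interpolates an element of $Y$ via Lemma \ref{projaddability} to reduce to that case — then goes through verbatim, and is noticeably cleaner than its supercompact model precisely because no type-changing bookkeeping intervenes.
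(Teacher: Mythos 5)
Your proposal is correct and takes essentially the same approach as the paper: the paper's proof of Lemma \ref{sccprojprikrylemma} is exactly the instruction to repeat the proof of Lemma \ref{sccprikrylemma}, using Lemma \ref{sccprojdiagintlemma} and the fact that $w = \phi(u)$ for some good $u$ with a constructing embedding, which is what you carry out (including the correct simplification that the maps $\pi_v$, $\rho_v$, $\sigma_{v'v}$ disappear, so interpolation reduces to bare membership and the ultrapower computations for $X$, $Y$, $Z$ go through with $w\restriction\alpha$ as the seed). The paper also sketches a second, alternative route via the weak projection (density of $\phi[{\mathbb R}^{\rm sup}_u]$ in the $\le^*$ ordering, after Lemma \ref{projthm}), but your translation is the intended primary argument.
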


\begin{proof} The proof is like the proof of Lemma \ref{sccprikrylemma}, using Lemma \ref{sccprojdiagintlemma}
   and the fact that $w = \phi(u)$ for some good sequence $u$ with a constructing embedding.
\end{proof}

   We sketch an alternative proof for Lemma \ref{sccprojprikrylemma} following the proof of Lemma \ref{projthm} in the
  next section.

\begin{lemma} \label{sccprojportmanteau}
   Let $w \in {\mathcal U}^{\rm proj}_\infty$ with $\lh(w) > 1$, and
  let $G$ be ${\mathbb R}_w^{\rm proj}$-generic over $V$.

\begin{enumerate}

\item  Let $\vec{x} = \langle x_\alpha : \alpha < \mu  \rangle$ enumerate
 $\{ x : \mbox{$x$ appears in some $p \in G$} \}$
so that   $x_\alpha \prec x_\beta$ for all $\alpha, \beta$ with $\alpha < \beta < \mu$.  Then
  $\mu$ is a limit ordinal with $\mu \le \kappa_w$, and
 $V[G] = V[\vec x]$.

\item Let $D = \{ x_\alpha(0) : \alpha < \mu \}$. Then $D$ is a club subset of $\kappa_w$.

\item  For every limit ordinal $\beta < \mu$, $\vec{x} \restriction \beta$ is ${\mathbb R}^{\rm proj}_{x_\beta}$-generic over $V$, and
 $\vec{x} \restriction [\beta, \mu)$ is ${\mathbb R}^{\rm proj}_w$-generic over $V[\vec{x} \restriction \beta]$.

\item Suppose $\gamma < \kappa$, $A \subseteq \gamma$, $A \in V[\vec{x}]$.
 Let $\beta < \mu$ be the largest ordinal such that $\kappa_{x_\beta} \le \gamma$. Then $A \in V[\vec{x} \restriction \beta]$.

\item The models $V$ and $V[G]$ have the same cardinals.

\end{enumerate}

\end{lemma}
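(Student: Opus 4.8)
The plan is to follow the template established in the proof of Lemma \ref{sccportmanteau}, substituting the projected analogues of the basic lemmas; because there are no type-changing maps $\pi_v$, $\rho_v$, $\sigma_{wv}$ to track, every step is strictly simpler than its supercompact counterpart. For part (1), one direction is trivial since $\vec x$ is definable from $G$. For the converse I would observe that a condition $p = \langle (w^0,B^0),\ldots,(w^m=w,B^m)\rangle$ lies in $G$ exactly when each $w^i$ occurs among the $x_\alpha$ and the sets $B^i$ (parameters from $V$) are respected by the interpolation rules of Definition \ref{sccprojextdef}; since these rules are evaluated in $V$, the filter $G$ is recoverable from $\vec x$, so $V[G]=V[\vec x]$. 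That $\mu$ is a limit follows from Lemma \ref{projaddability}: below the top entry $w$ one can always interpolate a further good pair, so by a density argument $\vec x$ has no last element, while $\alpha \mapsto \kappa_{x_\alpha}$ is strictly increasing into $\kappa_w$, whence $\mu \le \kappa_w$. Part (2) is precisely Lemma \ref{projclub}. For part (3), each limit stage $\beta$ carries an entry $x_\beta$ with $\lh(x_\beta)>1$, so Lemma \ref{sccprojfactor} factors ${\mathbb R}^{\rm proj}_w$ (below a condition of $G$ in which $x_\beta$ appears) as ${\mathbb R}^{\rm proj}_{x_\beta} \times {\mathbb R}^{\rm proj}_w$; the two coordinates of the induced generic are exactly $\vec x \restriction \beta$ and $\vec x \restriction [\beta,\mu)$.

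For part (4) I would fix $A \subseteq \gamma$ with a name $\dot A$ and choose a limit stage $\delta < \mu$ with $\kappa_{x_\delta} > \gamma$ (possible since $D$ is unbounded by part (2)). By part (3), $V[G] = V[\vec x \restriction \delta][H]$ with $H$ being ${\mathbb R}^{\rm proj}_w$-generic over $V[\vec x \restriction \delta]$. The point is that every entry the tail forcing can interpolate has critical point $> \gamma$: the relevant projected measures push forward to $\kappa_{x_\delta}$-complete nonprincipal measures on the ordinals below $\kappa_{x_\delta}$, so $\{v : \kappa_v > \gamma\}$ is measure one and we may shrink to it. By the Prikry property (Lemma \ref{sccprojprikrylemma}) each statement $\check\xi \in \dot A$ for $\xi<\gamma$ is decided by a direct extension, and by $\kappa_{x_\delta}$-completeness these $\gamma$-many decisions amalgamate, so the tail adds no subset of $\gamma$ and $A \in V[\vec x \restriction \delta]$. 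Since $\alpha \mapsto \kappa_{x_\alpha}$ is continuous at limits, running this at the largest limit $\beta \le \delta$ with $\kappa_{x_\beta}\le\gamma$ yields the stated initial segment.

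Part (5) is where the projected forcing differs sharply from ${\mathbb R}^{\rm sup}_u$: since each entry has $x_\alpha(0)=\kappa_{x_\alpha}$ an \emph{ordinal} rather than an element of some $P_\kappa\lambda$, no surjection collapsing an interval is ever added, so cardinals are preserved rather than collapsed. I would make this precise by induction, taking as hypothesis that ${\mathbb R}^{\rm proj}_v$ preserves all cardinals for every $v \in {\mathcal U}^{\rm proj}_\infty$ with $\kappa_v < \kappa_w$. Cardinals $> \kappa_w$ are preserved by the $\kappa_w^+$-c.c. For cardinals $\le \kappa_w$: by part (4) every bounded subset of $\kappa_w$ already lies in some $V[\vec x \restriction \beta]$ with $\beta < \mu$, an ${\mathbb R}^{\rm proj}_{x_\beta}$-extension preserving all cardinals by the inductive hypothesis, so every cardinal $< \kappa_w$ is preserved. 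Finally $\kappa_w$ itself survives: a collapse $|\kappa_w|^{V[G]} = \tau < \kappa_w$ would be coded by a well-ordering of some $\tau$ of type $\kappa_w$, hence by a bounded subset of $\kappa_w$, which by part (4) lies in some $V[\vec x \restriction \beta]$; as $\kappa_w$ is inaccessible in $V$ we have $\kappa_{x_\beta}^+ < \kappa_w$, so the $\kappa_{x_\beta}^+$-c.c. forcing ${\mathbb R}^{\rm proj}_{x_\beta}$ preserves $\kappa_w$, a contradiction.

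The main obstacle is part (4): showing the tail forcing adds no new bounded subsets of $\kappa_w$. This rests on combining the Prikry property with the completeness of the projected measures, which is exactly the place where one must verify that forming the Rudin--Keisler projection via $\phi$ retains enough completeness to run the usual Radin capturing argument. Everything in part (5) is then bookkeeping built on part (4), the one genuinely new feature being that the absence of set-valued entries removes all collapsing.
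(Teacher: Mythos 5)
Your overall plan is the paper's plan: the paper proves this lemma with the single sentence ``the proof is like the proof of Lemma \ref{sccportmanteau}, using Lemma \ref{sccprojprikrylemma}'', and your parts (1), (2), (3) and (5) expand exactly the ingredients that proof invokes (addability and density for (1), Lemma \ref{projclub} for (2), Lemma \ref{sccprojfactor} for (3), and the chain condition together with the bounded-set analysis and an induction on critical points for (5)). Those parts are fine, and in most places more detailed than the paper itself.

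Part (4), however, contains a step that fails as written. Let $\beta^{\lim}$ be the largest limit ordinal with $\kappa_{x_{\beta^{\lim}}}\le\gamma$. ``Running this'' (your completeness amalgamation) at $\beta^{\lim}$ does not work: the tail forcing over $V[\vec x\restriction\beta^{\lim}]$ still admits entries $v$ with $\kappa_{x_{\beta^{\lim}}}<\kappa_v\le\gamma$ (the generic entries $x_{\beta^{\lim}+1},\dots,x_\beta$ are of this kind), and their measures are only $\kappa_v$-complete with $\kappa_v\le\gamma$, so the $\gamma$-many decisions no longer amalgamate. What your argument actually establishes is only $A\in V[\vec x\restriction\delta]$, where $\delta=\beta^{\lim}+\omega$ is the least limit with $\kappa_{x_\delta}>\gamma$; this is strictly weaker than the statement, since $\vec x\restriction[\beta^{\lim},\delta)$ is a new $\omega$-sequence cofinal in $\kappa_{x_\delta}$ and so $V[\vec x\restriction\delta]\supsetneq V[\vec x\restriction\beta]$. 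The standard repair: $\beta$ and $\beta^{\lim}$ differ by finitely many successor stages, and successor-stage entries have length $1$ (by Lemma \ref{projaddability}, an entry of length $>1$ sitting at stage $\alpha$ forces stage $\alpha$ to be a limit), hence carry empty attached sets and are elements of $V$; consequently $V[\vec x\restriction\beta]=V[\vec x\restriction\beta^{\lim}]$. Now work below a condition of $G$ whose stem contains $x_{\beta^{\lim}},x_{\beta^{\lim}+1},\dots,x_{\beta+1}$: because the intermediate entries have empty attached sets, nothing can ever be interpolated between $x_{\beta^{\lim}}$ and $x_{\beta+1}$, so the forcing below this condition splits as a product of a part with all critical points $\le\kappa_{x_{\beta^{\lim}}}$ (which adds $\vec x\restriction\beta^{\lim}$) and a part in which every measure is $\kappa_v$-complete for some $\kappa_v>\kappa_{x_{\beta+1}}>\gamma$. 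Running the Prikry-plus-completeness argument for this product \emph{in $V$}, coordinate by coordinate (rather than citing the Prikry property over the intermediate model, where it has not been proved --- a point your first half also elides), puts $A$ in $V[\vec x\restriction\beta^{\lim}]=V[\vec x\restriction\beta]$ as required. Note finally that for your part (5), and for the paper's application in Theorem \ref{mainthm}, the weaker statement you did prove (with $\delta$ in place of $\beta$) would already suffice.
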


\begin{proof}
   The proof is like the proof of Lemma \ref{sccportmanteau}, using Lemma \ref{sccprojprikrylemma}.
\end{proof}

\begin{lemma} \label{newsccpres}

Let GCH hold and let  $j:V \rightarrow M$ witness $\kappa$ is $\mu$-supercompact for some $\mu \ge \kappa^{+3}$. Let
 $v\in {\mathcal U}^{\rm sup}_\infty$ be a $(\kappa, \kappa^+)$-measure sequence constructed from $j$
 which has a weak repeat point $\alpha$. Let $u = v \restriction \alpha$ and let $G^{\phi}$ be ${\mathbb R}_{\phi(u)}^{\rm proj}$-generic over $V$.
 Then in $V[G^\phi]$, $\kappa$ remains $\mu$-supercompact.

\end{lemma}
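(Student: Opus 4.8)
The plan is to follow the proof of Lemma \ref{sccpres} almost verbatim, building inside $V[G^\phi]$ a normal $\kappa$-complete fine ultrafilter $U$ on $P_\kappa \mu$ by lifting $j$ through the projected forcing. The construction is in fact \emph{simpler} than for ${\mathbb R}^{\rm sup}_u$, because the entries of a projected condition carry no collapsing information in their first coordinate, so none of the type-changing maps $\pi_v$, $\rho_v$, $\sigma_{vw}$ is needed. Throughout I use that $\phi$ is given by an absolute formula in $\kappa_u$ and the measures $u(\zeta)$, so that $j(\phi(u)) = \phi(j(u))$ and, more generally, that $j$ commutes with $\phi$. I also use that under $GCH$ the forcing ${\mathbb R}^{\rm proj}_{\phi(u)}$ has size $\kappa^+$ and every subset of $P_\kappa \mu$ admits a name of size at most $\mu$, so that $M \supseteq {}^\mu V$ contains all such names and agrees with $V$ on ${\mathbb R}^{\rm proj}_{\phi(u)}$ and on $P(S(\kappa,\kappa^+))$.

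The one genuinely new ingredient is the projected analogue of Claim \ref{ernie}: if $B$ is $\phi(u)$-large, i.e.\ $B \in \phi(u(\beta))$ for all $\beta$ with $0 < \beta < \alpha$, then $\phi(u) \in j(B)$. Indeed, by definition of the Rudin--Keisler projection $B \in \phi(u(\beta))$ means exactly that the pullback $\phi^{-1}[B] = \{ v \in S(\kappa, \kappa^+) : \phi(v) \in B \}$ lies in $u(\beta)$, so $\phi^{-1}[B]$ is $u$-large in the sense of ${\mathbb R}^{\rm sup}_u$, and Claim \ref{ernie} (in the proof of Lemma \ref{sccpres}) gives $u \in j(\phi^{-1}[B])$. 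Since $j$ commutes with $\phi$ we have $j(\phi^{-1}[B]) = \{ v : \phi(v) \in j(B) \}$, and evaluating $\phi$ (computed identically in $M$) at $u$ yields $\phi(u) \in j(B)$. Writing a condition $p \in {\mathbb R}^{\rm proj}_{\phi(u)}$ uniquely as $p = p_d{}^\frown \langle (\phi(u), B) \rangle$ and noting $j(p_d) = p_d$ because $p_d \in V_\kappa$, this shows that
\[
   p^* = p_d{}^\frown \langle (\phi(u), B) \rangle^\frown \langle (\phi(j(u)), j(B)) \rangle
\]
is a legitimate condition of $j({\mathbb R}^{\rm proj}_{\phi(u)}) = {\mathbb R}^{\rm proj}_{\phi(j(u))}$ with $p^* \le j(p)$, and that $p \mapsto p^*$ is order preserving. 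For $E$ which is $\phi(j(u))$-large in the projected sense I set $p^* \restriction E = p_d{}^\frown \langle (\phi(u), B) \rangle^\frown \langle (\phi(j(u)), j(B) \cap E) \rangle$, and define $U$ on $P_\kappa \mu$ by declaring $X \in U$ if and only if there are $p \in G^\phi$ and a $\phi(j(u))$-large $E$ with $p^* \restriction E \Vdash j[\mu] \in j(\dot X)$.

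It then remains to verify that $U$ is a normal $\kappa$-complete fine ultrafilter, and these verifications run exactly parallel to the corresponding claims in Lemma \ref{sccpres}, with the projected lemmas substituted for their supercompact counterparts. Independence of the choice of name follows by strengthening $p$ to force $\dot X_1 = \dot X_2$ and invoking elementarity. That $U$ is an ultrafilter, normal and $\kappa$-complete is obtained by the same arguments: one appeals to the projected Prikry property (Lemma \ref{sccprojprikrylemma}) together with the factorisation Lemma \ref{sccprojfactor} to arrange, below a condition with trivial stem whose top two entries are $\phi(u)$ (with maximal upper part) and $\phi(j(u))$ (with upper part $E$), that the relevant truth value — of $j[\mu] \in j(\dot X)$, or of $j(\dot F)(j[\mu]) = j(\eta)$ in the normality argument — is reduced to a truth value for the ground-model forcing, whereupon some $p \in G^\phi$ decides it; normality additionally uses $\mu < j(\kappa)$ and the $j(\kappa)$-completeness of the relevant measures. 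Fineness is immediate, since $p^* \Vdash j[\mu] \in j(\dot X)$ whenever $X = \{ x : \eta \in x \}$.

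I expect the main obstacle to be precisely the projected form of Claim \ref{ernie}, together with the bookkeeping that justifies $j(\phi(u)) = \phi(j(u))$ and the $M$-agreement facts; once these are in place the measure-theoretic verifications transfer mechanically from Lemma \ref{sccpres}, the essential simplification being that the absence of type-changing maps removes the only real complication present in the supercompact Radin case. The conclusion is that $U$ witnesses $\mu$-supercompactness of $\kappa$ in $V[G^\phi]$.
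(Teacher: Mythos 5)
Your proposal is correct and follows essentially the same route as the paper: the paper's own proof of this lemma is a single sentence deferring to Lemma \ref{sccpres} together with Lemmas \ref{sccprojfactor} and \ref{sccprojprikrylemma}, and your write-up is precisely that transfer, with the projected analogue of Claim \ref{ernie} (obtained via the Rudin--Keisler pullback $\phi^{-1}[B]$ and the commutation of $j$ with $\phi$) correctly identified as the only genuinely new ingredient. Your observation that the argument simplifies because the stems of projected conditions lie in $V_\kappa$ (so $j(p_d)=p_d$ and no type-changing maps are needed) is also accurate.
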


\begin{proof}
  The proof is similar to that of Lemma \ref{sccpres}, using Lemmas \ref{sccprojfactor} and \ref{sccprojprikrylemma}.
\end{proof}

\subsection{Weak projection} \label{sccweakproj}

Suppose that $u \in {\mathcal U}^{\rm sup}_\infty$ and consider the forcing notions ${\mathbb R}^{\rm sup}_{u}$ and
    ${\mathbb R}_{\phi(u)}^{\rm proj}$.
 We define a map $\phi: {\mathbb R}^{\rm sup}_u \rightarrow  {\mathbb R}_{\phi(u)}^{\rm proj}$
in the natural way,  by
\begin{multline}
 \phi: \langle (u^0, A^0), \ldots, (u^i, A^i), \ldots, (u^n, A^n) \rangle \mapsto \\
 \langle (\phi(u^0), \phi(A^0)), \ldots, (\phi(u^i), \phi(A^i)), \ldots, (\phi(u^n), \phi(A^n)) \rangle
\end{multline}

   The map $\phi$ is possibly not a projection in the classical sense. The problem (in a representative special case)
    is that when we extend a condition
   $\phi(p)$ by drawing a $\prec$-increasing sequence $w^0 \prec \ldots \prec w^n$ from
   $\phi(A)$ where $(u, A)$ is the last pair in $p$, so that each sequence $w^i$ is of form $\phi(v)$ for some $v \in A$,
   there may not exist a  $\prec$-increasing sequence $v^0 \prec \ldots \prec v^n$ from $A$ with
   $\phi(v^i) = w^i$.

   We will show that $\phi$ has a weaker property, introduced by Foreman and Woodin \cite{foreman-woodin},
   which is sufficient for our purposes.

\begin{definition} \label{weakprojdef} Let $\mathbb P$ and $\mathbb Q$ be forcing posets.
 $\psi:\mathbb{P} \rightarrow \mathbb{Q}$ is a {\em weak projection} if and only if $\psi$ is order preserving and
 for all $p \in \mathbb{P}$ there is $p^* \le p$ such that for all $q \le \psi(p^*)$ there exists $p' \le p$ such that $\psi(p') \le q$.
\end{definition}

   It is easy to see that if $\psi :{\mathbb P} \rightarrow {\mathbb Q}$ is a weak projection and $G$ is a $\mathbb P$-generic filter, then
   $\psi[G]$ generates a $\mathbb Q$-generic filter.

\begin{lemma} \label{projthm}

$\phi: {\mathbb R}^{\rm sup}_u \rightarrow {\mathbb R}^{\rm proj}_{\phi(u)}$ is a weak projection,
  and in fact satisfies a stronger property:
 for all $p \in {\mathbb R}^{\rm sup}_u$ there is $p^* \le^* p$ such that for all $q \le \phi(p^*)$
  there is $p' \le p$ such that $\phi(p') \le^* q$.

\end{lemma}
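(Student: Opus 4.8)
The plan is to prove the two clauses together: order-preservation is routine, and the substantive content is producing, for each $p$, a direct extension $p^* \le^* p$ with the stated lifting property. First I would check that $\phi$ is order-preserving. The one delicate point is that the supercompact extension relation (Definition \ref{sccradinextn}) involves the type-changing maps, whereas the projected relation (Definition \ref{sccprojextdef}) does not; but these maps are invisible to $\phi$, since $\phi\circ\rho_v = \phi\circ\sigma_{wv} = \phi\circ\pi_v = \phi$ on the relevant domains. Indeed each of $\rho_v,\sigma_{wv},\pi_v$ alters only the $(0)$-entry of a sequence while preserving its critical point $\kappa_x$, and $\phi$ records exactly $\kappa_x$ in place of $x(0)$. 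Combined with the fact that $\phi(u(\alpha))$ is by definition the Rudin--Keisler projection (so $\phi$ sends $u$-large sets to $\phi(u)$-large sets), conditions \ref{sccext2a}, \ref{sccext2b} translate directly into condition \ref{sccprojext2}, giving $q\le p \Rightarrow \phi(q)\le\phi(p)$.

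For the main property I would first reduce to the case of empty stem. Using the factoring Lemmas \ref{sccfactor} and \ref{sccprojfactor} and the fact that $\phi$ commutes with the factor isomorphisms, an induction on the stem length lets me concentrate on interpolation below the top pair; so I may take $p=\langle(u,A)\rangle$. A condition $q\le\phi(p^*)=\langle(\phi(u),\phi(A^*))\rangle$ then has new entries forming a $\prec$-increasing sequence $x^0\prec\dots\prec x^{r-1}$ drawn from $\phi(A^*)$, and I must find $\prec$-increasing $v^0\prec\dots\prec v^{r-1}$ in $A$ with $\phi(v^k)=x^k$ and suitable good pairs $(v^k,B^k)$. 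The obstruction flagged in the paper is precisely the $\subseteq$-clause of $\prec$: the projected ordering remembers only critical points. The key idea is to lift \emph{top-down} using the $\rho$-maps: given $v^{k+1}$, I realize $v^k$ as $\rho_{v^{k+1}}(x)$ for a suitable $x$ in the local space $S(\kappa_{v^{k+1}},\lambda_{v^{k+1}})$; then automatically $v^k(0)=\rho_{v^{k+1}}[x(0)]\subseteq v^{k+1}(0)$, so $v^k\prec v^{k+1}$, while $\phi(v^k)=\phi(x)$. Thus the whole obstruction disappears provided the local $\phi$-traces of $A$ are rich enough to hit the prescribed targets at every node of the descending recursion.

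The crux is therefore the construction of $A^*$, and I would do it by a depth-indexed fusion. Set $A_0=A$ and let $A_{n+1}$ be the set of $v'\in A_n$ such that every $w\in\phi[A_n]$ with $\kappa_w<\kappa_{v'}$ is $\phi(v'')$ for some $v''\prec v'$ in $A_n$ (folding in, via Corollary \ref{addability1point5}, the requirement that $v'$ be addable). Each $A_{n+1}$ is $u$-large by a reflection through a constructing embedding $j$: at the generic point $u_j\restriction\beta$ one has $\rho_{u_j\restriction\beta}=j\restriction\lambda$, elements $\prec u_j\restriction\beta$ in $j(S(\kappa,\lambda))$ are exactly the $j(x)$ for $x\in S(\kappa,\lambda)$ (as in the proofs of Lemmas \ref{addability1} and \ref{sccdiagintlemma}), and $\phi(j(x))=\phi(x)$ since the higher measures $x(\beta)\in V_\kappa$ are fixed by $j$; hence the defining requirement at $u_j\restriction\beta$ collapses to the tautology that every $w\in\phi[A_n]$ equals $\phi(x)$ for some $x\in A_n$. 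Countable completeness then makes $A^*=\bigcap_n A_n$ a $u$-large set, and I take $p^*=\langle(u,A^*)\rangle$ (in the general case, shrinking each upper part, so $p^*\le^* p$).

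Finally I would run the lift. Since the targets satisfy $x^k\in\phi[A^*]\subseteq\phi[A_n]$ for every $n$ (as $A^*\subseteq A_n$), I choose $v^{r-1}\in A_{r-1}$ in the $\phi$-fiber of $x^{r-1}$ and descend: at $v^{k+1}\in A_{k+1}$ the defining property of $A_{k+1}$ (applied to the target $x^k\in\phi[A_k]$) yields $v^k\in A_k$ with $v^k\prec v^{k+1}$ and $\phi(v^k)=x^k$, ending at $v^0\in A_0=A$. It is exactly this depth-indexing that lets the recursion run its $r$ steps while sidestepping the circular dependence of a naive ``$A^*$ realizes its own $\phi$-image'' requirement. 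For the upper parts I set $B^k=\{y:\rho_{v^k}(y)\in A,\ \phi(y)\in C^k\}$, which is $v^k$-large with $\phi(B^k)\subseteq C^k$ and $\rho_{v^k}[B^k]\subseteq A$; I take $v^r=u$ with $B^r=\phi^{-1}[C^r]\cap A$, and at the positions $j_l$ inherited from $p$ I keep $v^{j_l}=u^l$ with $B^{j_l}=\phi^{-1}[C^{j_l}]\cap A^l$. The resulting $p'=\langle(v^k,B^k):k\le r\rangle$ satisfies $p'\le p$ (the new $v^k$ lie in the original upper parts with $\rho$-traces inside them) and $\phi(p')\le^* q$ (same length, matching measure sequences, shrunk upper parts), as required. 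The hard part is the third paragraph: arranging the local traces to realize all prescribed targets simultaneously to unbounded finite depth, where the depth-indexed fusion together with the reflection identity $\rho_{u_j\restriction\beta}=j$ is what makes the construction go through.
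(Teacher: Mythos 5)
Your proposal is correct and follows essentially the same route as the paper's proof: your depth-indexed operation $A_n \mapsto A_{n+1}$ is exactly the paper's realization operation $A \mapsto A'$ of Claim \ref{bigbird}, your fusion $A^* = \bigcap_n A_n$ is the paper's $A^{(\omega)}$, and your top-down lifting recursion (choosing the lift at level $k$ inside $A_k$ using the defining property of $A_{k+1}$, with addability folded in via Corollary \ref{addability1point5}) matches the paper's downward induction and its choice of the sets $C^i$. The one repair needed is in your definition of $A_{n+1}$: the realization requirement should be imposed for those $w \in \phi[A_n]$ with $w \in V_{\kappa_{v'}}$ (equivalently $w \prec \phi(v')$ in the projected ordering, which is what the paper's formulation $\phi(x) \prec \phi(w)$ amounts to), not merely $\kappa_w < \kappa_{v'}$; with your weaker hypothesis the reflection step is not the tautology you claim, since $j(A_n)$ may contain sequences $y$ with $\kappa_y < \kappa$ but $\lh(y) \geq \kappa$, whose projections have length $\geq \kappa$ and so cannot equal $\phi(v'')$ for any $v'' \prec u_j \restriction \beta$, all of which have length below $\kappa$ --- whereas your lifting recursion only ever invokes the $V_{\kappa_{v'}}$-version of the property, so the fix costs nothing.
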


\begin{proof} It is easily seen that $\phi$ preserves
  both the ordering $\le$ and the direct extension ordering $\le^*$.

\begin{claim} \label{bigbird}

 Suppose that $(u, A)$ is a good pair. Let $A'$ be the set of
   $w \in A$ such that
 for all $x \in A$ with $\phi(x) \prec \phi(w)$
   there is $\bar{x} \in A$ such that $\phi(\bar{x})= \phi(x)$ and $\bar{x} \prec w$.
 Then $A'$ is $u$-large.

\end{claim}

\begin{proof} Suppose that  $j: V \rightarrow M$ constructs $u$ and let $0< \alpha < \lh(u)$.
 We need to show that $A' \in u(\alpha)$ or equivalently $u_j \restriction \alpha \in j(A')$.
Thus we need to prove the following:

\smallskip

\noindent $(*)$ If $x \in j(A)$ and $\phi(x) \prec \phi(u_j \restriction \alpha)$,
 then there is $\bar{x} \in j(A)$ such that $\phi(\bar{x})=\phi(x)$ and
  $\bar{x} \prec u_j \restriction \alpha$.

\smallskip

   Since $\phi(x) \prec \phi(u_j \restriction \alpha)$, $\phi(x) \in V_{\kappa_u}$, and it follows that
 we may choose  $\bar{x} \in \rge(j)$ such that $\bar{x} \in j(A)$ and $\phi(\bar x) = \phi(x)$.
 Since $\bar{x}\in \rge(j)$ we have $\bar{x} = j(z)$ for some $z \in A$,
 and then as in Lemma \ref{addability1} we have that $\bar{x} \prec u_j \restriction \alpha$.

\end{proof}

We are now ready to complete the proof of Lemma \ref{projthm} by showing that
 for all $p \in {\mathbb R}^{\rm sup}_u$ there is $p^* \le^* p$ such that for all $q \le \phi(p^*)$
  there is $p' \le p$ such that $\phi(p') \le^* q$.
 Using the factorization property from Lemma
   \ref{sccfactor}, it is sufficient to prove this  for the special case where $p$ has the form $\langle (u,A) \rangle$.

   Let us say that a sequence $w$ is {\em addable} to the good pair $(u, A)$ if
   there exists $B$  such that $\langle (w, B) , (u, A) \rangle$ is a condition
   extending $(u, A)$ in ${\mathbb R}^{\rm sup}_u$.
 By  Lemma  \ref{addability1point5} we may assume, without loss of generality, that every
 $v \in A$ is  addable to $(u, A)$.

 We will proceed by iterating the map $A \mapsto A'$ from  Claim \ref{bigbird}.
Let $A^{(\omega)}= \bigcap_{n < \omega} A^{(n)}$ where  $A^{(0)}=A$ and $A^{(n+1)}=(A^{(n)})'$ for all $n < \omega$.
We will show that $p^*= \langle (u,A^{(\omega)}) \rangle$ is as required. Let
\[
    q=\langle (w^0, B^0), \ldots, (w^i, B^i), \ldots, (w^n, B^n) \rangle \in {\mathbb R}^{\rm proj}_{\phi(u)},
\]
 with $q \le \phi(p^*)$. We will find a sequence of good pairs $(u^i, C^i)$ for $0 \le i \le n$, such that:
\begin{enumerate}
\item $u^n = u$.
\item $u^0  \prec u^1 \prec \ldots \prec u^{n-1}$.
\item $u^i \in S(\kappa_{u^n}, \lambda_{u^n})$.
\item $\phi(u^i) = w^i$ and $\phi(C^i) \subseteq B^i$ for $0 \le  i \le n$.
\item $u^i \in A^{(i)}$ for $i < n$.
\item $\langle  (u^i, C^i), (u, A) \rangle \le p = \langle (u, A) \rangle$
\end{enumerate}

 We choose $(u^{n-i}, C^{n-i})$ by induction on $i$ for $0 \le i \le n$.
 For $i = 0$ let $u^n = u$ and let $C^n \subseteq B$ be such that $\phi(C^n) \subseteq B^n$.
 For $i =1$ choose any $v \in A^{(\omega)}$ such that $\phi(v) = w^{n-1}$,
 and note that $v \in A^{(n-1)}$ and  $v$ is addable to $(u, A)$; set $u^{n-1} = v$ and choose
 $C^{n-1}$ such that
  $\langle  (u^{n-1}, C^{n-1}), (u, A) \rangle \le \langle (u, A) \rangle$
 and $\phi(C^{n-1}) \subseteq B^{n-1}$.

 Suppose now that $1 < i < n$ and we have chosen $(u^{n-i}, C^{n-i})$.
  Choose $v \in A^{(\omega)}$ such that $\phi(v) = w^{n-i-1}$.
  Since $u^{n-i} \in  A^{(n-i)} = (A^{(n-i-1)})'$  and $v \in A^{n-i-1}$,
   we can find $u^{n-i-1} \in A^{n-i-1}$ such that $\phi(u^{n-i-1}) = w^{n-i-1}$
  and $u^{n-i-1} \prec u^{n-i}$. Since $u^{n-i-1} \in A$ it is addable to
  $(u, A)$, hence we may find $C^{n-i-1}$ such that
  $\langle  (u^{n-i-1}, C^{n-i-1}), (u, A) \rangle \le \langle (u, A) \rangle$
 and $\phi(C^{n-i-1}) \subseteq B^{n-i-1}$.

Let $p' = \langle (u^0, C^0), \ldots, (u^i, C^i), \ldots, (u^n, C^n) \rangle$. Then $p' \in {\mathbb R}^{\rm sup}_u$,
  $p' \le p$ and $\phi(p') \le^* q$.

\end{proof}

Lemma \ref{projthm} allows us to give an alternative proof of Lemma \ref{sccprojprikrylemma}, the Prikry lemma
   for ${\mathbb R}^{\rm proj}_w$. We choose some $u$ such that $\phi(u) = w$ and argue that
   $\phi[ {\mathbb R}^{\rm sup}_u]$ is dense in ${\mathbb R}^{\rm proj}_{\phi(u)}$ under the direct extension
   relation $\le^*$. Since $\phi$ preserves the direct extension relation, it is now fairly straightforward to
   argue that the Prikry lemma for ${\mathbb R}^{\rm sup}_u$ implies the Prikry lemma for  ${\mathbb R}^{\rm proj}_{\phi(u)}$.

\begin{remark}   If $\langle u^i : i < \mu \rangle$ is the generic sequence for ${\mathbb R}^{\rm sup}_u$ added by $G$, then
  it is easy to see that $\langle \phi(u^i) : i < \mu \rangle$ is the generic sequence for ${\mathbb R}^{\rm proj}_{\phi(u)}$ added by $G^\phi$.
\end{remark}

\subsection{Homogeneity property}

\begin{lemma} \label{permutationfact}
  Let $u \in {\mathcal U}^{\rm sup}_\infty$.
  For all $p, q \in {\mathbb R}^{\rm sup}_{u}$, if $\phi(p)$ and $\phi(q)$ are compatible in the $\le^*$ ordering,
    then there exist $p^* \le^* p$ and  $q^* \le^* q$  such that
    ${\mathbb R}^{\rm sup}_u \downarrow p^* \simeq {\mathbb R}^{\rm sup}_u \downarrow q^*$.
\end{lemma}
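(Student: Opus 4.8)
The plan is to realise the required isomorphism as the action of a single permutation of $\lambda$ fixing every ordinal below $\kappa$, exactly as in the supercompact Prikry case sketched in Section~\ref{supercompactprikry}. First I would unpack the hypothesis: since $\le^*$ never alters the sequence of measure sequences in a condition, $\phi(p)$ and $\phi(q)$ can be $\le^*$-compatible only if they have the same length, say $p=\langle(u^i,A^i):i\le n\rangle$ and $q=\langle(v^i,B^i):i\le n\rangle$ with $\phi(u^i)=\phi(v^i)$ for all $i$. Thus $u^n=v^n=u$, each $\kappa_{u^i}=\kappa_{v^i}$, the higher measures agree ($u^i(\zeta)=v^i(\zeta)$ for $0<\zeta<\lh(u^i)$), and only the ``types'' $u^i(0),v^i(0)\in A(\kappa,\lambda)$ may differ, although they share order type $\kappa_{u^i}^+$ and satisfy $u^0(0)\subseteq\dots\subseteq u^{n-1}(0)$ (and likewise for $v$).

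The engine of the proof is that the measures are invariant under such permutations. Writing $\nu_*(x)=\langle\nu[x(0)]\rangle^\frown\langle x(\zeta):0<\zeta<\lh(x)\rangle$ for the type-changing action, I would prove: if $j$ constructs $u$ and $\nu$ is any bijection of $\lambda$ with $\nu\restriction\kappa=\mathrm{id}$, then every $u(\zeta)$ is $\nu_*$-invariant. This is immediate from $X\in u(\zeta)\iff u_j\restriction\zeta\in j(X)$ together with the computation $(j(\nu))_*(u_j\restriction\zeta)=u_j\restriction\zeta$, which holds because $u_j(0)=j[\lambda]$ and $j(\nu)[j[\lambda]]=j[\nu[\lambda]]=j[\lambda]$. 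The same argument applies verbatim to the measures living over each $S(\kappa_x,\lambda_x)$ and to bijections of $\lambda_x$ fixing $\kappa_x$. I record two consequences: $\nu_*$ sends $u$-large sets to $u$-large sets, and (because the definition of constructing embedding permits $u(0)\neq u_j(0)$, the point flagged after Lemma~\ref{gettingulemma}) $\nu_*$ preserves $\mathcal{U}^{\rm sup}_\infty$ whenever it preserves the type $A(\kappa,\lambda)$; in particular the set of $x$ on which $\nu_*$ is ``type-correct'' is $u$-large.

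Next I would build the permutation. Because every nonempty difference $u^i(0)\setminus u^{i-1}(0)$ has cardinality $\kappa_{u^i}^+$, matching that of $v^i(0)\setminus v^{i-1}(0)$, and because the two nested chains agree below $\kappa$, there is a bijection $\pi$ of $\lambda$ with $\pi\restriction\kappa=\mathrm{id}$ and $\pi[u^i(0)]=v^i(0)$ \emph{as sets} for every $i<n$; crucially $\pi$ need not be order preserving on any $u^i(0)$, only setwise correct, and since $v^i(0)$ already has the right order type we obtain $\pi_*(u^i)=v^i$ directly. This dissolves the apparent coherence obstacle, which only arises if one insists on order isomorphisms. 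The permutation $\pi$ then induces, at each stem level, a canonical bijection $\pi^{(i)}=\pi_{v^i}\circ\pi\circ\rho_{u^i}$ of $\lambda_{u^i}$ (fixing $\kappa_{u^i}$), and at each entry $x\prec u^i$ a further bijection $\tau^x$ of $\lambda_x$ determined by $\rho_{\pi_*(x)}\circ\tau^x=\pi\circ\rho_x$; these are designed so that the identities $\sigma_{wv}=\pi_v\circ\rho_w$ of Definition~\ref{sigma} yield the commutations $\pi_{v^i}\circ\pi_*=\pi^{(i)}_*\circ\pi_{u^i}$ and $\rho_{\pi_*(w)}\circ\tau^w_*=\pi_*\circ\rho_w$ needed to transport the two clauses of the extension relation in Definition~\ref{sccradinextn}.

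Finally I would shrink and assemble. Using the invariance lemma at each of the finitely many levels, I replace each upper part $A^i$ by a $u^i$-large subset $A^i_*\subseteq A^i\cap(\pi^{(i)}_*)^{-1}[B^i]$ on which all the relevant local permutations are type-correct; this gives a direct extension $p^*\le^*p$, and setting $B^i_*=\pi^{(i)}_*[A^i_*]\subseteq B^i$ gives $q^*\le^*q$. The map sending a condition $r\le p^*$ to the condition obtained by applying $\pi_*$ to every entry and the induced local permutations $\pi^{(i)}_*,\tau^x_*$ to every upper part is then order preserving, carries $p^*$ to $q^*$, and has inverse induced by $\pi^{-1}$; hence it is the desired isomorphism ${\mathbb R}^{\rm sup}_u\downarrow p^*\simeq{\mathbb R}^{\rm sup}_u\downarrow q^*$. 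The real work I expect in this last step is verifying that the induced local permutations make \emph{every} membership constraint $\pi_{u^i}(\cdot)\in A^i$ and image constraint $\sigma_{\cdot\,u^i}[\cdot]\subseteq A^i$ transport correctly, and that the type-correct sets used in the shrinking are genuinely $u^i$-large so that we remain within the direct-extension ordering; this is precisely where the permutation-invariance of the measures is used repeatedly, in concert with the elementary identities for $\pi_v$, $\rho_v$ and $\sigma_{wv}$.
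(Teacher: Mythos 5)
Your proposal is correct and is essentially the paper's own argument: the paper likewise unpacks $\le^*$-compatibility into ``same length, same $\phi$-images,'' builds a permutation $\tau$ of $\kappa_u^+$ fixing $\kappa_u$ pointwise with $\tau(u^i(0))=v^i(0)$, induces the local permutations $\tau_i=\pi_{v^i}\circ\tau\circ\rho_{u^i}$ (your $\pi^{(i)}$), and derives every largeness claim from exactly your invariance computation ($j(\tau)$ fixes $u_j\restriction\zeta$ because $u_j(0)=j[\kappa^+]$ is closed under $j(\tau)$, which is the paper's Claim \ref{swedishchef}). The only divergence is bookkeeping in the shrinking step: where you take type-correct preimage sets $A^i_*\subseteq A^i\cap(\pi^{(i)}_*)^{-1}[B^i]$ and push upper parts forward via $B^i_*=\pi^{(i)}_*[A^i_*]$ and the induced maps $\tau^x_*$, the paper instead intersects down to the common \emph{fixed-point} sets $C^i=A^i\cap B^i\cap\{y:\tau_i(y)=y\}$, so that $p^*$ and $q^*$ carry identical upper parts, the isomorphism leaves every measure-one set untouched, interpolated entries move by the order isomorphism $y\mapsto\rho_{v^i}(\pi_{u^i}(y))$, and the identity $\sigma_{y u^i}=\sigma_{\tau(y)v^i}$ holds on the nose --- this eliminates in advance the ``real work'' you defer at the end, since no type-correctness of pushforwards has to be propagated down through the levels of a condition.
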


\begin{proof}

We give the proof in a sequence of claims. We will ultimately induce an isomorphism between the cones
${\mathbb R}^{\rm sup}_u \downarrow p^*$ and ${\mathbb R}^{\rm sup}_u \downarrow q^*$ using a permutation
  of $\kappa_u^+$, and so we begin with a general discussion of such permutations.

 Let $\tau$ be a permutation of $\kappa^+$ with $\tau \restriction \kappa = id$ , then (in an abuse of notation) we
  define $\tau(y) = \tau[y]$ for $y \in P_\kappa \kappa^+$. Clearly
  $\tau(y) \in P_\kappa \kappa^+$. If $y \in A(\kappa, \kappa^+)$ then
  $y \cap \kappa = \tau(y) \cap \kappa$, but possibly $\ot(y) \neq \ot(\tau(y))$ and in this case
  $\tau(y) \notin A(\kappa, \kappa^+)$.
  However $\{ y \in A(\kappa, \kappa^+) : \tau(y) = y \}$ is large, in a sense to be made precise
  in Claim \ref{swedishchef} below.

  If $y \in S(\kappa, \kappa^+)$, then we let
 \[
     \tau(y) = {\langle \tau(y(0)) \rangle}^\frown \langle y(\alpha): 0 < \alpha < \lh(y)  \rangle.
\]

\begin{claim} \label{swedishchef}

Suppose that $v$ is a $(\kappa, \kappa^+)$-measure sequence  which has a generating embedding $j$.
 Then $\{ y \in S(\kappa, \kappa^+) : \tau(y) = y \}$ is $v$-large.

\end{claim}

\begin{proof}

   We need to show that $j(\tau)(u_j \restriction \alpha) = u_j \restriction \alpha$,
   which is immediate because $u_j(0) = j[\kappa^+]$ and this set is closed under
  $j(\tau)$.

\end{proof}

We now complete the proof of Lemma \ref{permutationfact}. Thus let $p, q \in {\mathbb R}^{\rm sup}_u$ be such that
  $\phi(p)$ and $\phi(q)$ are compatible in the $\le^*$ ordering.

 Then $p$ and $q$ have the same length., say $n$. Let
\[
     p = \langle  (u^0, A^0), \ldots, (u^i, A^i), \ldots, (u^n = u, A^n) \rangle
\]
 and
\[
     q = \langle  (v^0, B^0), \ldots, (v^i, B^i), \ldots, (v^n = u, B^n) \rangle.
\]

 By our assumption  $\phi(u^i) = \phi(v^i) = w^i$ say. Let  $\kappa_{u^i} = \kappa_{v^i} = \kappa_i$ say,
  and note that $\ot(u^i(0)) = \ot(v^i(0)) = \kappa_i^+$, $\lh(u^i) = \lh(v^i) = \lh(w^i)$,
    and $u^i(\alpha) = v^i(\alpha) = w^i(\alpha)$ for all $\alpha > 0$.
   Note also that $u^0(0) \subseteq u^1(0) \ldots  \subseteq u^{n-1}(0)$,
   and similarly $v^0(0) \subseteq v^1(0) \ldots  \subseteq v^{n-1}(0)$.

   We may now easily build a permutation $\tau$ of $\kappa_u^+$ such that $\tau \restriction \kappa_u = id$, and
   $\tau(u^i(0)) = v^i(0)$ for all $i$ with $0 \le i < n$. Note that $\tau(u^i) = v^i$ for each $i$.

Note that for each $i < n$, $\tau$ induces a permutation $\tau_i$ of $\kappa_i^+$ defined by
 $\tau_i = \pi_{v^i} \circ \tau \circ \rho_{u^i}$. By convention set $\tau_n = \tau$.
For each $i$ with $0 \le i \le n$, let
\[
    C^i = A^i \cap B^i \cap \{ y \in S(\kappa_i, \kappa_i^+) : \tau_i(y) = y \}.
\]
 By Claim \ref{swedishchef} and the remarks above, $C^i$ is both $u^i$-large and $v^i$-large.

Now let
\[
     p^* = \langle  (u^0, C^0), \ldots, (u^i, C^i), \ldots, (u^n = u, C^n) \rangle
\]
 and
\[
     q^* = \langle  (v^0, C^0), \ldots, (v^i, C^i), \ldots, (v^n = u, C^n) \rangle.
\]
   We will define a function $\alpha$ with domain ${\mathbb R}^{\rm sup}_u \downarrow p^*$
   as follows: if
\[
   r = \langle ({\bar u}^0, D^0), \ldots, ({\bar u}^j, D^j), \ldots, ({\bar u}^t = u, C^t) \rangle \le p^*,
\]
   then $\alpha(r)$ is the sequence obtained by replacing ${\bar u}^j$ by $\tau({\bar u}^j)$ for each $j$ with
   $0 \le j < t$.

   We will verify that $\alpha(r) \in {\mathbb R}^{\rm sup}_u \downarrow q^*$, and $\alpha$ is an isomorphism between
  ${\mathbb R}^{\rm sup}_u \downarrow p^*$ and ${\mathbb R}^{\rm sup}_u \downarrow q^*$.

\begin{claim} \label{bork} $\alpha(r)$ is a condition.
\end{claim}

\begin{proof}

 Clearly each
  $D^j$ is $\tau({\bar u}^j)$-large, because the measures of $\tau({\bar u}^j)$ are the same as those
  of ${\bar u}^j$. We need to check that $\tau({\bar u}^j) \in S(\kappa_{{\bar u}^j}, \kappa_{{\bar u}^j}^+)$
  and that the sequences $\tau({\bar u}^j)$ are increasing in the $\prec$-ordering.

 Note that each of the sequences $u^i$ appears as ${\bar u}^j$ for some $j$, and in this case
 $\tau({\bar u}^j) = \tau(u^i) = v^i$.

\begin{subclaim} \label{bork1}   If $u^{n-1} \prec y$ and $y \in C^n$, then
   $v^{n-1} \prec \tau(y) = y$. Moreover if $u^{n-1} \prec y_0 \prec y_1$ and $y_0, y_1 \in C^n$, then
   $\tau(y_0) \prec \tau(y_1)$.
\end{subclaim}

\begin{proof} As $y \in C^n$, $\tau(y) = y$ and so $v^{n-1}(0) = \tau(u^{n-1}(0)) \subseteq \tau(y)(0) = y(0)$,
  hence easily $v^{n-1} \prec y$. The second part is immediate since $\tau(y_i) = y_i$.
\end{proof}

\begin{subclaim} \label{bork2}  For all $i < n$, if $u^{i-1} \prec y \prec u^i$ and $\pi_{u^i}(y) \in C^i$, then
    $v^{i-1} \prec \tau(y) = \rho_{v^i} (\pi_{u^i}(y)) \prec v^i$ and in particular
    $\tau(y) \in S(\kappa_y, \kappa_y^+)$.
  Moreover if $u^{i-1} \prec y_0 \prec y_1 \prec u^i$ and $\pi_{u^i}(y_0), \pi_{u^i}(y_0) \in C^i$,
  then $\tau(y_0) \prec \tau(y_1)$.
\end{subclaim}

\begin{proof}  Since $\pi_{u_i}(y) \in C^i$ it is fixed by $\tau_i$, hence $\pi_{v_i}( \tau(y) ) = \pi_{u_i}(y)$.
  Also $u^{i-1} \prec y \prec u^i$ and so $v^{i-1}(0) = \tau(u^{i-1}(0)) \subseteq \tau(y)(0) \subseteq v^i(0) = \tau(u^i(0))$.
  Since the type-changing maps are order preserving, $\ot(\tau(y)) = \ot(y) = \kappa_y^+$ and
  hence  $\tau(y) \in S(\kappa_y, \kappa_y^+)$. The final part is routine.
\end{proof}

   Combining the results of subclaims \ref{bork1} and \ref{bork2}, we see that $\alpha(r)$ is a condition.
  In the course of proving the subclaims we obtained a description of the   of the map $\alpha$
    in terms of the type-changing maps,  which we will use freely below.
\end{proof}

\begin{claim} $\alpha(r) \le q^*$.
\end{claim}

\begin{proof}
  As we already mentioned each of the sequences $v^i$ appears as $\tau({\bar u}^j)$ for some
  $j$.   By Definition \ref{sccradinextn},  if ${\bar u}^j$ is not among the sequences of the form
  $u^i$ then  either ${\bar u}^j \in C^n$,
 or  $\pi_{u_i}({\bar u}^j) \in C^i$ where $u^{i-1} \prec {\bar u}^j \prec u^i$.
 In the former case we have  that $\tau({\bar u}^j) = {\bar u}^j \in C^n$,
 and in the latter case
 we have $\pi_{v_i}(\tau({\bar u}^j)) = \pi_{u_i}({\bar u}^j) \in C^i$.

 To finish the proof that $\alpha(r) \le q^*$, we must check that the sets
 $D^j$ behave correctly with respect to the sets $C^i$ and the type-changing maps.
 If ${\bar u}^j = u^i$ then $D^j \subseteq C^i$ because $r \le p^*$. Otherwise
 we distinguish as before the cases $u^{n-1} \prec {\bar u}^j \in C^n$
 and $\pi_{u_i}({\bar u}^j) \in C^i$ where $u^{i-1} \prec {\bar u}^j \prec u^i$.

 In the former case $\rho_{ {\bar u}^j }[D^j] \subseteq C^n$ by clause \ref{sccext2a} of
 Definition \ref{sccradinextn} for $r \le p^*$.
 The latter case is slightly more complicated because of the type changing.
 By clause \ref{sccext2b} of  Definition \ref{sccradinextn} for $r \le p^*$,
  $\sigma_{ {\bar u}^j u^i }[D^j] \subseteq C^i$. We have that
 $\tau( {\bar u}^j ) = \rho_{v^i} (\pi_{u^i}( {\bar u}^j  ))$, and
 since $v^i =  \rho_{v^i} (\pi_{u^i}( u^i ) ) $ and the type changing maps are order-preserving
 we see that $\sigma_{ {\bar u}^j u^i } = \sigma_{ \tau( {\bar u}^j ) v^i }$. It follows that
 $\sigma_{ \tau( {\bar u}^j ) v^i }[D^j] \subseteq C^i$ as required.

\end{proof}

 It is clear that $\alpha$ is bijective, with an inverse defined in the same way
 using the permutation $\tau^{-1}$. To finish the proof we must check that
 $\alpha$ is order-preserving.  Let $r \le p^*$ be a condition as above, and let
 $s = \langle ({\hat u}^0, E^0), \ldots, ({\hat u}^k, E^k), \ldots, ({\hat u}^{\hat t} = u, C^{\hat t}) \rangle \le r$,

\begin{claim} $\alpha(s) \le \alpha(r)$.
\end{claim}

\begin{proof}
 If  ${\hat u}^k$ appears among the sequences of form ${\bar u}^j$ or if ${\bar u}^t \prec {\hat u}^k$,
 then there are no new technical points in checking Definition \ref{sccradinextn}  at the pair
  $( \tau({\hat u}^k), E^k)$. So we assume that neither of these cases holds, let $j$ be least
 such that ${\hat u}^k \prec {\bar u}^j$, and observe that there is no new technical issue
  if ${\bar u}^j$ is among the sequences of form $u^i$. This leaves us with the cases
  where $u^{n-1} \prec {\hat u}^k \prec {\bar u}^n$, and
  $u^{i-1} \prec {\hat u}^k \prec {\bar u}^j \prec u^i$ for some $i < n$.

  In the first case $\tau({\hat u}^k) = {\hat u}^k$ and $\tau({\bar u}^j) = {\bar u}^j$,
  so it is easy to verify clause \ref{sccext2a} at $\tau({\hat u}^k)$ in Definition \ref{sccradinextn},
  using the same clause at ${\hat u}^k$ from  Definition \ref{sccradinextn} for  $s \le r$.

  In the second case we have that
  $\pi_{ {\bar u}^j }( {\hat u}^k ) \in D^j$ and $\sigma_{ {\hat u}^k {\bar u}^j }[E^k] \subseteq D^j$.
   We also have
  that  $\tau({\hat u}^k) = \rho_{v_i}(\pi_{u_i}({\hat u}^k))$ and
  $\tau({\bar u}^j) = \rho_{v_i}(\pi_{u_i}({\bar u}^j))$,
  so that $\pi_{ \tau({\bar u}^j) }( \tau({\hat u}^k) ) = \pi_{ {\bar u}^j }( {\hat u}^k )$
  and   $\sigma_{{\hat u}^k  {\bar u}^j} = \sigma_{\tau({\hat u}^k) \tau({\bar u}^j)}$.
  Clause \ref{sccext2b} at $\tau({\hat u}^k)$ in Definition \ref{sccradinextn}
  now follows from  the same clause at ${\hat u}^k$ from  Definition \ref{sccradinextn} for  $s \le r$.
\end{proof}

This completes the proof of Lemma \ref{permutationfact}.
\end{proof}

\begin{corollary}(Weak homogeneity).
Suppose $p, q \in {\mathbb R}^{\rm sup}_{u}$ and $\phi(p)$ is compatible with $\phi(q)$ in the $\le^*$-ordering.
   If $p \Vdash \psi(\alpha, \vec{\gamma})$, where
  $\alpha, \vec{\gamma}$ are ordinals, then it is not the case that $q \Vdash \neg \psi(\alpha, \vec{\gamma})$.
\end{corollary}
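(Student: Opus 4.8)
The plan is to argue by contradiction. I would assume that $p \Vdash \psi(\alpha, \vec{\gamma})$ while $q \Vdash \neg\psi(\alpha, \vec{\gamma})$, and derive an absurdity. Since $\phi(p)$ and $\phi(q)$ are compatible in the $\le^*$-ordering, Lemma \ref{permutationfact} supplies direct extensions $p^* \le^* p$ and $q^* \le^* q$ together with an isomorphism $\Phi : {\mathbb R}^{\rm sup}_u \downarrow p^* \to {\mathbb R}^{\rm sup}_u \downarrow q^*$. As $p^* \le p$ and $q^* \le q$, the forcing statements persist downward, so $p^* \Vdash \psi(\alpha, \vec{\gamma})$ and $q^* \Vdash \neg\psi(\alpha, \vec{\gamma})$.

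Next I would invoke the general principle that an isomorphism of forcing posets transports the forcing relation. Concretely, $\Phi$ induces a bijection $\Phi^*$ on the corresponding classes of names satisfying $r \Vdash \theta(\tau_1, \dots, \tau_k)$ in ${\mathbb R}^{\rm sup}_u \downarrow p^*$ if and only if $\Phi(r) \Vdash \theta(\Phi^*(\tau_1), \dots, \Phi^*(\tau_k))$ in ${\mathbb R}^{\rm sup}_u \downarrow q^*$. The key point is that $\Phi^*$ fixes every canonical name $\check{x}$ for a ground-model object, since such names make no reference to the generic filter; in particular $\Phi^*$ fixes the names $\check{\alpha}$ and $\check{\gamma}_i$ for the ordinal parameters of $\psi$. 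This is exactly why the statement restricts $\psi$ to ordinal parameters: had the parameters been arbitrary names, $\Phi^*$ could move them and the argument would break down.

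Putting these together, from $p^* \Vdash \psi(\alpha, \vec{\gamma})$ I would conclude $q^* = \Phi(p^*) \Vdash \psi(\alpha, \vec{\gamma})$, contradicting $q^* \Vdash \neg\psi(\alpha, \vec{\gamma})$. The only step needing care, and the place I would expect to spend the most attention, is the claim that the \emph{specific} isomorphism produced in Lemma \ref{permutationfact} acts trivially on check-names. This follows from the general principle above, but it is reassuring to recall that that isomorphism is induced by a permutation $\tau$ of $\kappa_u^+$ fixing $\kappa_u$ and moving only the ``collapsing'' coordinates $\bar{u}^j(0)$, while leaving the measure-one sets untouched; it is therefore a genuine poset isomorphism, and hence respects all canonical ordinal names.
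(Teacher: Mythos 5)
Your proof is correct and is precisely the argument the paper intends: the corollary is stated without proof as an immediate consequence of Lemma \ref{permutationfact}, and your write-up just fills in the standard details (pass to the direct extensions $p^*, q^*$, transport the forcing relation through the cone isomorphism, and note that the induced map on names fixes the check-names of the ordinal parameters). No gaps; the emphasis on why ordinal parameters matter is exactly the right point of care.
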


It follows that:

\begin{corollary} \label{maincorollary}
Suppose that $G$ is ${\mathbb R}^{\rm sup}_{u}$-generic and let $G^{\phi}$ be the filter generated by $\phi[G]$.  Then:

\begin{enumerate}

\item \label{otherbit} $G^{\phi}$ is ${\mathbb R}^{\rm proj}_{\phi(u)}$-generic over $V$.

\item \label{mainbit} $HOD^{V[G]} \subseteq V[G^{\phi}]$.

\end{enumerate}

\end{corollary}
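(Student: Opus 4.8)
The plan is to handle the two parts separately; part \ref{otherbit} is a formality and part \ref{mainbit} carries all the content. For part \ref{otherbit} I would simply invoke Lemma \ref{projthm}: since $\phi \colon {\mathbb R}^{\rm sup}_u \rightarrow {\mathbb R}^{\rm proj}_{\phi(u)}$ is a weak projection and $G$ is ${\mathbb R}^{\rm sup}_u$-generic, the filter generated by $\phi[G]$, which is exactly $G^\phi$, is ${\mathbb R}^{\rm proj}_{\phi(u)}$-generic over $V$ by the general observation recorded just after Definition \ref{weakprojdef}.

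For part \ref{mainbit} I would first reduce to sets of ordinals: since $HOD^{V[G]}$ models $ZFC$ and carries a definable well-ordering, each of its elements is coded by a set of ordinals lying in $HOD^{V[G]}$, so it suffices to show that every set of ordinals $X \in HOD^{V[G]}$ lies in $V[G^\phi]$. Fix such an $X$, say $\xi \in X \iff V[G] \models \psi(\xi, \vec\gamma)$ with $\vec\gamma$ a tuple of ordinals. I would then define, inside $V[G^\phi]$, the set
\[
    Y = \{ \xi : \exists p \in {\mathbb R}^{\rm sup}_u \; ( \phi(p) \in G^\phi \ \wedge \ p \Vdash \psi(\check\xi, \check{\vec\gamma}) ) \}.
\]
This $Y$ is a genuine element of $V[G^\phi]$, since the forcing relation $\Vdash$ of ${\mathbb R}^{\rm sup}_u$ is definable in $V$, the map $\phi$ lies in $V$, and $G^\phi \in V[G^\phi]$ by part \ref{otherbit}. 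The claim is that $Y = X$.

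The inclusion $X \subseteq Y$ is immediate: if $\xi \in X$ then some $p \in G$ forces $\psi(\check\xi, \check{\vec\gamma})$, and $\phi(p) \in \phi[G] \subseteq G^\phi$, so $\xi \in Y$. The substantive inclusion is $Y \subseteq X$, which I would prove by contradiction. Suppose $\xi \in Y$, witnessed by $p$ with $\phi(p) \in G^\phi$ and $p \Vdash \psi(\check\xi, \check{\vec\gamma})$, but $\xi \notin X$, so that $V[G] \models \neg\psi(\xi, \vec\gamma)$. The goal is to manufacture a condition $q$ with $q \Vdash \neg\psi(\check\xi, \check{\vec\gamma})$ whose projection $\phi(q)$ is compatible with $\phi(p)$ in the \emph{direct-extension} ordering $\le^*$; the Weak Homogeneity corollary then forbids $p$ and $q$ from forcing $\psi$ and $\neg\psi$, a contradiction. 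To build $q$, I would use that $\phi(p) \in G^\phi$ is generated by $\phi[G]$ to pick $\hat r \in G$ with $\phi(\hat r) \le \phi(p)$, and extend $\hat r$ within $G$ to some $q_0 \in G$ forcing $\neg\psi(\check\xi, \check{\vec\gamma})$.

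The main obstacle is exactly the mismatch between the two compatibility notions: membership in the filter $G^\phi$ delivers only ordinary ($\le$) compatibility of $\phi(p)$ and $\phi(q_0)$, whereas the Weak Homogeneity corollary requires $\le^*$-compatibility, i.e. that $\phi(p)$ and $\phi(q)$ share a common stem. Bridging this gap is where the weak-projection machinery is essential. I would use the strong form of Lemma \ref{projthm}, together with Claim \ref{bigbird} (which guarantees that a projected interpolation can be realised by a genuine $\prec$-increasing interpolation in ${\mathbb R}^{\rm sup}_u$), to lengthen the stem of $p$ to a condition $p' \le p$, still forcing $\psi(\check\xi, \check{\vec\gamma})$, whose projected stem agrees with that of $\phi(q_0)$; the Prikry lemma (Lemma \ref{sccprikrylemma}) would be invoked so that the relevant decisions are made by direct extensions and the stems can be controlled. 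Once $\phi(p')$ and $\phi(q_0)$ share a stem they are $\le^*$-compatible, and Weak Homogeneity gives the contradiction. Checking that this stem-alignment can always be performed while preserving the forcing of $\psi$ and of $\neg\psi$ is the delicate heart of the argument.
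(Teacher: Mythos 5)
Part \ref{otherbit} of your proposal is exactly the paper's argument and is fine. For part \ref{mainbit}, your reduction to sets of ordinals and your overall shape (exhibit a set definable in $V[G^{\phi}]$ that provably equals $X$, using the weak homogeneity corollary to rule out disagreement) also match the paper, but your choice of auxiliary set opens a gap that your sketch does not close, and the tools you cite cannot close it. You define $Y$ by quantifying over all $p$ with $\phi(p) \in G^{\phi}$, so for $Y \subseteq X$ you must take such a $p$ forcing $\psi$, a condition $q_0 \in G$ forcing $\neg\psi$, and manufacture extensions whose projections are $\le^*$-compatible. You propose to do the stem alignment via the strong form of Lemma \ref{projthm} together with Claim \ref{bigbird} and the Prikry lemma. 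This fails: Lemma \ref{projthm} only lifts conditions $t \le \phi(p^*)$, where $p^*$ is one particular \emph{direct extension} of $p$ whose upper parts have been refined to $A^{(\omega)}$; what you actually have is $\phi(q_0) \le \phi(\hat r) \le \phi(p)$, and nothing puts $\phi(q_0)$ (or anything in $G^{\phi}$) below $\phi(p^*)$. Genericity cannot repair this: a projected condition whose stem contains an entry of $\phi(A) \setminus \phi(A^{(\omega)})$ is compatible with $\phi(p)$, yet no extension of it can ever get below $\phi(p^*)$, since stems cannot be shrunk; so the set of conditions that either extend $\phi(p^*)$ or are incompatible with $\phi(p)$ is not dense, and $G^{\phi}$ may contain $\phi(p)$ while never deciding in favour of $\phi(p^*)$. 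The obstruction is not bookkeeping but the heart of the matter: to align stems you must interpolate into $p$ new entries drawn from $p$'s \emph{own} upper part $A$ via the type-changing maps, which must be $\prec$-increasing (hence nested as subsets of $\kappa^+$) and must project to the prescribed $\phi(w_i)$'s; for an arbitrary $u$-large set $A$ such a nested lifting of a projected stem need not exist at all --- that is precisely why Claim \ref{bigbird} has to be iterated to produce $A^{(\omega)}$ in the proof of Lemma \ref{projthm}, and that refinement is available only below $p^*$. For the same reason it is not even clear that your set $Y$ equals $X$.

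The paper sidesteps this by never mentioning membership in $G^{\phi}$ in its auxiliary set. Writing $\langle w_i : i < \mu \rangle$ for the generic sequence, it defines $H$ to be the set of conditions $q$ such that for every finite $b \subseteq \mu$ containing $\{ i : \mbox{$\phi(w_i)$ appears in $\phi(q)$} \}$ there is $r \le q$ whose projected stem consists exactly of $\{ \phi(w_i) : i \in b \}$. This $H$ lies in $V[G^{\phi}]$, because its definition needs only the projected generic sequence $\langle \phi(w_i) : i < \mu \rangle$ and objects of $V$; it contains $G$, because conditions in $G$ have stems made of genuine generic entries and can be extended inside $G$ to realise any finite $b$; and, crucially, membership in $H$ hands you by definition exactly the stem alignment that weak homogeneity needs: given $q \in H$ forcing $\psi$ and $p \in G$ forcing $\neg\psi$, let $b$ be the union of their index sets, extend $q$ using the $H$-property and extend $p$ inside $G$ so that both projections have stem $\{ \phi(w_i) : i \in b \}$, and the weak homogeneity corollary yields the contradiction. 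If you want to salvage your write-up, keep everything else but replace your $Y$ by $\{ \xi : \exists q \in H \; q \Vdash \psi(\check\xi, \check{\vec\gamma}) \}$.
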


\begin{proof}

Part \ref{otherbit} is immediate because $\phi$ is a weak projection.

For part \ref{mainbit} suppose that $a$ is a set of ordinals in $V[G]$ which is definable  in
 $V[G]$ with ordinal parameters. We show that $a$ belongs to $V[G^{\phi}]$.
    Write $a=\{ \alpha : V[G] \models \psi(\alpha, \vec{\gamma}) \}$. Then
\[
    a  = \{ \alpha: \mbox{$p \Vdash \psi(\alpha, \vec{\gamma})$ for some $p\in G$} \}.
\]

   Let $\langle w_i: i < \mu \rangle$ be the generic sequence  induced by $G$, so that
   $\langle \phi(w_i) : i < \mu \rangle$ is the generic sequence induced by $G^\phi$. Let
   $H$ be the set of conditions $q$ in ${\mathbb R}^{\rm sup}_u$ such that for every finite set
   $b \subseteq \mu$ with $\{ i < \mu : \mbox{$\phi(w_i)$ appears in $\phi(q)$} \} \subseteq b$, there
   is $r \le q$ such that $\{ i < \mu : \mbox{$\phi(w_i)$ appears in $\phi(r)$} \} =  b$.
   Clearly $G \subseteq H$ and $H \in V[G^{\phi}]$.

   We claim that
\[
     a = \{\alpha:  \mbox{$p \Vdash \phi(\alpha, \vec{\gamma})$ for some $p \in H$ }   \}.
\]
   Clearly if $\alpha \in a$ then it is a member of the set on the right hand side, so suppose for a contradiction that
   there exist $p$ and $q$ such that $p \in G$, $q \in H$, $q \Vdash \phi(\alpha, \vec{\gamma})$
   and $p \Vdash \neg \phi(\alpha, \vec{\gamma})$. Let $b = \{ i < \mu : \mbox{$w_i$ appears in $p$ or $\phi(w_i)$ appears in $q$} \}$.
   Since $q \in H$ we may  find $q' \le q$ such that $b = \{ i < \mu : \mbox{$\phi(w_i)$ appears in $\phi(q')$} \}$,
   and since $p \in G$ we may find $p' \le p$ such that $b = \{ i < \mu : \mbox{$w_i$ appears in $p'$} \}$.

   It follows that $\phi(p')$ and $\phi(q')$ are compatible in the $\le^*$ ordering,
   contradicting Corollary \ref{maincorollary}.
\end{proof}

\section{The main theorem}

\begin{theorem} \label{mainthm}
 Let  GCH holds and let $\kappa$ be  $\kappa^{+3}$-supercompact. Then there exists a generic extension
  $W$ of $V$ in which $\kappa$ remains strongly inaccessible and $(\alpha^+)^{HOD} < \alpha^+$ for every infinite
  cardinal $\alpha < \kappa$. In particular the rank-initial segment $W_\kappa$ is a model of ZFC in which
  $(\alpha^+)^{HOD} < \alpha^+$ for {\em every} infinite cardinal $\alpha$.
\end{theorem}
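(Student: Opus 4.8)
The plan is to run the three-stage construction sketched in the introduction and then verify the cardinal arithmetic. First I would fix, via Lemma~\ref{gettingulemma}, a good $(\kappa,\kappa^+)$-measure sequence $u = v\restriction\alpha$, where $v$ is built from an embedding $j$ witnessing $\kappa^{+3}$-supercompactness and $\alpha$ is a weak repeat point; since $\alpha\geq\kappa$, Lemma~\ref{sccportmanteau}(2) gives that the generic sequence has length $\mu=\kappa$. Working below a condition forcing $\lambda_w = \kappa_w^+$ for every $w$ on the sequence, let $G$ be ${\mathbb R}^{\rm sup}_u$-generic and set $V^* = V[G]$ and $V_0^* = V[G^\phi]$, where $G^\phi$ is generated by $\phi[G]$. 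Applying Lemma~\ref{sccpres} with $\mu = \kappa^{++}$ keeps $\kappa$ measurable (hence inaccessible) in $V^*$; Lemma~\ref{sccportmanteau} supplies the sequence $\langle w_\alpha:\alpha<\kappa\rangle$, the club $D = \{\kappa_{w_\alpha}:\alpha<\kappa\}$, and the dichotomy that $\kappa_{w_\beta}$ is singular with $(\kappa_{w_\beta}^+)^V$ collapsed exactly at limit $\beta$, while $\kappa_{w_\alpha}$ stays inaccessible at successor $\alpha$; and Corollary~\ref{maincorollary} with Lemma~\ref{sccprojportmanteau}(5) gives $HOD^{V^*}\subseteq V_0^*$ with $V_0^*$ preserving all $V$-cardinals, so $(\gamma^+)^{V_0^*}=(\gamma^+)^V$ for every $\gamma$.

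Next I would define the final forcing $\mathbb L$ as a reverse-Easton iteration over $V^*$ of Levy collapses filling the gaps of $D$, always collapsing to a target that is regular in $V^*$: below the first point of $D$ collapse to $\omega$; at a successor index $\alpha$ force $Coll(\kappa_{w_\alpha},<\kappa_{w_{\alpha+1}})$; and at a limit index $\alpha$ force $Coll((\kappa_{w_\alpha}^+)^{V^*},<\kappa_{w_{\alpha+1}})$, leaving the singular cardinal $\kappa_{w_\alpha}$ itself untouched. The reason the limit case avoids a singular target is precisely that the supercompact Radin forcing has already pushed the successor of the singular $\kappa_{w_\beta}$ above $(\kappa_{w_\beta}^+)^V$; this is where the Radin stage is indispensable, since a mere Levy collapse cannot move the successor of a singular cardinal. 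Because $D\in V_0^*$ and $V_0^*$ computes the targets $\kappa_{w_\alpha}$ and $(\kappa_{w_\alpha}^+)^{V^*}=(\kappa_{w_\alpha}^{++})^V$ correctly (by cardinal preservation), $\mathbb L$ lies in $V_0^*$. Letting $H$ be $\mathbb L$-generic and $W=V^*[H]$, a direct reading of the collapse shows the $W$-cardinals below $\kappa$ are $D$ together with the extra regular points $(\kappa_{w_\alpha}^+)^{V^*}$ at limit $\alpha$, and that each such cardinal $\beta$ has a $V$-cardinal strictly between $\beta$ and $(\beta^+)^W$; hence $(\beta^+)^V<(\beta^+)^W$ for every $W$-cardinal $\beta<\kappa$.

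For the $HOD$ computation I would view $W=V_0^*[K]$, where $K$ is generic for the two-step poset ${\mathbb R}^{\rm sup}_u/G^\phi * \dot{\mathbb L}$, which is definable over $V_0^*$ from the parameter $G^\phi$. The crux is that this composite is weakly homogeneous over $V_0^*$: the permutations $\tau$ of $\kappa^+$ from Lemma~\ref{permutationfact} fix every $\kappa_{w_\alpha}=w_\alpha(0)\cap\kappa$, hence fix $D$ and therefore the factor $\mathbb L$, so the cone-isomorphism argument of Lemma~\ref{permutationfact}, combined with the canonical homogeneity of the Levy collapses, extends to the composite. Consequently any set of ordinals ordinal-definable in $W$ is definable over $V_0^*$ from $G^\phi$ and ordinals, and so lies in $V_0^*$ by Separation; thus $HOD^W\subseteq V_0^*$, which refines the introduction's claim $HOD^W\subseteq HOD^{V^*}$. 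Combining with the previous paragraph, for every infinite cardinal $\beta<\kappa$ we obtain $(\beta^+)^{HOD^W}\le(\beta^+)^{V_0^*}=(\beta^+)^V<(\beta^+)^W$.

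Finally, since $\mathbb L$ is an Easton-support iteration of length $\kappa$ with stages of size $<\kappa$ over the measurable (hence inaccessible) $\kappa$, it is $\kappa$-cc and preserves the inaccessibility of $\kappa$, so $\kappa$ remains strongly inaccessible in $W$ and $W_\kappa=(V_\kappa)^W\models ZFC$. As every set ordinal-definable in $W_\kappa$ is ordinal-definable in $W$ we have $HOD^{W_\kappa}\subseteq HOD^W$, whence $(\beta^+)^{HOD^{W_\kappa}}\le(\beta^+)^{HOD^W}<(\beta^+)^W=(\beta^+)^{W_\kappa}$ for every infinite cardinal $\beta$. I expect the main obstacle to be the third paragraph: arranging $\mathbb L$ so that every collapse target is genuinely regular in the relevant intermediate model (which forces the successor/limit split and relies essentially on the Radin singularization of the $\kappa_{w_\beta}$), while simultaneously keeping the entire composite forcing weakly homogeneous over $V_0^*$ so that the conclusion $HOD^W\subseteq V_0^*$ survives the addition of the Levy collapse.
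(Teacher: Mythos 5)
Your first two paragraphs track the paper's own plan (Radin stage, projected model $V_0^*=V[G^\phi]$, then a gap-collapsing iteration), and your variant collapse $\mathbb L$ does yield the required cardinal pattern: between every $W$-cardinal below $\kappa$ and its $W$-successor there is a $V$-cardinal, which is all the final computation needs. Two small inaccuracies there: at a limit index $\beta$ where $\lh(w_\beta)\ge\kappa_{w_\beta}$ the cardinal $\kappa_{w_\beta}$ can remain inaccessible in $V^*$, so the ``singular at limits'' dichotomy is not literally what Lemma \ref{sccportmanteau} gives (what you actually use — that $(\kappa_{w_\beta}^+)^V$ is collapsed at every limit index and that your targets are regular — is correct); and $\mathbb L$ is \emph{not} an element of $V_0^*$, since the Levy collapses computed in $V^*$ contain conditions coding bounded sets that are not in $V_0^*$; only its \emph{definition} from $D$ and the cardinal targets lies in $V_0^*$.

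The genuine gap is your third paragraph, which is the crux of the theorem. You assert that the composite $({\mathbb R}^{\rm sup}_u/G^\phi)*\dot{\mathbb L}$ is weakly homogeneous over $V_0^*$ because ``the cone-isomorphism argument of Lemma \ref{permutationfact} extends to the composite.'' Lemma \ref{permutationfact} does not deliver this: its hypothesis is that $\phi(p)$ and $\phi(q)$ are compatible in the \emph{direct extension} ordering $\le^*$, which for conditions projecting into $G^\phi$ means they have essentially the same stem. Two arbitrary conditions of the quotient only have projections that both lie in $G^\phi$; they typically have different stems, are not $\le^*$-compatible, and the lemma gives no cone isomorphism for them. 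Bridging exactly this mismatch is the nontrivial content of the paper's proof of Corollary \ref{maincorollary} (the auxiliary set $H\in V[G^\phi]$ of conditions and the matching of finite index sets $b$); it is not a formal consequence of Lemma \ref{permutationfact}, and in your approach it would have to be redone for the composite, together with a verification that the induced maps act on the quotient Boolean algebra and on the name $\dot{\mathbb L}$ in a way visible to $V_0^*$ (nontrivial, given that $\mathbb L\notin V_0^*$). The paper avoids all of this by splitting the computation: the collapse iteration is cone homogeneous by Dobrinen--Friedman and is handled \emph{over} $V^*$, giving $HOD^{W}\subseteq HOD^{V^*}$ (with the harmless parameter $D$, which is definable from the projected generic sequence), and this is then composed with $HOD^{V^*}\subseteq V_0^*$ from Corollary \ref{maincorollary} — a result you already quoted in your first paragraph. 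With that replacement your argument closes; as written, the homogeneity claim for the quotient-times-collapse is an unproved assertion standing precisely where the paper does its hardest work.
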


\begin{proof}
Let $j: V \rightarrow M$ witness that $\kappa$ is a $\kappa^{+3}$-supercompact cardinal.
 Let $v\in {\mathcal U}^{\rm sup}_\infty$ be a $(\kappa, \kappa^+)$-measure sequence constructed from $j$ which has a weak repeat point
 $\alpha$ and let $u = v \restriction \alpha$.

Consider the forcing notions ${\mathbb R}^{\rm sup}_u$ and ${\mathbb R}^{\rm proj}_{\phi(u)}$.
 Let $G$ be ${\mathbb R}^{\rm sup}_{u}$-generic over $V$ and let $G^{\phi}$ be  the induced generic filter for
 ${\mathbb R}^{\rm proj}_{\phi(u)}$.  Let $\mu = \kappa^{+3}$. From previous results:

\begin{itemize}

\item $\kappa$ remains $\mu$-supercompact in $V[G]$ and in $V[G^{\phi}]$ (by Lemmas \ref{sccpres} and \ref{newsccpres}).

\item There exists $D \in V[G^{\phi}]$ a club subset of $\kappa$ such that for every limit
  point $\alpha$ of $D$,
   we have $(\alpha^+)^{V[G^{\phi}]} = (\alpha^+)^V < (\alpha^+)^{V[G]}$ (by Lemmas \ref{sccportmanteau} and \ref{sccprojportmanteau}).

\end{itemize}

 By part \ref{mainbit} of Corollary \ref{maincorollary} we have $HOD^{V[G]} \subseteq V[G^{\phi}]$, in particular for every limit point
 $\alpha$ of $D$ we have
\[
  (\alpha^+)^{HOD^{V[G]}} \le (\alpha^+)^{V[G^{\phi}]} =(\alpha^+)^{V} < (\alpha^+)^{V[G]}.
\]

Let $\langle \kappa_i: i< \kappa   \rangle$ be an increasing enumeration of $D$. Working in $V[G]$,
  let ${\mathbb Q}$ be the reverse Easton iteration for collapsing $\kappa_{i+1}$ to have cardinality $\kappa_{i}^+$
  for each $i<\kappa$, and let $H$ be $\mathbb{Q}$-generic over $V[G]$. By standard arguments about iterated forcing:

\begin{enumerate}

\item  $CARD^{V[G*H]} \cap (\kappa_0, \kappa) = \{ \kappa_{i}^+ : i<\kappa\} \cup \{ \kappa_i :\mbox{$i < \kappa$, $i$ is a limit ordinal}  \}$.

\item $\kappa$ remains inaccessible in $V[G*H]$.

\end{enumerate}

It also follows from  results of Dobrinen and Friedman \cite{dobrinen-friedman} that  $\mathbb{Q}$ is
 {\em cone homogeneous}, that is for all $p, q \in \mathbb{Q}$ there are $p^* \le p$, $q^* \le q$ and an isomorphism
 $\alpha: \mathbb{Q}/p^*  \rightarrow \mathbb{Q}/q^*$. Hence by \cite{dobrinen-friedman} we have
\[
    HOD^{V[G*H]} \subseteq HOD^{V[G]}.
\]
Finally let ${\mathbb P}=Col(\aleph_0, \kappa_0)^{V[G*H]}$ over $V[G*H]$, and let $K$ be  ${\mathbb P}$-generic over $V[G*H]$.
 It is now easily seen that $\kappa$ remains inaccessible in $V[G*H*K]$, and by homogeneity of $\mathbb{P}$
\[
   HOD^{V[G*H*K]} \subseteq HOD^{V[G*H]}.
\]
Hence
\[
    HOD^{V[G*H*K]} \subseteq HOD^{V[G*H]} \subseteq HOD^{V[G]} \subseteq V[G^{\phi}].
\]
Thus for all infinite cardinals $\alpha <\kappa$ of $V[G*H*K]$ we have
\[
    (\alpha^+)^{HOD^{V[G*H*K]}} \le (\alpha^+)^{V[G^{\phi}]} = (\alpha^+)^{V} < (\alpha^+)^{V[G*H*K]}.
\]
Let $W = V[G*H*K]$. Then $W$ is the required model and the theorem follows.
\end{proof}

\begin{remark}
   If we start with a cardinal $\kappa$ which is supercompact, then we may find $u$ such that
  $\kappa$ remains supercompact in the generic extension by ${\mathbb R}^{\rm sup}_u$.
  This gives a model where $\kappa$ is supercompact and $(\alpha^+)^{HOD} < \alpha^+$ for
  club-many $\alpha < \kappa$. We note that Dobrinen and Friedman \cite{dobrinen-friedman}
  gave a model where $\kappa$ is measurable with some normal measure $U$, and $(\alpha^+)^{HOD} < \alpha^+$ for
  $U$-many $\alpha < \kappa$.

  To preserve supercompactness we argue as follows.  We choose for each $\mu \ge \kappa^{+3}$ an embedding witnessing
  that $\kappa$ is $\mu$-supercompact, appeal to Lemmas \ref{gettingulemma} and \ref{sccpres}
  to find a $(\kappa, \kappa^+)$-measure sequence $u$ (depending on $\mu$) such that $\lh(u) < \kappa^{+++}$
  and ${\mathbb R}^{\rm sup}_u$ preserves the $\mu$-supercompactness of $\kappa$, and finally use the
  Axiom of Replacement to find a sequence $u$ such that  ${\mathbb R}^{\rm sup}_u$ preserves the $\mu$-supercompactness of $\kappa$
  for unboundedly many values of $\mu$.

\end{remark}

\begin{remark}

  We can show that $\kappa$ is measurable  in $W$ in Theorem \ref{mainthm}.
  Since $K$ is generic for small forcing, it suffices to show that
  $\kappa$ is measurable in $V[G * H]$.

  To do this let $W \in V[G]$ be a normal measure on $\kappa$, and let
  $i : V[G] \rightarrow N$ be the associated ultrapower map.
  Clearly $\kappa$ is a limit point of the club set $i(\mathbb D)$.
  By standard facts about Easton iterations the poset $\mathbb Q$ is
  $\kappa$-c.c.~and we may write $i(\mathbb Q) = {\mathbb Q} * {\dot{\mathbb R}}$ where
  $\mathbb R$ is the tail of the iteration. In $N[H]$ the first step of the  iteration $\mathbb R$
  is a $\kappa^+$-closed Levy collapse, and by standard arguments $\mathbb R$ is $\kappa^+$-closed.

  Since GCH holds in $V[G]$, we have that $2^\kappa = \kappa^+ = \vert i(\kappa) \vert$ in $V[G]$.
  In $N[H]$ the iteration $\mathbb R$ is a $i(\kappa)$-c.c.~poset of cardinality
  $i(\kappa)$ and so
\[
   N[H] \models \mbox{ $\mathbb R$ has $i(\kappa)$ maximal antichains.}
\]
  Since $V[G] \models {}^\kappa N \subseteq N$ and $H$ is generic for $\kappa$-c.c.~forcing,
  $V[G * H] \models {}^\kappa N[H] \subseteq N[H]$. It follows that working in $V[G * H]$ we may
  enumerate the antichains of $\mathbb R$ which lie in $N[H]$ in order type $\kappa^+$, and
  build a decreasing $\kappa^+$-chain of conditions in $\mathbb R$ that meets each of these antichains.
  This allows us to construct a filter $H^* \in V[G * H]$ which is $\mathbb R$-generic over $N[H]$.

  Since $\mathbb Q$ is an iteration with supports bounded in $\kappa$, $i`` H \subseteq H * H^*$.
  It follows that we may lift $i$ to obtain $i: V[G * H] \rightarrow N[H][H^*]$ definable
  in $V[G * H]$, and hence that $\kappa$ is measurable in $V[G * H]$.

\end{remark}

We conclude with some open questions:

\noindent

\begin{itemize}

\item   What is the exact consistency strength of the assertion
``$(\alpha^+)^{HOD} < \alpha^+$ for  every infinite cardinal $\alpha$''?

\item What is $HOD^{V[G]}$?

\item Is it consistent that all uncountable regular cardinals are inaccessible cardinals of $HOD$?

\item  Is it consistent that $\kappa$ is supercompact
  and $(\alpha^+)^{HOD} < \alpha^+$ for all cardinals $\alpha<\kappa$?

\end{itemize}

As a consequence of his ``HOD Conjecture'' (see \cite{woodin}), Woodin
has conjectured a negative answer to the last of these questions.

Department of Mathematical Sciences,
Carnegie Mellon University,
Pittsburgh PA 15213-3890,
USA.

E-mail address: jcumming@andrew.cmu.edu

Kurt G\"{o}del Research Center for Mathematical Logic,
W\"{a}hringer Strasse 25, 1090 Vienna, Austria.

E-mail address: sdf@logic.univie.ac.at

School of Mathematics, Institute for Research in Fundamental Sciences (IPM), P.O. Box:
19395-5746, Tehran-Iran.

E-mail address: golshani.m@gmail.com

\end{document}